\documentclass[11pt]{article}

\usepackage[hmargin={24mm,24mm},vmargin={30mm,35mm}]{geometry}
\usepackage{amsfonts}
\usepackage{amssymb}
\usepackage{amsthm}
\usepackage{bm}
\usepackage{mathtools}
\usepackage{latexsym}
\usepackage{graphicx}
\usepackage{booktabs}
\usepackage{pgfplots,pgfplotstable}
\usepackage{xcolor}
\usepackage{setspace}
\usepackage{url}
\usepackage{float}
\usepackage{subcaption}
\usepackage{enumitem}
\usepackage{cancel}

\usepackage[colorlinks,allcolors={blue},linkcolor={black}]{hyperref}

\usepackage[backend=bibtex,style=numeric-comp,maxnames=99,firstinits=true]{biblatex}

\addbibresource{references.bib}

\numberwithin{equation}{section}


\usepackage[normalem]{ulem}
\normalem
\newcounter{corr}
\definecolor{violet}{rgb}{0.580,0.,0.827}
\newcommand{\corr}[4][]{\typeout{Warning : a correction remains in page \thepage}
 \stepcounter{corr}
	      {\color{blue}\ifmmode\text{\,\sout{\ensuremath{#2}}\,}\else\sout{#2}\fi}
        {\color{green!50!black}#3}
        {\color{violet}#4}
}


\usepackage{authblk}
\newcommand{\email}[1]{\href{mailto:#1}{#1}}


\newtheorem{theorem}{Theorem}

\newtheorem{proposition}[theorem]{Proposition}
\newtheorem{lemma}[theorem]{Lemma}
\newtheorem{corollary}[theorem]{Corollary}

\newtheorem{assumption}{Assumption}

\newtheorem{remark}{Remark}

\newcommand{\bmrm}[1]{{\bm{{\rm #1}}}}



\newcommand{\bmb}{{\bm{b}}}

\newcommand{\bme}{{\bm{e}}}
\newcommand{\bmf}{{\bm{f}}}
\newcommand{\bmg}{{\bm{g}}}

\newcommand{\bmn}{{\bm{n}}}

\newcommand{\bmu}{{\bm{u}}}
\newcommand{\bmv}{{\bm{v}}}
\newcommand{\bmw}{{\bm{w}}}

\newcommand{\bmz}{{\bm{z}}}

\newcommand{\bmB}{\bm{B}}
\newcommand{\bmC}{\bm{C}}

\newcommand{\bmG}{\bm{G}}

\newcommand{\bmI}{\bm{I}}

\newcommand{\bmU}{\bm{U}}

\newcommand{\bmX}{\bm{X}}

\newcommand{\bmdelta}{{\bm{\delta}}}

\newcommand{\bmkappa}{{\bm{\kappa}}}

\newcommand{\bmpi}{{\bm{\pi}}}

\newcommand{\bmtau}{{\bm{\tau}}}


\newcommand{\calE}{\mathcal{E}}
\newcommand{\calF}{\mathcal{F}}

\newcommand{\calH}{\mathcal{H}}

\newcommand{\calM}{\mathcal{M}}
\newcommand{\calN}{\mathcal{N}}

\newcommand{\calT}{\mathcal{T}}


\newcommand{\bbP}{\mathbb{P}}

\newcommand{\bbR}{\mathbb{R}}


\newcommand{\rma}{{\rm{a}}}
\newcommand{\rmb}{{\rm{b}}}

\newcommand{\rmd}{{\rm{d}}}

\newcommand{\rmi}{{\rm{i}}}

\newcommand{\rms}{{\rm{s}}}
\newcommand{\rmt}{{\rm{t}}}

\newcommand{\rmA}{{\rm{A}}}

\newcommand{\rmD}{{\rm{D}}}

\newcommand{\rmF}{{\rm{F}}}
\newcommand{\rmG}{{\rm{G}}}

\newcommand{\rmT}{{\rm{T}}}

\newcommand{\eq}{ ={}& }
\newcommand{\lea}{ \le{}& }

\newcommand{\les}{ \lesssim{}& }

\newcommand{\minus}{ &{}- }

\newcommand{\nn}{\nonumber}
\newcommand{\nl}{\nn\\}

\newcommand{\defeq}{\vcentcolon=}

\newcommand{\deq}{ \defeq & \, }



\newcommand{\ul}[1]{\underline{#1}}
\newcommand{\ol}[1]{\overline{#1}}
\newcommand{\ulbm}[1]{\ul{\bm{#1}}}
\newcommand{\olbm}[1]{\ol{\bm{#1}}}

\newcommand{\CARD}[1]{\mbox{\textrm{Card}}\big(#1\big)}

\newcommand{\bdry}{\partial}

\newcommand{\Mh}[1][h]{\calM_{#1}}
\newcommand{\Th}[1][h]{\calT_{#1}}
\newcommand{\Fh}[1][h]{\calF_{#1}}
\newcommand{\Fhb}{\Fh^{\rmb}}
\newcommand{\Fhi}{\Fh^{\rmi}}

\newcommand{\T}{{T}}
\newcommand{\F}{{F}}
\newcommand{\FT}{{\Fh[\T]}}

\newcommand{\hT}{h_\T}
\newcommand{\hF}{h_\F} 
\newcommand{\hX}{h_X}

\newcommand{\bdryT}{{\bdry \T}}



\newcommand{\nor}{\bmn}
\newcommand{\norT}{\nor_{\T}}
\newcommand{\norF}{\nor_{\F}}
\newcommand{\norTF}{\nor_{\T\F}}



\def\R{\bbR}

\newcommand{\POLY}[1]{\bbP^{#1}}
\newcommand{\HS}[1]{H^{#1}}
\newcommand{\HONE}{\HS{1}}
\newcommand{\HONEzr}{\HONE_0}
\newcommand{\WSP}[1]{W^{#1}}
\newcommand{\LP}[1]{L^{#1}}
\newcommand{\LTWO}{\LP{2}}



\DeclareMathOperator*{\DIV}{div}
\DeclareMathOperator*{\CURL}{curl}

\newcommand{\norm}[2][]{\|#2\|_{#1}}
\newcommand{\seminorm}[2][]{|#2|_{#1}}
\newcommand{\brac}[2][]{(#2)_{#1}}

\newcommand{\SqBrac}[2][]{\Big[#2\Big]_{#1}}

\newcommand{\Brac}[2][]{\Big(#2\Big)_{#1}}

\newcommand{\SEMINORM}[2][]{\left|#2\right|_{#1}}
\newcommand{\BRAC}[2][]{\left(#2\right)_{#1}}


\newcommand{\piT}[1]{\pi_T^{#1}}
\newcommand{\piTzr}[1]{\piT{0, #1}}

\newcommand{\pih}[1]{\pi_h^{#1}}
\newcommand{\pihzr}[1]{\pih{0, #1}}

\newcommand{\bmpiT}[1]{\bmpi_T^{#1}}
\newcommand{\bmpiTzr}[1]{\bmpiT{0, #1}}
\newcommand{\bmpiTe}[1]{\bmpiT{1, #1}}

\newcommand{\bmpih}[1]{\bmpi_h^{#1}}
\newcommand{\bmpihzr}[1]{\bmpih{0, #1}}

\newcommand{\bmpiFT}[1]{\bmpi_\FT^{#1}}
\newcommand{\bmpiFTzr}[1]{\bmpiFT{0, #1}}



\newcommand{\rT}[1]{\bmrm{r}_T^{#1}}
\newcommand{\DT}[1]{\rmD_T^{#1}}
\newcommand{\GT}[1]{\bmG_T^{#1}}
\newcommand{\Gh}[1]{\bmG_h^{#1}}
\newcommand{\rh}[1]{\bmrm{r}_h^{#1}}



\newcommand{\bmUTk}{\ulbm{U}_{T}^{k}}
\newcommand{\bmUhk}{\ulbm{U}_{h}^{k}}

\newcommand{\bmUhkzr}{\ulbm{U}_{h,0}^{k}}
\newcommand{\bmUhkn}{\ulbm{U}_{h,\bmn}^{k}}


\newcommand{\bmIT}[1]{\ul{\bmI}_{T}^{#1}}
\newcommand{\bmITk}{\bmIT{k}}

\newcommand{\bmIh}[1]{\ul{\bmI}_{h}^{#1}}
\newcommand{\bmIhk}{\bmIh{k}}


\def\a{\rma}
\newcommand{\aT}{\a_T}
\newcommand{\sT}{\rms_T}
\newcommand{\ah}{\a_h}

\newcommand{\dT}{\rmd_T}
\def\dh{\rmd_h}

\newcommand{\tT}{\rmt_T}
\def\th{\rmt_h}



\newcommand{\wT}{w_T}
\newcommand{\wh}{w_h}

\newcommand{\bmvT}{\bmv_T}
\newcommand{\bmvh}{\bmv_h}
\newcommand{\bmvF}{\bmv_F}
\newcommand{\bmvFT}{\bmv_{\FT}}

\newcommand{\bmuT}{\bmu_T}
\newcommand{\bmuh}{\bmu_h}

\newcommand{\bmbh}{\bmb_h}

\newcommand{\bmwT}{\bmw_T}
\newcommand{\bmwh}{\bmw_h}

\newcommand{\bmwFT}{\bmw_{\FT}}

\newcommand{\bmzT}{\bmz_T}
\newcommand{\bmzh}{\bmz_h}
\newcommand{\bmzF}{\bmz_F}
\newcommand{\bmzFT}{\bmz_{\FT}}

\newcommand{\ulbmvT}{\ulbm{v}_T}
\newcommand{\ulbmuT}{\ulbm{u}_T}
\newcommand{\ulbmvh}{\ulbm{v}_h}
\newcommand{\ulbmuh}{\ulbm{u}_h}

\newcommand{\ulbmwT}{\ulbm{w}_T}
\newcommand{\ulbmzT}{\ulbm{z}_T}

\newcommand{\ulbmbT}{\ulbm{b}_T}
\newcommand{\ulbmwh}{\ulbm{w}_h}
\newcommand{\ulbmzh}{\ulbm{z}_h}

\newcommand{\ulbmbh}{\ulbm{b}_h}




\newcommand{\logLogSlopeTriangle}[5]
{
	\pgfplotsextra
	{
		\pgfkeysgetvalue{/pgfplots/xmin}{\xmin}
		\pgfkeysgetvalue{/pgfplots/xmax}{\xmax}
		\pgfkeysgetvalue{/pgfplots/ymin}{\ymin}
		\pgfkeysgetvalue{/pgfplots/ymax}{\ymax}
		
		\pgfmathsetmacro{\xArel}{#1}
		\pgfmathsetmacro{\yArel}{#3}
		\pgfmathsetmacro{\xBrel}{#1-#2}
		\pgfmathsetmacro{\yBrel}{\yArel}
		\pgfmathsetmacro{\xCrel}{\xArel}
		
		\pgfmathsetmacro{\lnxB}{\xmin*(1-(#1-#2))+\xmax*(#1-#2)} 
		\pgfmathsetmacro{\lnxA}{\xmin*(1-#1)+\xmax*#1} 
		\pgfmathsetmacro{\lnyA}{\ymin*(1-#3)+\ymax*#3} 
		\pgfmathsetmacro{\lnyC}{\lnyA+#4*(\lnxA-\lnxB)}
		\pgfmathsetmacro{\yCrel}{\lnyC-\ymin)/(\ymax-\ymin)}
		
		\coordinate (A) at (rel axis cs:\xArel,\yArel);
		\coordinate (B) at (rel axis cs:\xBrel,\yBrel);
		\coordinate (C) at (rel axis cs:\xCrel,\yCrel);
		
		\draw[#5]   (A)-- node[pos=0.5,anchor=north] {\scriptsize{1}}
		(B)-- 
		(C)-- node[pos=0.,anchor=west] {\scriptsize{#4}} 
		cycle;
	}
}

\begin{document}

\title{A hybrid high-order scheme for the stationary, incompressible magnetohydrodynamics equations}
 \author[1]{J\'er\^ome Droniou}
\author[2]{Liam Yemm}
 \affil[1]{School of Mathematics, Monash University, Melbourne, Australia, \email{jerome.droniou@monash.edu}}
\affil[2]{School of Mathematics, Monash University, Melbourne, Australia, \email{liam.yemm@monash.edu}}
\date{}
\maketitle

\begin{abstract}
  We propose and analyse a hybrid high-order (HHO) scheme for the stationary incompressible magnetohydrodynamics equations.
  The scheme has an arbitrary order of accuracy and is applicable on generic polyhedral meshes. For sources that are small
  enough, we prove error estimates in energy norm for the velocity and magnetic field, and $L^2$-norm for the pressure;
  these estimates are fully robust with respect to small faces, and of optimal order with respect to the mesh size.
  Using compactness techniques, we also prove that the scheme converges to a solution of the continuous problem, irrespective
  of the source being small or large. Finally, we illustrate our theoretical results through 3D numerical tests on tetrahedral
  and Voronoi mesh families.
  \medskip\\
  \textbf{Key words:} hybrid high-order methods, magnetohydrodynamics
  \medskip\\
  \textbf{MSC2010:} 65N12, 65N15, 65N30.
\end{abstract}


\section{Introduction}\label{sec:introduction}

The theory of magnetohydrodynamics (MHD) models the interaction between magnetic fields and the motion of electrically conducting fluids \cite{davidson:2002:introduction}. Such models are ubiquitous in astrophysical systems \cite{goedbloed.poedts:2004:principles,goossens:2003:introduction} including the dynamics of interstellar dust clouds \cite{zweibel:1999:magnetohydrodynamics,shukla.rahman:1996:magnetohydrodynamics} and the dynamo effect responsible for the generation of planetary magnetic fields (including that of the earth) \cite{busse:1978:magnetohydrodynamics,glatzmaier.roberts:1995:dynamo,krause.radler:2016:mean}. One of the most heavily researched astrophysical applications of MHD is in solar physics \cite{priest:2014:magnetohydrodynamics}, for-instance the formation of stars \cite{schober.schleicher.ea:2012:small,machida.matsumoto.ea:2008:magnetohydrodynamics}, the coronal heating problem \cite{walsh.ireland:2003:heating,alfven.lindblad:1947:granulation,aschwanden:2006:physics}, the behaviour of the solar magnetic field \cite{charbonneau:2014:solar,krause.radler:2016:mean}, and the modelling of solar flares \cite{shibata:2011:solar,priest:1981:solar}. MHD is regularly applied to the physics of nuclear reactors, such as in the modelling of plasma dynamics in fusion reactors \cite{miyamoto:1980:plasma}, or that of liquid metal coolants in both fission \cite{forsberg:2005:advanced} and fusion \cite{tabares:2015:present} reactors. Magnetohydrodynamical theory is also increasingly being applied to industry \cite{davidson:1999:magnetohydrodynamics, al-habahbeh.al-saqqa.ea:2016:review} where magnetic fields are used to heat, pump, and control the flow of liquid metals.

Early numerical analysis of MHD began with the seminal paper \cite{gunzburger.meir.ea:1991:existence}, which proves existence and convergence of a discrete solution to the stationary, incompressible MHD equations in convex domains with $C^{1,1}$ boundary. The paper considers $\inf$-$\sup$ stable mixed elements for the kinetic variables, and $H^1$-conforming approximations of the magnetic unknowns. A more appropriate functional setting for the magnetic variables in more general domains is that of $H(\CURL;\Omega)$ \cite[cf.][]{costabel.dauge:2002:weighted}. Such a setting is provided by \cite{schotzau:2004:mixed}, which proposes a mixed finite element approximation shown to converge optimally in non-convex domains under a small data assumption. Convergence of a stabilised finite element method (FEM) for linearised MHD is shown in \cite{gerbeau:2000:stabilized}, where the nonlinear problem is approximated by a coupled Picard algorithm which requires resolving the linear case at each iteration. A convergent method for the non-stationary problem is provided by \cite{prohl:2008:convergent}. The early work of \cite{meir.schmidt:1999:analysis} provides analysis and numerical theory of MHD with non-ideal boundary, which resolves some of the many difficulties of modelling realistic magnetic flow problems. A review of the various numerical methods at the time for the discretisation of three-dimensional incompressible MHD is given in \cite{salah.soulaimani.ea:2001:finite}. A more detailed description of the known results for MHD is provided by \cite{gerbeau.le-bris.ea:2006:mathematical}. Due to the saddle-point structure of both the fluid and magnetic equations, many of the aforementioned conforming methods require elements satisfying an $\inf$-$\sup$ condition. This becomes problematic as the Reynolds and magnetic Reynolds number become too large. A stabilised finite element discretisation is proposed in \cite{badia.codina.ea:2013:unconditionally} and shown to converge unconditionally to the physical solution -- even in the presence of singular solutions. One of the earliest works focused on conserving the divergence-free constraints on the magnetic field is found in \cite{salah.soulaimani.eq:1999:conservative}, which provides a stabilised mixed method for the non-stationary problem. However, only the magnetic equations are discretised, and the fluid velocity is assumed to be prescribed. More recently, a stabilised FEM for stationary MHD is designed in \cite{hu.ma.ea:2017:stable} which preserves the divergence free constraints of both the velocity and magnetic fields at the discrete level. We also make note of the recent publications \cite{hu.qiu.ea:2020:convergence,qiu.shi:2020:analysis,he.dong.ea:2022:uniform}. A non-conforming approximation of the linearised model is proposed in \cite{houston.schotzau.ea:2009:mixed} using a mixed discontinuous Galerkin (DG) approach. Following this work, a DG approximation of the non-linear model is provided by \cite{qiu.shi:2020:mixed} which conserves the divergence free conditions. The DG approach has the drawback of leading to a large number of degrees of freedom. This issue can be alleviated, as shown in \cite{qiu.shi:2020:mixed}, through the use of a hybridisable discontinuous Galerikin (HDG) approximation, for which element degrees of freedom can be locally eliminated through static condensation. The recent article \cite{gleason.peters.ea:2022:divergence} devises a HDG method which locally conserves the divergence free constraints of both the fluid and magnetic field, and satisfies a global momentum balance. For standard diffusion models, HDG and Hybrid-High Order methods share many similarities \cite{cockburn.di-pietro.ea:2016:bridging}; however, the HDG method of \cite{qiu.shi:2020:mixed,gleason.peters.ea:2022:divergence} use hybrid spaces also for the fluid and magnetic pressure, which is not the usual choice in HHO for Navier--Stokes, and leads to many more degrees of freedom -- as expected to achieve improved conservation of divergence (see discussion in Remark \ref{rem:hybrid.spaces} below).

Developed in \cite{di-pietro.ern.ea:2014:arbitrary, di-pietro.ern:2015:hybrid}, hybrid high-order (HHO) schemes are modern polytopal methods for the approximation of elliptic PDEs. A key aspect of HHO is its applicability to generic meshes with arbitrarily shaped elements. Additionally, HHO methods are of arbitrary order, dimension independent, and are built on polynomial reconstructions that account for the local physics and enable robustness with respect to the model's parameters. Hybrid high-order methods are also amenable to static condensation of the system matrix which drastically reduces the number of globally coupled degrees of freedom. A thorough review of the analysis and application of HHO methods is given in the monograph \cite{di-pietro.droniou:2020:hybrid}. There have been numerous studies of hybrid high-order discretisations of the Stokes \cite{di-pietro.ern.ea:2016:discontinuous, botti.di-pietro:2022:p-multilevel} and Navier-Stokes \cite{botti.di-pietro.ea:2019.hybrid,di-pietro.krell:2018:hybrid} equations. By considering a hybrid pressure space, an HHO discretisation of the Navier-Stokes problem can be devised which locally preserves the conservation of mass of the fluid, and even exhibit robustness in the incompressible Euler limit \cite{botti.massa:2022:hho}. The recent article \cite{chave.di-pietro.lemaire:2022:discrete} devises a HHO method for a magnetostatics problem, albeit without fully exploiting the principle of high-order reconstruction of HHO methods for the curl operator. However, to the best of our knowledge, no hybrid high-order schemes for MHD models exist.

Closely related to the HHO method are the virtual element methods (VEM) \cite{beirao-da-veiga.brezzi.ea:2013:basic,brenner.sung:2018:virtual,ayuso-de-dios.lipnikov.ea:2016:nonconforming}, which are also in principle arbitrary-order polytopal methods. A VEM Stokes complex is developed in \cite{BDV:3D:ST}, and used in the recent work \cite{vem:mhd:22} to design a VEM scheme for MHD equations. However, only the lowest order VEM is considered and analysed in \cite{vem:mhd:22}.

This paper designs and analyses an HHO discretisation of the stationary, incompressible MHD equations. By considering the problem posed in a polyhedral domain we are able to consider a weak formulation where both the fluid and magnetic unknowns are $H^1$ and the main differential operator is the Laplacian, and exploit certain symmetries of the model. Compared to the existing literature, our main contributions are:
\begin{itemize}
\item arbitrary-order scheme for the full MHD equations, that is applicable on generic polyhedral meshes,
\item optimal-order error estimates, in the case of small source terms, that are fully robust with respect to small faces,
\item convergence analysis without any assumption on the magnitude of the source terms,
\item 3D tests with various polynomial degrees.
\end{itemize}
The paper is organised as follows. In the next subsection, we briefly present the governing equations and recast them into a suitable form for an HHO discretisation. The scheme itself is presented in Section \ref{sec:scheme}, with the main convergence results in Section \ref{sec:main.results} (including uniqueness of the solution and error estimates for small source terms, and existence of the solution and convergence irrespective of the source term).
The analysis of the scheme is carried out in Section \ref{sec:proofs}. We then present 3D simulations on tetrahedral and Voronoi (polyhedral) meshes in Section \ref{sec:numerical}, for polynomial degrees up to two (rates of convergence up to three).
A conclusion is presented in Section \ref{sec:conclusion}.

\subsection{Governing equations}

The mathematics of MHD combines the Navier--Stokes equations governing the motion of a viscous fluid, and Maxwell’s equations and the Lorentz force at the core of electromagnetism. A number of key assumptions are made in deriving the central model \eqref{eq:mhd} used throughout this paper. We assume that the fluid satisfies a continuum assumption so that any discrete behaviour of particles may be ignored, and that typical length scales are much greater than the Debye length so that the net charge of any fluid element can be assumed to be zero. We consider here the case of an incompressible, Newtonian fluid. This assumes a sufficiently small Mach number, and a linear relation between the stress and strain tensors. We also assume that the fluid velocity is sufficiently smaller than the speed of light to ignore any relativistic effects, and that the relationship between the fluid velocity and magnetic field is governed by Ohm's law. All the assumptions mentioned above fall in one of the following four categories: the typical length scale is sufficiently large, the fluid velocity is sufficiently small, the viscosity is sufficiently small, and the electrical conductivity is sufficiently large.

Given a medium with constant viscosity $\nu$ and constant magnetic permeability $\mu$, the incompressible magnetohydrodynamic equations read:
\begin{align*}
	- \nu_{k}\Delta \bmu + (\bmu \cdot \nabla)\bmu + \nabla \frac{p}{\rho} - (\nabla \times \bmb) \times \bmb  \eq -\frac{\partial \bmu}{\partial t} + \bmf, \\
	\DIV \bmu \eq 0, \\
	\nu_m \nabla \times \nabla \times \bmb - \nabla \times(\bmu \times \bmb) \eq -\frac{\partial \bmb}{\partial t}, \\
	\DIV \bmb \eq 0,  
\end{align*}
where $p$ and $\bmu$ denote the fluid pressure and velocity respectively, $\bmf$ the external body force per unit mass, $\rho$ is the mass density, $\nu_k=\frac{\nu}{\rho}$ and $\nu_m=\frac{1}{\mu \sigma}$ (where $\sigma$ is the electrical conductivity) are constants representing kinematic viscosity and magnetic diffusivity respectively, and $\bmb$ represents (after the re-scaling $\bmb \mapsto \frac{1}{\sqrt{\mu \rho}} \bmb$) the magnetic field.

Due to the zero divergence conditions on both $\bmb$ and $\bmu$, the following identities hold:
\begin{align*}
\nabla \times \nabla \times \bmb \eq - \Delta \bmb, \\
- \nabla \times(\bmu \times \bmb) \eq (\bmu \cdot \nabla) \bmb - (\bmb \cdot \nabla) \bmu.
\end{align*}
We also note that
\[	
(\nabla \times \bmb) \times \bmb = (\bmb \cdot \nabla) \bmb - \nabla \left(\frac12 \bmb \cdot \bmb\right).
\]
For the sake of well-posedness, a Lagrange multiplier $r$ for the divergence free constraint on $\bmb$ is introduced to the magnetic equations. Thus, we consider the steady state problem: given a bounded Lipschitz domain $\Omega\subset\R^3$, find the fluid velocity $\bmu$, the magnetic field $\bmb$, and scalars $q$ and $r$ such that
\begin{subequations}\label{eq:mhd}
\begin{align}	
- \nu_k \Delta \bmu + (\bmu \cdot \nabla)\bmu - (\bmb \cdot \nabla)\bmb + \nabla q \eq \bmf, \label{eq:mhd.fluid}\\
- \nu_m \Delta \bmb + (\bmu \cdot \nabla) \bmb - (\bmb \cdot \nabla) \bmu + \nabla r \eq \bmg, \label{eq:mhd.magnetic}\\
\DIV \bmu \eq 0, \label{eq:mhd.fluid.div}\\
\DIV \bmb \eq 0, \label{eq:mhd.magnetic.div}
\end{align}
for some source terms $\bmf, \bmg\in \LTWO(\Omega)^3$, with $\DIV\bmg=0$. We consider the boundary conditions
\begin{alignat}{2}
\bmu \eq \bm{0} \qquad &\textrm{on }& \partial\Omega, \label{eq:mhd.bc.u}\\
\bmn \times (\nabla \times \bmb) \eq \bm{0} \qquad &\textrm{on }& \partial\Omega,\label{eq:mhd.bc.curlb} \\
\bmb \cdot \bmn \eq 0 \qquad &\textrm{on }& \partial\Omega, \label{eq:mhd.bc.b}\\
\int_{\Omega} r =\int_{\Omega} q\eq 0.\label{eq:mhd.bc.rq}
\end{alignat}
\end{subequations} 

The regularity of weak solutions of such a formulation (with the Laplacian acting as the key differential operator in the magnetic equations) is investigated in \cite{chen.miao.eq:2008:regularity} for the non-stationary problem in the whole space $\R^3$.

\begin{remark}[Boundary conditions]
	The boundary condition \eqref{eq:mhd.bc.u} is the classical no-slip condition, and \eqref{eq:mhd.bc.curlb} represents a perfectly conducting wall. The condition \eqref{eq:mhd.bc.b} ensures continuity of the normal component of the magnetic field at the boundary and should be more accurately described by $\bmb\cdot\bmn = b_n$, for some prescribed $b_n$ representing the normal component of $\bmb$ outside the boundary. However, to ease the exposition we consider here homogeneous conditions. A detailed discussion of boundary conditions for MHD can be found in \cite{potherat.sommeria.ea:2002:effective}.
\end{remark}

\subsection{A weak formulation}

To justify the treatment of boundary conditions in the weak formulation of \eqref{eq:mhd}, we need the following lemma.

\begin{lemma}\label{lem:boundary.term}
	Let $\bmv:\Omega\to\R^3$ be a piecewise $H^2$ vector field on $\Omega$ and $\bmw\in L^2(\partial\Omega;\R^3)$. We assume that the outer normal $\bmn:\partial\Omega\to\R^3$ to $\Omega$ is piecewise $C^2$, and that
	\begin{equation}
		\bmw \cdot \bmn = \bmv \cdot \bmn = 0 \qquad \textrm{a.e.~on } \partial\Omega. \label{eq:vw.tangent}
	\end{equation}
	Then,
	\[
	\bmw \cdot (\bmn \cdot \nabla) \bmv = -\bmkappa\bmw\cdot\bmv - \bmw\cdot[\bmn\times(\nabla\times \bmv)]\quad\mbox{ a.e.~on $\partial\Omega$},
	\]
	where $\boldsymbol{\kappa}=(\partial_j n_i)_{i,j}$ is the extrinsic curvature of $\partial \Omega$.
	As a consequence, if $\Omega$ is polyhedral then 
	\begin{equation}
	\bmw \cdot (\bmn \cdot \nabla) \bmv =- \bmw\cdot[\bmn\times(\nabla\times \bmv)]\quad\mbox{ a.e.~on $\partial\Omega$}.
	\label{eq:bdry.cancellation.polyhedral}
	\end{equation}
\end{lemma}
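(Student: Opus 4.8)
The plan is to reduce the identity to a purely pointwise computation on $\partial\Omega$, using a local orthonormal frame adapted to the boundary. First I would observe that since $\bmv\cdot\bmn=0$ on $\partial\Omega$ by \eqref{eq:vw.tangent}, differentiating this scalar identity tangentially along $\partial\Omega$ gives a relation between the tangential derivatives of $\bmv$ paired with $\bmn$ and the derivatives of $\bmn$ paired with $\bmv$. Concretely, for any tangential direction $\bmt$ at a point of $\partial\Omega$ one has $\bmt\cdot\nabla(\bmv\cdot\bmn)=0$, i.e. $\bmn\cdot(\bmt\cdot\nabla)\bmv = -\bmv\cdot(\bmt\cdot\nabla)\bmn$. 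This is the mechanism by which the extrinsic curvature $\bmkappa=(\partial_j n_i)_{i,j}$ enters.

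Next I would decompose the vector $(\bmn\cdot\nabla)\bmv$ into its normal and tangential parts relative to $\partial\Omega$ and pair each with $\bmw$. Since $\bmw$ is tangential ($\bmw\cdot\bmn=0$), only the tangential part of $(\bmn\cdot\nabla)\bmv$ survives in $\bmw\cdot(\bmn\cdot\nabla)\bmv$. The key vector-calculus identity I would invoke is the standard decomposition of the full gradient against a unit vector: for the unit normal $\bmn$,
\[
(\bmn\cdot\nabla)\bmv = \nabla(\bmv\cdot\bmn) - \bmn\times(\nabla\times\bmv) - (\text{terms involving } \nabla\bmn \text{ acting on } \bmv),
\]
more precisely using $\nabla(\bmv\cdot\bmn)=(\bmn\cdot\nabla)\bmv+(\bmv\cdot\nabla)\bmn+\bmn\times(\nabla\times\bmv)+\bmv\times(\nabla\times\bmn)$, valid when $\bmn$ is extended smoothly (e.g. as the gradient of the signed distance function, so that $\nabla\times\bmn=\bm 0$ and $\nabla\bmn$ is symmetric). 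With such an extension $\bmv\times(\nabla\times\bmn)=\bm0$, and pairing with $\bmw$ gives $\bmw\cdot(\bmn\cdot\nabla)\bmv = \bmw\cdot\nabla(\bmv\cdot\bmn) - \bmw\cdot(\bmv\cdot\nabla)\bmn - \bmw\cdot[\bmn\times(\nabla\times\bmv)]$. The first term vanishes because $\bmv\cdot\bmn\equiv0$ on $\partial\Omega$ and $\bmw$ is tangential (so $\bmw\cdot\nabla$ of a function constant on $\partial\Omega$ is zero), and $\bmw\cdot(\bmv\cdot\nabla)\bmn = \sum_{i,j}w_i v_j\partial_j n_i = \bmkappa\bmw\cdot\bmv$ with the convention for $\bmkappa$ given in the statement. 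This yields the claimed formula.

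A few technical points need care. The field $\bmv$ is only piecewise $H^2$, so the pointwise products on $\partial\Omega$ should be understood via traces on each smooth piece, and the identity is asserted a.e.; this is fine since all manipulations are local and $H^2$ traces of first derivatives live in $L^2(\partial\Omega)$. The normal $\bmn$ is only piecewise $C^2$, so the smooth extension and the identity $\nabla\times\bmn=\bm0$, $\nabla\bmn$ symmetric, hold away from the singular set of $\partial\Omega$ (edges/corners in the polyhedral case), which has measure zero — again consistent with an a.e.\ statement. Finally, for the polyhedral case \eqref{eq:bdry.cancellation.polyhedral}, I would simply note that on each (flat) face $\bmn$ is locally constant, so $\bmkappa=\partial_j n_i=0$ a.e., and the curvature term drops out.

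The main obstacle, and the only part requiring genuine attention rather than bookkeeping, is getting the vector-calculus identity and the sign/index conventions exactly right — in particular verifying that $\bmw\cdot(\bmv\cdot\nabla)\bmn$ equals $\bmkappa\bmw\cdot\bmv$ with $\bmkappa=(\partial_j n_i)_{i,j}$ acting as stated (as opposed to its transpose), and confirming that the chosen smooth extension of $\bmn$ (distance-function extension) does make $\nabla\bmn$ symmetric so that the $\bmv\times(\nabla\times\bmn)$ term genuinely vanishes. Everything else is routine.
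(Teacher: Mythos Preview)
Your proposal is correct and essentially follows the paper's proof: both hinge on the pointwise vector identity relating $(\bmn\cdot\nabla)\bmv$, $\nabla(\bmn\cdot\bmv)$, and $\bmn\times(\nabla\times\bmv)$, then use that $\bmw$ is tangential and $\bmn\cdot\bmv=0$ on $\partial\Omega$ to kill the $\bmw\cdot\nabla(\bmn\cdot\bmv)$ term and read off the curvature contribution. The only stylistic difference is that the paper writes the identity as $(\bmn\cdot\nabla)\bmv=\nabla_{\bmv}(\bmn\cdot\bmv)-\bmn\times(\nabla\times\bmv)$ with the ``frozen-$\bmn$'' gradient $\nabla_{\bmv}$ (so no $\nabla\times\bmn$ term ever appears and no special extension of $\bmn$ is needed for the identity itself), whereas you use the full product rule and rely on the signed-distance extension to make $\nabla\times\bmn=\bm{0}$ and $\nabla\bmn$ symmetric---both routes are equivalent, and your awareness of the transpose issue is well placed (it is resolved either by your symmetric extension or, intrinsically, by the symmetry of the Weingarten map on tangential $\bmv,\bmw$).
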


\begin{proof}
	We have the general formula 
	\begin{equation}\label{eq:form.curl}
		(\bmn\cdot\nabla)\bmv=\nabla_{\bmv}(\bmn\cdot\bmv)-\bmn\times(\nabla\times \bmv),
	\end{equation}
	where the gradient $\nabla_{\bmv}(\bmn\cdot\bmv)$ is computed assuming that $\bmn$ is constant -- that is, the $i$-th component of this vector is 
	\[
		\sum_{j=1}^3 n_j\partial_i v_j=\partial_i(\bmn\cdot\bmv) - \sum_j(\partial_i n_j)v_j=\partial_i(\bmn\cdot\bmv)-(\bmkappa^\intercal\bmv)_i.
	\]
	Hence, $\bmw\cdot\nabla_{\bmv}(\bmn\cdot\bmv)=\bmw \cdot\nabla(\bmn\cdot\bmv) - \bmkappa\bmw\cdot\bmv$.
	By \eqref{eq:vw.tangent}, the vector field $\bmw$ is tangent to $\partial\Omega$ and thus $\bmw\cdot\nabla$ is a tangential derivative. Since $\bmn\cdot\bmv=0$ on $\partial\Omega$, its tangential derivative also vanishes and thus $\bmw\cdot\nabla_{\bmv}(\bmn\cdot\bmv)=-\bmkappa\bmw\cdot\bmv$. Combined with \eqref{eq:form.curl}, this concludes the proof of the first part of the lemma.
	
	If $\Omega$ is polyhedral, then its boundary is smooth on all its faces (so, a.e.), on which $\boldsymbol{\kappa}=\bm{0}$. This proves the second part of the lemma.
\end{proof}

In the following, we consider a polyhedral domain $\Omega$. Assuming that the vector fields $\bmv,\bmw$ are sufficiently smooth and satisfy $\bmv\cdot\bmn=\bmw\cdot\bmn=0$ on $\bdry \Omega$ and $\bmw|_{\partial\Omega}\in L^2(\partial\Omega;\R^3)$, and  noticing that $\sum_{i=1}^3 w_i \nabla v_i \cdot \nor=
\bmw \cdot (\bmn \cdot \nabla) \bmv$, we infer from Lemma \ref{lem:boundary.term} the following integration-by-parts formula:
\begin{equation}\label{eq:ibp.boundary}
-\int_\Omega \bmw \cdot \Delta \bmv = \sum_{i=1}^3\left( \int_\Omega \nabla w_i \cdot \nabla v_i - \int_{\partial \Omega} w_i \nabla v_i \cdot \nor\right) \\
= \int_\Omega \nabla \bmw : \nabla \bmv + \int_{\partial \Omega} \bmw\cdot[\bmn\times(\nabla\times \bmv)].
\end{equation}

Let us define the spaces
\begin{align*}	
	\bmU \eq \HONEzr(\Omega)^3, \\
	\bmB \eq \{\bmb \in \HONE(\Omega)^3: \bmb \cdot \bmn = 0 \textrm{ on } \partial\Omega\}, \\
	P \eq \left\{p \in \LTWO(\Omega): \int_{\Omega}p = 0\right\}.
\end{align*}
By multiplying \eqref{eq:mhd.fluid} by $\bmv\in \bmU$ and \eqref{eq:mhd.magnetic} by $\bmw\in \bmB$, integrating on $\Omega$ and using \eqref{eq:ibp.boundary}, we see that the strong form \eqref{eq:mhd} of the MHD equations is formally equivalent to the following weak formulation: find $\bmu \in \bmU$, $\bmb \in \bmB$, $q, r \in P$ such that
\begin{subequations}\label{eq:variational}
\begin{alignat}{2}
 \nu_k \a(\bmu, \bmv) + \rmt(\bmu, \bmu, \bmv) - \rmt(\bmb, \bmb, \bmv) + \rmd(\bmv, q) \eq \brac[\Omega]{\bmf, \bmv} \qquad &\forall& \bmv\in \bmU, \label{eq:variational.fluid}\\
 \nu_m \a(\bmb, \bmw) + \rmt(\bmu, \bmb, \bmw) - \rmt(\bmb, \bmu, \bmw) + \rmd(\bmw, r) \eq \brac[\Omega]{\bmg, \bmw} \qquad &\forall& \bmw\in \bmB, \label{eq:variational.magnetic}\\
 -\rmd(\bmu, s) \eq 0 \qquad &\forall& s\in P, \label{eq:variational.fluid.div}\\
 -\rmd(\bmb, z) \eq 0 \qquad &\forall& z\in P, \label{eq:variational.magnetic.div}
\end{alignat}
\end{subequations}
with bilinear forms $\a : \HONE(\Omega)^3 \times \HONE(\Omega)^3 \to \R$ and $\rmd : \HONE(\Omega)^3 \times \LTWO(\Omega) \to \R$ defined by
\begin{equation*}
	\rma(\bmv, \bmw) \defeq \int_{\Omega} \nabla \bmv : \nabla \bmw,\qquad \rmd(\bmv, s) \defeq - \int_{\Omega} (\nabla \cdot \bmv) s,
\end{equation*}
and trilinear form $\rmt:\HONE(\Omega)^3 \times \HONE(\Omega)^3 \times \HONE(\Omega)^3 \to \R$ such that
\begin{equation}\label{eq:t.def}
	\rmt(\bmv, \bmw, \bmz) \defeq \int_{\Omega}(\bmv \cdot \nabla)\bmw \cdot \bmz.
\end{equation}

\begin{remark}[Regularity of $\bmb$]	
	By applying the integration by parts formula \eqref{eq:ibp.boundary} to derive equation \eqref{eq:variational.magnetic} we have implicitly assumed that $\bmb$ is smooth enough. The resulting formulation also seeks for $\bmb\in\HONE(\Omega)^3$, which may be stronger than the $H(\CURL;\Omega)$ regularity that is expected on the magnetic field in some situations. To cover such cases, a different HHO approach needs to be considered, such as the one in \cite{chave.di-pietro.lemaire:2022:discrete}.
\end{remark}

\section{Hybrid high-order scheme for the MHD model}\label{sec:scheme}

We consider polyhedral meshes as in \cite[Definition 1.4]{di-pietro.droniou:2020:hybrid}.
Fixing a countable set of mesh sizes \(\calH\subset(0, \infty)\) with a unique cluster point at \(0\), for each $h\in\calH$ we consider a mesh \(\Mh=(\Th, \Fh)\) of the domain $\Omega$ as follows: $\Th$ is a collection of disjoint open polyhedra (the elements) such that $\ol{\Omega}=\cup_{T\in\Th}\ol{T}$, and $\Fh$ is a collection of disjoint planar sets (the faces) such that $\cup_{T\in\Th}\bdryT=\cup_{F\in\Fh}\ol{F}$.

We shall also collect the boundary faces \(F\subset\partial \Omega\) in the set \(\Fhb\), and the interior faces in the set $\Fhi = \Fh \backslash \Fhb$. For \(X=T\in\Th\) or \(X=F\in\Fh\), \(\hX\) denotes the diameter of \(X\). 
The parameter \(h\) is related to the elements diameters by \(h\defeq\max_{T\in\Th}\hT\). For each \(T\in\Th\), the set \(\FT:=\{F\in\Fh:F\subset \bdryT\}\) denotes the collection of faces contained in the boundary of $T$. Similarly, the one or two elements attached to a face $F\in\Fh$ is denoted by $\Th[F] \defeq \{T\in\Th : F\subset\bdryT\}$. The (constant) unit normal to \(F\in\FT\) pointing outside \(T\) is denoted by \(\norTF\), and \(\norT:\bdryT\to\R^3\) is the piecewise constant outer unit normal defined by \((\norT)|_F=\norTF\).

We make the following assumption on the mesh which ensures that boundary trace inequalities, and Lebesgue and Sobolev embeddings hold independent of the parameter $h$.

\begin{assumption}[Regular mesh sequence]\label{assum:star.shaped}	
	There exists a constant \(\varrho>0\) such that, for each \(h\in\calH\), each \(T\in\Th\) is star-shaped with respect to a ball of radius $r_T \ge \varrho^{-1} \hT$.
\end{assumption}

\begin{remark}[About the mesh regularity property]
	It has recently been proved in \cite{droniou.yemm:2021:robust} that HHO methods for elliptic equations are robust and convergent under weaker mesh regularity assumptions than Assumption \ref{assum:star.shaped}. The analysis carried out here would also hold under the weaker assumption introduced in this reference. We adopt here the slightly stronger Assumption \ref{assum:star.shaped} (which still allows for arbitrarily shaped and sized faces, on the contrary to standard mesh regularity assumptions for HHO methods -- see e.g.~\cite[Definition 1.9]{di-pietro.droniou:2020:hybrid}) to simplify the presentation, mostly the proof of the discrete Poincar\'e--Sobolev--Wirtinger inequality in Lemma \ref{lem:sob.poincare}.
\end{remark}

From hereon, we shall denote \(X\lesssim Y\) to mean \(X \le CY\) where \(C\) is a constant independent of the quantities $X$ and $Y$, of the mesh size $h$, and of the model data $(\nu_k,\nu_m,\bmf,\bmg)$.

\subsection{Local construction}

In this section we construct local approximations of each of the bilinear forms $\a$ and $\rmd$, and the trilinear form $\rmt$ on each element $T\in\Th$. The local space of functions on each element is given by a couple \((u,v)\) where \(u\) is a polynomial on the element and \(v\) is a piecewise discontinuous polynomial function on the boundary. For each of the differential operators appearing in the continuous formulation, we define discrete analogues acting on the local space. We can then define local discrete forms $\aT$, $\dT$, $\tT$ which mimic their continuous counterparts. These constructions follow \cite[Chapter 9]{di-pietro.droniou:2020:hybrid} but we recall them for the sake of legibility.

Let \(X=T\in\Th\) or \(X=F\in\Fh\) be an element or a face in a mesh \(\Mh\), given an integer $k\ge 0$, we denote by \(\POLY{k}(X)\) the set of \(d_X\)-variate polynomials of total degree \(\le k\) on \(X\), where \(d_X\) is the dimension of \(X\). The space of piecewise discontinuous polynomial functions on an element boundary is given by
\begin{equation*}
	\POLY{k}(\Fh[T]) \defeq \{v\in L^1(\bdryT):v|_F\in\POLY{k}(F)\quad\forall F\in\Fh[T]\}.
\end{equation*}
On each element $T\in\Th$, the local space of unknowns is defined as follows:
\begin{equation*}
	\bmUTk \defeq \POLY{k}(\T)^3 \times \POLY{k}(\FT)^3.
\end{equation*}
We also endow the space $\bmUTk$ with the $H^1$-like seminorm $\norm[1, T]{\cdot}:\bmUTk\to\R$ defined for all $\ulbmvT\in\bmUTk$ via
\begin{equation}\label{eq:local.norm.def}
	\norm[1, T]{\ulbmvT}^2 \defeq \norm[\T]{\nabla \bmvT}^2 + \hT^{-1}\norm[\bdryT]{\bmvFT - \bmvT}^2.
\end{equation}
The local interpolator $\bmITk:\HONE(T)^3\to\bmUTk$ is given by
\[
	\bmITk \bmv = (\bmpiTzr{k}\bmv, \bmpiFTzr{k}\bmv)\qquad\forall \bmv\in\HONE(T)^3
\]
where $\bmpiTzr{k}$ and $\bmpiFTzr{k}$ denote the $\LTWO$-orthogonal projectors on the spaces $\POLY{k}(\T)^3$ and $\POLY{k}(\FT)^3$, respectively.

\subsubsection{Discretisation of the bilinear forms}

The local reconstruction $\rT{k+1}:\bmUTk\to\POLY{k+1}(T)^3$ is defined such that for every $\ulbmvT = (\bmvT, \bmvFT) \in \bmUTk$,
\begin{align}\label{eq:rT.def}
	\int_T \nabla \rT{k+1}\ulbmvT : \nabla \bmw ={}& \int_T \nabla\bmvT : \nabla \bmw + \int_{\partial T} (\bmvFT-\bmvT) \cdot (\norT \cdot \nabla) \bmw \qquad \forall \bmw \in \POLY{k+1}(T)^3,\\
\nonumber
	\int_T (\rT{k+1}\ulbmvT - \bmvT) ={}& \bm{0}.
\end{align}
We also define the divergence reconstruction $\DT{k}:\bmUTk\to\POLY{k}(T)$ to satisfy
\begin{equation}\label{eq:DT.def}
	\int_T \DT{k} \ulbmvT q = -\int_T \bmvT \cdot \nabla q + \int_{\partial T} (\bmvFT \cdot \norT) q \qquad \forall q \in \POLY{k}(T).
\end{equation}
The potential and divergence reconstructions satisfy \cite[Eqs.~(8.17) and (8.21)]{di-pietro.droniou:2020:hybrid} the following properties:
\begin{equation}\label{eq:rT.and.DT.commutation}
	\rT{k+1} \bmITk \bmv = \bmpiTe{k+1} \bmv \quad\textrm{and}\quad \DT{k}\bmITk \bmv = \piTzr{k} (\nabla \cdot \bmv)
\end{equation}
where $\bmpiTe{k+1}$ is the elliptic projector \cite[Definition 1.39]{di-pietro.droniou:2020:hybrid}, which enjoys optimal volumetric approximation properties \cite[Theorem 1.48]{di-pietro.droniou:2020:hybrid} (including for the weaker mesh assumptions we consider, see \cite[Lemma 5]{droniou.yemm:2021:robust}).

The continuous bilinear form $\rma$ is approximated by the discrete form $\aT:\bmUTk\times\bmUTk\to\R$ defined via
\begin{equation*}
	\aT(\ulbmuT, \ulbmvT) \defeq \int_T\nabla\rT{k+1}\ulbmuT : \nabla\rT{k+1}\ulbmvT + \sT(\ulbmuT, \ulbmvT)
\end{equation*}
where $\sT:\bmUTk\times\bmUTk\to\R$ is a symmetric positive semi-definite stabilisation bilinear form that satisfies boundedness and consistency properties:
\begin{align}\label{eq:aT.norm.equivalence}
	\norm[1, T]{\ulbmvT}^2 \lesssim{}& \aT(\ulbmvT, \ulbmvT) \lesssim \norm[1, T]{\ulbmvT}^2\qquad\forall\ulbmvT\in\bmUTk,\\
\label{eq:sT.consistency}
	\sT(\bmIT{k}\bmv, \bmIT{k}\bmv) \lesssim{}& \big[\hT^{k+1} \seminorm[H^{k+2}(T)]{\bmv}\big]^2\qquad\forall \bmv\in H^{k+2}(T)^3.
\end{align}
Examples of such stabilisation forms are given in \cite[Section 4]{droniou.yemm:2021:robust}.

We approximate $\rmd$ by the discrete form $\dT:\bmUTk\times\POLY{k}(T)\to\R$ defined via
\begin{equation*}
	\dT(\ulbmvT, q_T) = -\int_T \DT{k}\ulbmvT q_T.
\end{equation*}

\subsubsection{Discretisation of the trilinear form}

Consider, for $l\ge 0$, the gradient reconstruction $\GT{l}:\bmUTk\to\POLY{l}(T)^{3\times 3}$ defined to satisfy for each $\ulbmvT\in\bmUTk$
\begin{equation*}
	\int_T \GT{l}\ulbmvT : \bmtau = -\int_T\bmvT \cdot (\nabla \cdot \bmtau) + \int_{\partial T} \bmvFT \cdot (\bmtau \norT) \qquad\forall\bmtau\in\POLY{l}(T)^{3\times 3}.
\end{equation*}
For any $\bmv = (v_i)_{i=1,2,3} \in \LTWO(T)^3$, we then define the convective derivative $\bmv \cdot \GT{2k}:\bmUTk\to\LTWO(T)^3$ via
\[
	\SqBrac[i]{(\bmv \cdot \GT{2k})\ulbmwT} = \sum_{j=1}^3 v_j (\GT{2k}\ulbmwT)_{ij}  \qquad \forall \ulbmwT\in\bmUTk.
\]
By \cite[Remark 9.16]{di-pietro.droniou:2020:hybrid}, for any $\ulbmvT, \ulbmwT, \ulbmzT\in\bmUTk$ it holds that
\begin{equation}\label{eq:GT.expanded}
	\int_T (\bmvT \cdot \GT{2k})\ulbmwT\cdot\bmzT = \int_T (\bmvT \cdot \nabla)\bmwT\cdot\bmzT + \int_{\bdryT} (\bmvT \cdot \norT)(\bmwFT - \bmwT) \cdot \bmzT.
\end{equation}
The discrete trilinear form $\rmt_T:\bmUTk\times\bmUTk\times\bmUTk\to\R$ is defined as
\begin{equation}\label{eq:tT.def}
	\tT(\ulbmvT, \ulbmwT, \ulbmzT) \defeq \frac12 \BRAC{\int_T (\bmvT \cdot \GT{2k})\ulbmwT\cdot\bmzT- \int_T(\bmvT \cdot \GT{2k})\ulbmzT\cdot\bmwT}.
\end{equation}

\subsection{Discrete problem}

The global discrete space is defined as
\[
	\bmUhk \defeq \Big\{\ulbmvh=((\bmvT)_{T\in\Th},(\bmvF)_{F\in\Fh})\,:\,\bmvT\in\POLY{k}(T)^3\quad\forall T\in\Th\,,
	\bmvF\in\POLY{k}(F)^3\quad\forall F\in\Fh \Big\},
\]
and is equipped with the global seminorm $\norm[1, h]{\cdot}:\bmUhk\to\R$ and interpolator $\bmIhk:\HONE(\Omega)^3\to\bmUhk$ defined via
\begin{align*}
	\norm[1, h]{\ulbmvh}^2 \defeq{}& \sum_{T\in\Th} \norm[1, T]{\ulbmvT}^2\qquad\forall \ulbmvh\in\bmUhk,\\
	\bmIhk \bmv \big|_T ={}& \bmITk \bmv\qquad\forall \bmv\in\HONE(\Omega)^3.
\end{align*}

Similarly, for all $\ulbmvh, \ulbmwh, \ulbmzh\in\bmUhk$ and $q_h\in\POLY{k}(\Th)$, we define
\begin{align}\label{eq:ah.and.dh.def}
	\ah(\ulbmvh, \ulbmwh) \defeq{}& \sum_{T\in\Th}\aT(\ulbmvT, \ulbmwT), \qquad \dh(\ulbmvh, q_h) \defeq \sum_{T\in\Th}\dT(\ulbmvT, q_T),\\
\label{eq:th.def}
	\th(\ulbmvh, \ulbmwh, \ulbmzh) \defeq{}& \sum_{T\in\Th}\tT(\ulbmvT, \ulbmwT, \ulbmzT).
\end{align}

The boundary conditions on $\bmu$ and $\bmb$ are accounted for in the following two homogeneous subspaces of $\bmUhk$:
\begin{align*}
	\bmUhkzr \deq \Big\{\ulbmvh=((\bmvT)_{T\in\Th},(\bmvF)_{F\in\Fh}) \in \bmUhk : \bmvF = \bm{0}\quad\forall F \subset \partial\Omega \Big\}, 
	\\
	\bmUhkn \deq \Big\{\ulbmvh=((\bmvT)_{T\in\Th},(\bmvF)_{F\in\Fh}) \in \bmUhk : \bmvF\cdot\norF = 0\quad\forall F \subset \partial\Omega \Big\}. 
\end{align*}
It is clear that
\begin{equation*}
	\bmUhkzr \subset \bmUhkn.
\end{equation*}
Defining the pressure space as
\begin{equation}\label{def:Pk0}
\POLY{k}_0(\Th)\defeq \left\{z\in \LTWO(\Omega)\,:\,z_{|T}\in\POLY{k}(T)\quad\forall T\in\Th\,,\quad\int_\Omega z=0\right\},
\end{equation}
we set the following global discrete space, written as a Cartesian product of each of the global spaces:
\[
	\bmX_h^k \defeq \bmUhkzr\times\bmUhkn\times\POLY{k}_0(\Th)\times\POLY{k}_0(\Th).
\]
For $\ulbmzh\in \bmUhkzr$ or $\ulbmzh\in \bmUhkn$, we denote by $\bmzh:\Omega\to\R^3$ the piecewise polynomial function defined by $(\bmzh)_{|T}=\bmzT$ for all $T\in\Th$.
The discrete problem then reads: find $(\ulbmuh, \ulbmbh, q_h, r_h) \in \bmX_h^k$ such that
\begin{subequations}\label{eq:discrete}
\begin{align}	
	\nu_k \ah(\ulbmuh, \ulbmvh) + \th(\ulbmuh, \ulbmuh, \ulbmvh) - \th(\ulbmbh, \ulbmbh, \ulbmvh) + \dh(\ulbmvh, q_h) \eq \brac[\Omega]{\bmf, \bmvh} \qquad \forall \ulbmvh\in\bmUhkzr, \label{eq:discrete.fluid}\\
	\nu_m \ah(\ulbmbh, \ulbmwh) + \th(\ulbmuh, \ulbmbh, \ulbmwh) - \th(\ulbmbh, \ulbmuh, \ulbmwh) + \dh(\ulbmwh, r_h) \eq \brac[\Omega]{\bmg, \bmwh} \qquad \forall \ulbmwh\in\bmUhkn, \label{eq:discrete.magnetic}\\
	-\dh(\ulbmuh, s_h) \eq 0 \qquad \forall s_h\in \POLY{k}_0(\Th), \label{eq:discrete.fluid.div}\\
	-\dh(\ulbmbh, z_h) \eq 0 \qquad \forall z_h\in \POLY{k}_0(\Th). \label{eq:discrete.magnetic.div}
\end{align}
\end{subequations}

\begin{remark}[Hybrid pressure spaces]\label{rem:hybrid.spaces}
Hybrid pressure spaces --that is, discrete pressure spaces with unknowns in the elements and on the faces-- have been considered for the hybridizable discontinuous Galerkin discretisation of the Navier--Stokes equations in \cite{rhebergen.wells:2018:hybridizable}, and for the HHO discretisation of Stokes and Navier--Stokes equations in \cite{botti.di-pietro:2022:p-multilevel,botti.massa:2022:hho} (which, for a given number of globally coupled unknowns, achieves a higher degree of accuracy than \cite{rhebergen.wells:2018:hybridizable} thanks to the HHO potential reconstruction). Using hybrid pressure spaces ensures that the element velocities are pointwise divergence-free and have continuous normal traces (that is, $\DIV \bmuT=0$ for each $T\in\Th$, and $\bmu_T\cdot\bmn_{TF}+\bmu_{T'}\cdot\bmn_{T'F}=0$ for all $F\in\FT\cap\mathcal F_{T'}$). 

A variation of the HHO scheme \eqref{eq:discrete} with hybrid pressure spaces could be designed following the approach of \cite{botti.di-pietro:2022:p-multilevel,botti.massa:2022:hho}, and would lead to a scheme for which element velocities and magnetic fields are pointwise divergence-free and with continuous normal traces. This would, however, substantially increase the total number of globally coupled unknowns (after static condensation): for \eqref{eq:discrete}, the globally coupled unknowns are the face velocities and magnetic fields, and one pressure unknown per element; for a hybrid pressure version, the globally coupled unknowns would be the face velocities and magnetic fields, but also all the face unknowns of $q,r$. For the MHD model, which essentially consists of two coupled Navier--Stokes problems, this can lead to quite an expensive system to solve. We however note that some form of (reconstructed) divergence-free property is obtained by \eqref{eq:discrete}, namely: $\DT{k} \ulbmuT=\DT{k} \ulbmbT=0$ for all $T\in\Th$.
\end{remark}

\subsection{Main results}\label{sec:main.results}

We cite in this section the main results on the HHO approximation of \eqref{eq:variational}. Their proof is given in Section \ref{sec:proofs}. These results cover the existence of a solution to the HHO scheme, its convergence under general data (which provides as a by-product the existence of a weak solution to the MHD model) and, under a smallness assumption on the data, the uniqueness of the solution and an error estimate.

\begin{theorem}[Existence of a discrete solution]\label{thm:existence}
	There exists at least one solution $(\ulbmuh, \ulbmbh, q_h, r_h) \in \bmX_h^k$ to equations \eqref{eq:discrete.fluid}--\eqref{eq:discrete.magnetic.div}. Moreover, any solution $(\ulbmuh, \ulbmbh, q_h, r_h)$ satisfies the a priori bounds 
  \begin{equation}
	\label{eq:discrete.apriori.bound}
	\begin{aligned}
	\Brac{\nu_k \norm[1, h]{\ulbmuh}^2 + \nu_m \norm[1, h]{\ulbmbh}^2}^\frac12 \les\max(\nu_k,\nu_m)^{-\frac12}\left(\norm[\Omega]{\bmf}^2 + \norm[\Omega]{\bmg}^2\right)^\frac12,\\
		\norm[\Omega]{q_h} + \norm[\Omega]{r_h} \le{}& C\Brac{\norm[\Omega]{\bmf}^2 + \norm[\Omega]{\bmg}^2}^\frac12 \left(1+\Brac{\norm[\Omega]{\bmf}^2 + \norm[\Omega]{\bmg}^2}^\frac12\right),
	\end{aligned}
	\end{equation}
	where $C$ is independent of $h$, $\bmf$ and $\bmg$, but depends on $\nu_k$ and $\nu_m$.
\end{theorem}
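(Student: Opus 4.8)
The plan is to derive the energy a priori bound first---it holds for \emph{any} discrete solution and is also the engine of the existence proof---then to obtain existence by a Brouwer-type argument on the discretely divergence-free subspaces, and finally to bound the pressures via the inf-sup condition.

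\textbf{Energy bound.} Given a solution $(\ulbmuh,\ulbmbh,q_h,r_h)\in\bmX_h^k$, test \eqref{eq:discrete.fluid} with $\ulbmvh=\ulbmuh$ and \eqref{eq:discrete.magnetic} with $\ulbmwh=\ulbmbh$, and add. Choosing $s_h=q_h$ and $z_h=r_h$ in \eqref{eq:discrete.fluid.div}--\eqref{eq:discrete.magnetic.div} (legitimate since $q_h,r_h\in\POLY{k}_0(\Th)$) kills $\dh(\ulbmuh,q_h)$ and $\dh(\ulbmbh,r_h)$. The definition \eqref{eq:tT.def} makes $\tT$, hence $\th$, skew-symmetric in its last two arguments, so $\th(\ulbmuh,\ulbmuh,\ulbmuh)=\th(\ulbmuh,\ulbmbh,\ulbmbh)=0$; the two remaining cubic terms $-\th(\ulbmbh,\ulbmbh,\ulbmuh)$ (from the fluid equation) and $-\th(\ulbmbh,\ulbmuh,\ulbmbh)=+\th(\ulbmbh,\ulbmbh,\ulbmuh)$ (from the magnetic equation, again by skew-symmetry) cancel---this is the structural symmetry of the MHD system. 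One is left with $\nu_k\ah(\ulbmuh,\ulbmuh)+\nu_m\ah(\ulbmbh,\ulbmbh)=\brac[\Omega]{\bmf,\bmuh}+\brac[\Omega]{\bmg,\bmbh}$. Summing the coercivity estimate \eqref{eq:aT.norm.equivalence} over $T\in\Th$, invoking the discrete Poincar\'e--Sobolev--Wirtinger inequality of Lemma~\ref{lem:sob.poincare} to bound $\norm[\Omega]{\bmuh}\lesssim\norm[1,h]{\ulbmuh}$ and $\norm[\Omega]{\bmbh}\lesssim\norm[1,h]{\ulbmbh}$, and then using Young's inequality to absorb $\norm[1,h]{\ulbmuh}$, $\norm[1,h]{\ulbmbh}$ into the left-hand side, gives the first bound in \eqref{eq:discrete.apriori.bound}.

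\textbf{Existence.} Let $\bmV_h\defeq\{\ulbmvh\in\bmUhkzr:\dh(\ulbmvh,s_h)=0\ \forall s_h\in\POLY{k}_0(\Th)\}$ and $\bmW_h\defeq\{\ulbmwh\in\bmUhkn:\dh(\ulbmwh,z_h)=0\ \forall z_h\in\POLY{k}_0(\Th)\}$ be the discretely divergence-free subspaces. Solving \eqref{eq:discrete} is equivalent to first finding $(\ulbmuh,\ulbmbh)\in\bmV_h\times\bmW_h$ solving the reduced problem obtained by dropping the pressure terms and restricting the test functions to $\bmV_h$, resp.\ $\bmW_h$, and then recovering $q_h,r_h$: the residual linear forms $\ulbmvh\mapsto\brac[\Omega]{\bmf,\bmvh}-\nu_k\ah(\ulbmuh,\ulbmvh)-\th(\ulbmuh,\ulbmuh,\ulbmvh)+\th(\ulbmbh,\ulbmbh,\ulbmvh)$ on $\bmUhkzr$ and its magnetic analogue on $\bmUhkn$ vanish on $\bmV_h$, resp.\ $\bmW_h$, so by the inf-sup condition for $\dh$ (available on $\bmUhkzr$, hence a fortiori on the larger space $\bmUhkn$) they are of the form $\dh(\cdot,q_h)$, resp.\ $\dh(\cdot,r_h)$, for unique $q_h,r_h\in\POLY{k}_0(\Th)$. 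To solve the reduced problem, equip the finite-dimensional space $\bmV_h\times\bmW_h$ with the inner product $\ah\oplus\ah$ (indeed an inner product, by coercivity) and let $\Phi:\bmV_h\times\bmW_h\to\bmV_h\times\bmW_h$ be the field whose scalar product against $(\ulbmvh,\ulbmwh)$ equals the reduced fluid plus magnetic equations minus their right-hand sides. The map $\Phi$ is continuous (polynomial of degree two in its argument), and the computation of the energy step gives $\langle\Phi(\ulbmuh,\ulbmbh),(\ulbmuh,\ulbmbh)\rangle=\nu_k\ah(\ulbmuh,\ulbmuh)+\nu_m\ah(\ulbmbh,\ulbmbh)-\brac[\Omega]{\bmf,\bmuh}-\brac[\Omega]{\bmg,\bmbh}$, which is strictly positive as soon as $\norm[1,h]{\ulbmuh}^2+\norm[1,h]{\ulbmbh}^2$ exceeds a radius $R$ depending only on $\nu_k,\nu_m,\bmf,\bmg$. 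The standard corollary of Brouwer's fixed-point theorem then provides a zero of $\Phi$ inside that ball, i.e.\ a solution of the reduced problem and, after pressure recovery, of \eqref{eq:discrete}.

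\textbf{Pressure bound.} For a solution $(\ulbmuh,\ulbmbh,q_h,r_h)$, the inf-sup condition yields $\ulbmvh\in\bmUhkzr$ with $\norm[1,h]{\ulbmvh}\le1$ and $\norm[\Omega]{q_h}\lesssim\dh(\ulbmvh,q_h)$. Replacing $\dh(\ulbmvh,q_h)$ via \eqref{eq:discrete.fluid}, then bounding $\brac[\Omega]{\bmf,\bmvh}$ with Cauchy--Schwarz and the discrete Poincar\'e inequality, $\nu_k\ah(\ulbmuh,\ulbmvh)$ with \eqref{eq:aT.norm.equivalence}, and the cubic terms using $|\th(\cdot,\cdot,\cdot)|\lesssim\norm[1,h]{\cdot}\,\norm[1,h]{\cdot}\,\norm[1,h]{\cdot}$ (a consequence of \eqref{eq:GT.expanded}, boundedness of the reconstructions, and the discrete Sobolev embeddings of Lemma~\ref{lem:sob.poincare}), gives $\norm[\Omega]{q_h}\lesssim\norm[\Omega]{\bmf}+\nu_k\norm[1,h]{\ulbmuh}+\norm[1,h]{\ulbmuh}^2+\norm[1,h]{\ulbmbh}^2$. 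Substituting the energy bound for $\norm[1,h]{\ulbmuh}$ and $\norm[1,h]{\ulbmbh}$, and arguing identically for $r_h$ from \eqref{eq:discrete.magnetic} (the inf-sup test functions now taken in $\bmUhkn$), produces the second bound in \eqref{eq:discrete.apriori.bound}.

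\textbf{Main obstacle.} The only non-routine ingredient is the exact cancellation of the four cubic terms in the energy step, which relies on $\th$ being precisely skew-symmetric in its last two slots on the hybrid space---so one must check that the boundary contributions in \eqref{eq:GT.expanded}--\eqref{eq:tT.def} leave no spurious remainder. Everything else is standard HHO/saddle-point machinery: coercivity and boundedness of the reconstructed forms, the discrete Poincar\'e--Sobolev embeddings, the discrete inf-sup condition, and Brouwer's theorem.
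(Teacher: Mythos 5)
Your proposal is correct, and while the a priori bounds are obtained essentially as in the paper, your existence argument takes a genuinely different route. For the bounds you do in substance exactly what the paper does: test with the solution itself, kill the pressure terms through the divergence constraints and the cubic convective terms through the skew-symmetry of $\th$ (this is \eqref{eq:coer.rmAh} and \eqref{eq:skew.sym.rmTh}, which the paper exploits by applying the general stability Lemma~\ref{lem:stab.variational} with the zero solution as comparison point), then recover the pressure bound from the inf-sup condition \eqref{eq:dh.boundedness} and the boundedness of $\ah$ and $\th$. One citation slip: to bound $\brac[\Omega]{\bmg,\bmbh}$ you need the full Poincar\'e inequality \eqref{eq:discrete.poincare} on $\bmUhkn$ (Lemma~\ref{lem:discrete.poincare.sobolev}, which also requires control of the mean value via Lemma~\ref{lem:discrete.poincare}), not merely the Poincar\'e--Wirtinger inequality of Lemma~\ref{lem:sob.poincare}; the needed result is in the paper, so this is not a gap. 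For existence, the paper homotopies the full saddle-point system on $\bmX_h^k$ to a decoupled Stokes problem via the topological-degree Lemma~\ref{lem:topological.degree}, verifying the uniform bound in condition (ii) with the a priori estimates on the velocities, the magnetic field \emph{and} the pressures; you instead restrict to the discretely divergence-free subspaces, apply the standard Brouwer corollary there, and recover $q_h,r_h$ a posteriori. Both are sound. Your route spares you the pressure estimate at the existence stage, but it requires the surjectivity of $z_h\mapsto\dh(\cdot,z_h)$ onto the annihilator of the divergence-free subspace (a dimension count from the inf-sup condition, which, as you correctly note, is available on $\bmUhkn$ by the second inequality in \eqref{eq:dh.boundedness}); the paper's route avoids the reduction entirely at the price of the degree argument and of needing the pressure bound up front. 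The cancellation you flag as the main obstacle holds exactly as you state, since $\tT$ in \eqref{eq:tT.def} is antisymmetrised in its last two arguments by construction.
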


\begin{theorem}[Convergence of the HHO scheme]\label{thm:convergence}
Let, for each $h\in\mathcal{H}$, $(\ulbmuh, \ulbmbh, q_h, r_h) \in \bmX_h^k$ be a solution to the HHO scheme \eqref{eq:discrete.fluid}--\eqref{eq:discrete.magnetic.div}. There exists a solution $(\bmu, \bmb, q, r)\in\bmU\times\bmB\times P\times P$ to the continuous problem \eqref{eq:variational} such that, along a subsequence as $h\to 0$,
\begin{align*}
\bmuh\to \bmu\mbox{ and }\bmbh\to\bmb&\quad\mbox{ in $L^s(\Omega)^3$ for all $s\in[1,6)$},\\
\nabla_h \rh{k+1}\ulbmuh\to \nabla\bmu\mbox{ and }\nabla_h \rh{k+1}\ulbmbh\to\nabla\bmb&\quad\mbox{ in $L^2(\Omega)^{3\times 3}$},\\
q_h\to q\mbox{ and }r_h\to r&\quad\mbox{ in $L^2(\Omega)$},
\end{align*}
where, for $\ulbmzh\in\bmUhkzr$ or $\ulbmzh\in \bmUhkn$, $\rh{k+1}\ulbmzh:\Omega\to\R^3$ denotes the piecewise polynomial function defined by $(\rh{k+1}\ulbmzh)_{|T}=\rT{k+1}\ulbmzT$ for all $T\in\Th$, and $\nabla_h$ is the broken gradient.
\end{theorem}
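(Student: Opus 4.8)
The plan is to run the classical compactness argument for polytopal discretisations of nonlinear problems (in the spirit of the convergence analysis of gradient schemes for the Navier--Stokes equations). First I would invoke Theorem~\ref{thm:existence} to get bounds on $\norm[1,h]{\ulbmuh}$, $\norm[1,h]{\ulbmbh}$, $\norm[\Omega]{q_h}$ and $\norm[\Omega]{r_h}$ that are uniform in $h$, and then apply the discrete Sobolev embeddings and a discrete Rellich--Kondrachov compactness theorem (resting on the Poincar\'e--Sobolev--Wirtinger inequality of Lemma~\ref{lem:sob.poincare}), together with $\norm[T]{\rT{k+1}\ulbmvT-\bmvT}\lesssim\hT\norm[1,T]{\ulbmvT}$, to extract a subsequence along which: $\bmuh\to\bmu$ and $\rh{k+1}\ulbmuh\to\bmu$ strongly in $L^s(\Omega)^3$ for all $s\in[1,6)$ (and likewise $\bmbh,\rh{k+1}\ulbmbh\to\bmb$); $\nabla_h\rh{k+1}\ulbmuh\rightharpoonup\nabla\bmu$ and $\nabla_h\rh{k+1}\ulbmbh\rightharpoonup\nabla\bmb$ weakly in $L^2(\Omega)^{3\times3}$; and $q_h\rightharpoonup q$, $r_h\rightharpoonup r$ weakly in $L^2(\Omega)$. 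The zero face values on $\partial\Omega$ in $\bmUhkzr$ (resp.\ the zero normal face values in $\bmUhkn$) force $\bmu\in\bmU$ (resp.\ $\bmb\in\bmB$) in the limit, and the zero-mean constraints pass to the limit, so $q,r\in P$.

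Next I would show that $(\bmu,\bmb,q,r)$ solves \eqref{eq:variational} by passing to the limit in the scheme tested against interpolants of smooth functions. Fix $\bmphi\in C^\infty_c(\Omega)^3$, $\bmpsi\in C^\infty(\overline{\Omega})^3$ with $\bmpsi\cdot\bmn=0$ on $\partial\Omega$, and $s,z\in C^\infty(\overline{\Omega})$ with zero mean, and take $\ulbmvh=\bmIhk\bmphi$, $\ulbmwh=\bmIhk\bmpsi$, $s_h=\pihzr{k}s$, $z_h=\pihzr{k}z$ in \eqref{eq:discrete}. For the diffusion terms, $\ah(\ulbmuh,\bmIhk\bmphi)=\brac[\Omega]{\nabla_h\rh{k+1}\ulbmuh,\nabla_h\rh{k+1}\bmIhk\bmphi}+\sum_{T\in\Th}\sT(\ulbmuT,\bmITk\bmphi)$; since $\rh{k+1}\bmIhk\bmphi=\bmpihe{k+1}\bmphi\to\bmphi$ in $H^1$ by \eqref{eq:rT.and.DT.commutation}, the reconstruction part converges (weak times strong) to $\rma(\bmu,\bmphi)$, and the stabilisation part tends to $0$ by Cauchy--Schwarz together with \eqref{eq:aT.norm.equivalence}, \eqref{eq:sT.consistency} and the a priori bound. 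For the pressure couplings, \eqref{eq:rT.and.DT.commutation} gives $\dh(\bmIhk\bmphi,q_h)=-\brac[\Omega]{\nabla\cdot\bmphi,q_h}\to\rmd(\bmphi,q)$. For the discrete constraints \eqref{eq:discrete.fluid.div}--\eqref{eq:discrete.magnetic.div}, I would note (cf.\ Remark~\ref{rem:hybrid.spaces}) that testing against all zero-mean piecewise polynomials and a mean-value computation force $\DT{k}\ulbmuT=\DT{k}\ulbmbT=0$ on each $T$; plugging $q=\piTzr{k}\psi$ into \eqref{eq:DT.def}, summing over $T\in\Th$, cancelling the interior-face contributions via the single-valued face unknowns, and bounding the remainder by $\sum_T\norm[\partial T]{\bmuT-\bmuFT}\,\norm[\partial T]{\psi-\piTzr{k}\psi}\lesssim h\,\norm[1,h]{\ulbmuh}$, one gets $\int_\Omega\bmu\cdot\nabla\psi=0$ for all smooth $\psi$ (compactly supported for $\bmu$, general for $\bmb$, which also re-derives $\bmb\cdot\bmn=0$ on $\partial\Omega$); density then yields \eqref{eq:variational.fluid.div}--\eqref{eq:variational.magnetic.div}. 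The right-hand sides converge since $\bmpihzr{k}\bmphi\to\bmphi$ and $\bmpihzr{k}\bmpsi\to\bmpsi$ in $L^2$.

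The main obstacle is the passage to the limit in the convective terms. Writing $\Gh{2k}$ for the broken (element-wise) gradient reconstruction $\GT{2k}$ and $(\bmv\cdot\Gh{2k})\cdot$ for the associated convective operator of \eqref{eq:tT.def}, I would first establish the limit-conformity $\Gh{2k}\ulbmuh\rightharpoonup\nabla\bmu$ and $\Gh{2k}\ulbmbh\rightharpoonup\nabla\bmb$ weakly in $L^2(\Omega)^{3\times3}$ (by the same telescoping and interior-face cancellation mechanism as above, together with the boundedness $\norm[T]{\GT{2k}\ulbmvT}\lesssim\norm[1,T]{\ulbmvT}$, which must hold robustly under the mesh regularity of Assumption~\ref{assum:star.shaped}, in particular with arbitrarily small faces), as well as the consistency $\Gh{2k}\bmIhk\bmphi\to\nabla\bmphi$ in $L^2$. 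Then each convective term passes to the limit by a weak--strong argument: for instance $\th(\ulbmuh,\ulbmuh,\bmIhk\bmphi)=\tfrac12\int_\Omega(\bmuh\cdot\Gh{2k}\ulbmuh)\cdot\bmpihzr{k}\bmphi-\tfrac12\int_\Omega(\bmuh\cdot\Gh{2k}\bmIhk\bmphi)\cdot\bmuh$, and the strong $L^4(\Omega)^3$ convergence of $\bmuh$ (legitimate since $4<6$), combined with the weak $L^2$ convergence of $\Gh{2k}\ulbmuh$ and the strong convergences above, gives the limit $\tfrac12\rmt(\bmu,\bmu,\bmphi)-\tfrac12\rmt(\bmu,\bmphi,\bmu)$; the already-established $\nabla\cdot\bmu=0$ and $\bmu\cdot\bmn=0$ then yield $\rmt(\bmu,\bmphi,\bmu)=-\rmt(\bmu,\bmu,\bmphi)$, so the limit is exactly $\rmt(\bmu,\bmu,\bmphi)$. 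The same reasoning handles $\th(\ulbmbh,\ulbmbh,\bmIhk\bmphi)$, $\th(\ulbmuh,\ulbmbh,\bmIhk\bmpsi)$ and $\th(\ulbmbh,\ulbmuh,\bmIhk\bmpsi)$ (now using $\nabla\cdot\bmb=0$ and $\bmb\cdot\bmn=0$). The heaviest points are the robust limit-conformity and boundedness of the gradient reconstruction under small faces, and the careful tracking of which factors converge weakly and which strongly. Once all terms are handled, $(\bmu,\bmb,q,r)$ satisfies \eqref{eq:variational.fluid}--\eqref{eq:variational.magnetic} against the smooth test functions, hence against all of $\bmU$ and $\bmB$ by density, so it solves \eqref{eq:variational} — which in particular proves existence of a weak solution.

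Finally, to upgrade the weak convergence of the reconstructed gradients to strong convergence, I would use a discrete energy identity. Testing \eqref{eq:discrete.fluid} with $\ulbmuh$ and \eqref{eq:discrete.magnetic} with $\ulbmbh$, and using $\th(\ulbmzh,\ulbmzh,\ulbmzh)=0$, $\dh(\ulbmuh,q_h)=\dh(\ulbmbh,r_h)=0$ (from \eqref{eq:discrete.fluid.div}--\eqref{eq:discrete.magnetic.div}, since $q_h,r_h$ have zero mean), and the skew-symmetry $\th(\ulbmbh,\ulbmuh,\ulbmbh)=-\th(\ulbmbh,\ulbmbh,\ulbmuh)$, one obtains $\nu_k\ah(\ulbmuh,\ulbmuh)+\nu_m\ah(\ulbmbh,\ulbmbh)=\brac[\Omega]{\bmf,\bmuh}+\brac[\Omega]{\bmg,\bmbh}$. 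The right-hand side converges to $\brac[\Omega]{\bmf,\bmu}+\brac[\Omega]{\bmg,\bmb}$, which equals $\nu_k\rma(\bmu,\bmu)+\nu_m\rma(\bmb,\bmb)$ by the corresponding continuous energy identity (test \eqref{eq:variational.fluid}--\eqref{eq:variational.magnetic} with $\bmu$ and $\bmb$, the trilinear contributions cancelling exactly as in the discrete case). Since $\ah(\ulbmvh,\ulbmvh)\ge\norm[\Omega]{\nabla_h\rh{k+1}\ulbmvh}^2$, the stabilisations are nonnegative, and weak lower semicontinuity gives $\liminf_h\norm[\Omega]{\nabla_h\rh{k+1}\ulbmuh}^2\ge\norm[\Omega]{\nabla\bmu}^2=\rma(\bmu,\bmu)$ (and likewise for $\bmb$), it follows that $\sum_{T\in\Th}\sT(\ulbmuT,\ulbmuT)\to0$, $\sum_{T\in\Th}\sT(\ulbmbT,\ulbmbT)\to0$, and $\norm[\Omega]{\nabla_h\rh{k+1}\ulbmuh}\to\norm[\Omega]{\nabla\bmu}$, $\norm[\Omega]{\nabla_h\rh{k+1}\ulbmbh}\to\norm[\Omega]{\nabla\bmb}$. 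Together with the weak convergence from the first step, these norm convergences give the claimed strong convergence of $\nabla_h\rh{k+1}\ulbmuh$ and $\nabla_h\rh{k+1}\ulbmbh$ in $L^2(\Omega)^{3\times3}$.
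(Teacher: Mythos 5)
Your overall strategy coincides with the one used in the paper: a priori bounds from Theorem \ref{thm:existence}, discrete Rellich compactness (the paper packages the magnetic-field case as Theorem \ref{th:discrete.rellich} on $\bmUhkn$, which also delivers the trace convergence needed to get $\bmb\cdot\bmn=0$ in the limit), passage to the limit in the scheme against interpolants of smooth test functions plus a density argument, and finally the energy identity $\nu_k\rma_h(\ulbmuh,\ulbmuh)+\nu_m\rma_h(\ulbmbh,\ulbmbh)=(\bmf,\bmuh)_\Omega+(\bmg,\bmbh)_\Omega$ combined with $\rma_h(\ulbmvh,\ulbmvh)\ge\norm[\Omega]{\nabla_h\rh{k+1}\ulbmvh}^2$ and weak lower semicontinuity to upgrade the gradient convergence to strong. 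The details you supply for the convective terms (weak limit of $\Gh{2k}\ulbmuh$, strong $L^4$ convergence of $\bmuh$, then the continuous skew-symmetry to recombine the two halves of $\th$) are exactly what the paper delegates to Step 2 of the cited Navier--Stokes convergence proof, so that part is sound.

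There is, however, one genuine gap: the theorem asserts \emph{strong} $L^2(\Omega)$ convergence of $q_h$ and $r_h$, but your argument only ever produces weak convergence, and your final "upgrade" paragraph addresses the reconstructed gradients only. Strong convergence of the pressures does not follow from what you have established: weak convergence of $q_h$ together with strong convergence of the velocities and gradients gives no control on $\limsup_h\norm[\Omega]{q_h}$. The missing step (the paper's last paragraph, following Step 4 of the cited proof) is to introduce divergence liftings $(\bmv_q,\bmw_r)\in\bmU\times\bmB$ with $\DIV\bmv_q=q$, $\DIV\bmw_r=r$ and $\norm[\HONE(\Omega)^3]{\bmv_q}\lesssim\norm[\Omega]{q}$, $\norm[\HONE(\Omega)^3]{\bmw_r}\lesssim\norm[\Omega]{r}$, test \eqref{eq:discrete.fluid} with $\bmIhk\bmv_q$ and \eqref{eq:discrete.magnetic} with $\bmIhk\bmw_r$, and use the commutation property \eqref{eq:rT.and.DT.commutation} (so that $\dh(\bmIhk\bmv_q,q_h)=-\int_\Omega q\,q_h$) together with the already-established convergences of all the other terms to identify the limit of the pressure norms; note that the alternative route via the inf--sup bound \eqref{eq:dh.boundedness} and the consistency estimate \eqref{eq:dh.consistency} would require $q,r\in\HONE(\Omega)$, which is not available for a general weak solution, so the lifting argument is really needed here. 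Everything else in your proposal matches the paper's proof.
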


\begin{theorem}[Uniqueness of the discrete solution]\label{thm:uniqueness}
	The discrete problem \eqref{eq:discrete.fluid}--\eqref{eq:discrete.magnetic.div} admits a unique solution $(\ulbmuh, \ulbmbh, q_h, r_h) \in \bmX_h^k$ provided the source terms $\bmf$ and $\bmg$ satisfy, for some $\chi\in[0, 1)$,
	\begin{equation}\label{eq:data.smallness}
		\Brac{\norm[\Omega]{\bmf}^2 + \norm[\Omega]{\bmg}^2}^\frac12 \le \chi\frac{\min(\nu_k, \nu_m)^2}{\sqrt{2}C_{\rma}^2C_{\rmt}C_p},
	\end{equation}
  where $C_{\rma}$, $C_{\rmt}$ and $C_p$ are the constants appearing in \eqref{eq:ah.stab.and.bound}, \eqref{eq:th.boundedness}, and  \eqref{eq:discrete.poincare} below.
\end{theorem}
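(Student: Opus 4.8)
The plan is to run the classical small-data uniqueness argument for Navier--Stokes-type problems, adapted to the coupled MHD system: subtract two solutions, test the difference equations against the difference itself, exploit the skew-symmetry of $\th$ in its last two arguments together with the discrete divergence constraints to annihilate all the ``bad'' terms, and close the estimate with the a priori bound and the coercivity of $\ah$; finally recover equality of the pressures through the discrete inf-sup condition.

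Concretely, let $(\ulbmuh^1,\ulbmbh^1,q_h^1,r_h^1)$ and $(\ulbmuh^2,\ulbmbh^2,q_h^2,r_h^2)$ be two solutions of \eqref{eq:discrete.fluid}--\eqref{eq:discrete.magnetic.div} and set $\ulbmuh\defeq\ulbmuh^1-\ulbmuh^2\in\bmUhkzr$, $\ulbmbh\defeq\ulbmbh^1-\ulbmbh^2\in\bmUhkn$, $q_h\defeq q_h^1-q_h^2$ and $r_h\defeq r_h^1-r_h^2$ in $\POLY{k}_0(\Th)$ (the differences belong to these homogeneous spaces since the latter are linear). By Theorem \ref{thm:existence} (or, for the explicit constants, by the testing argument in its proof: test \eqref{eq:discrete.fluid} with $\ulbmuh^i$, \eqref{eq:discrete.magnetic} with $\ulbmbh^i$, use that the trilinear contributions cancel and the divergence pairings vanish, then apply \eqref{eq:ah.stab.and.bound} and \eqref{eq:discrete.poincare}), each solution satisfies $M_i\defeq(\norm[1,h]{\ulbmuh^i}^2+\norm[1,h]{\ulbmbh^i}^2)^{1/2}\le \frac{C_{\rma}C_p}{\min(\nu_k,\nu_m)}(\norm[\Omega]{\bmf}^2+\norm[\Omega]{\bmg}^2)^{1/2}$.

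\textbf{Equality of velocity and magnetic field.} Subtract the two fluid equations and test against $\ulbmvh=\ulbmuh$; subtract the two magnetic equations and test against $\ulbmwh=\ulbmbh$; add. The pairings $\dh(\ulbmuh,q_h)$ and $\dh(\ulbmbh,r_h)$ vanish by \eqref{eq:discrete.fluid.div}--\eqref{eq:discrete.magnetic.div}. Expanding each quadratic trilinear difference by bilinearity, the ``diagonal'' terms $\th(\ulbmuh^2,\ulbmuh,\ulbmuh)$ and $\th(\ulbmuh^2,\ulbmbh,\ulbmbh)$ vanish by the skew-symmetry of $\th$ in its last two arguments (which is built into \eqref{eq:tT.def}), and the two surviving $\ulbmbh^2$-terms $\th(\ulbmbh^2,\ulbmbh,\ulbmuh)$ and $\th(\ulbmbh^2,\ulbmuh,\ulbmbh)$ cancel each other by the same skew-symmetry. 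One is left with $\nu_k\ah(\ulbmuh,\ulbmuh)+\nu_m\ah(\ulbmbh,\ulbmbh)=-\th(\ulbmuh,\ulbmuh^1,\ulbmuh)+\th(\ulbmbh,\ulbmbh^1,\ulbmuh)-\th(\ulbmuh,\ulbmbh^1,\ulbmbh)+\th(\ulbmbh,\ulbmuh^1,\ulbmbh)$. Using the boundedness \eqref{eq:th.boundedness} of $\th$, then $2\norm[1,h]{\ulbmuh}\norm[1,h]{\ulbmbh}\le\norm[1,h]{\ulbmuh}^2+\norm[1,h]{\ulbmbh}^2$ and $\norm[1,h]{\ulbmuh^1}+\norm[1,h]{\ulbmbh^1}\le\sqrt{2}\,M_1$, the right-hand side is at most $\sqrt{2}\,C_{\rmt}M_1(\norm[1,h]{\ulbmuh}^2+\norm[1,h]{\ulbmbh}^2)$, while the left-hand side is at least $C_{\rma}^{-1}\min(\nu_k,\nu_m)(\norm[1,h]{\ulbmuh}^2+\norm[1,h]{\ulbmbh}^2)$ by coercivity \eqref{eq:ah.stab.and.bound}. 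If the difference were nonzero, dividing by $\norm[1,h]{\ulbmuh}^2+\norm[1,h]{\ulbmbh}^2>0$ and inserting the bound on $M_1$ gives $(\norm[\Omega]{\bmf}^2+\norm[\Omega]{\bmg}^2)^{1/2}\ge\frac{\min(\nu_k,\nu_m)^2}{\sqrt{2}\,C_{\rma}^2C_{\rmt}C_p}$, contradicting \eqref{eq:data.smallness} (which, with $\chi<1$, forces the reverse strict inequality). Hence $\ulbmuh=\bm{0}$ and $\ulbmbh=\bm{0}$, i.e.\ $\ulbmuh^1=\ulbmuh^2$ and $\ulbmbh^1=\ulbmbh^2$.

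\textbf{Equality of pressures, and the main obstacle.} With $\ulbmuh^1=\ulbmuh^2$ and $\ulbmbh^1=\ulbmbh^2$, subtracting the two fluid equations leaves $\dh(\ulbmvh,q_h)=0$ for all $\ulbmvh\in\bmUhkzr$, so the discrete inf-sup condition for $(\bmUhkzr,\POLY{k}_0(\Th))$ used in the proof of Theorem \ref{thm:existence} yields $q_h=0$; likewise, subtracting the magnetic equations gives $\dh(\ulbmwh,r_h)=0$ for all $\ulbmwh\in\bmUhkn$, and the corresponding inf-sup bound gives $r_h=0$. The delicate step is the nonlinear bookkeeping in the second part: one must expand both quadratic differences correctly, verify precisely which products vanish by the skew-symmetry of $\th$, and apply Young's inequality so as to produce exactly the combinatorial factor $\sqrt{2}$ that matches the denominator in \eqref{eq:data.smallness}; the rest is a routine combination of the a priori bound, the stability/boundedness of $\ah$, the boundedness of $\th$, and the discrete inf-sup condition.
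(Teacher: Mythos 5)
Your proof is correct and follows essentially the same route as the paper: the paper packages the subtract-test-cancel energy argument into a general stability lemma (Lemma \ref{lem:stab.variational}, applied with $\rmF_h^\sharp=\rmF_h^\flat$) and then only needs to check that \eqref{eq:data.smallness} implies the dual-norm smallness condition via the discrete Poincar\'e inequality, but the cancellations by skew-symmetry, the $\sqrt{2}C_{\rmt}$ bound on the trilinear remainder, the a priori bound $M_1\le C_{\rma}C_p\min(\nu_k,\nu_m)^{-1}(\norm[\Omega]{\bmf}^2+\norm[\Omega]{\bmg}^2)^{1/2}$, and the inf-sup recovery of the pressures are all identical to yours. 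No gaps.
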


\begin{theorem}[Energy error estimate for the HHO scheme]\label{thm:discrete.energy.error}
	Suppose that the source terms $\bmf$, $\bmg$ satisfy the data smallness condition \eqref{eq:data.smallness} with parameter $\chi\in[0,1)$. 
	Let $(\bmu, \bmb, q, r)\in\bmU\times\bmB\times P\times P$ solve the continuous problem \eqref{eq:variational}, and let $(\ulbmuh, \ulbmbh, q_h, r_h)\in\bmX_h^k$ be the unique solution to the discrete problem \eqref{eq:discrete}. Moreover, assume the additional regularity $\bmu,\bmb\in\WSP{1,4}(\Omega)^3\cap\HS{k+2}(\Th)^3$ and $q,r\in\HONE(\Omega)\cap\HS{k+1}(\Th)$. Then, setting
	\begin{align*}
	\mathcal N(\bmu,\bmb,q,r):={}&\nu_k\seminorm[\HS{k+2}(\Th)^3]{\bmu} + \nu_m\seminorm[\HS{k+2}(\Th)^3]{\bmb} + \norm[\WSP{1, 4}(\Omega)^3]{\bmu}\seminorm[\WSP{k+1, 4}(\Th)^3]{\bmu} + \norm[\WSP{1, 4}(\Omega)^3]{\bmb}\seminorm[\WSP{k+1, 4}(\Th)^3]{\bmb} \\
		& + \norm[\WSP{1, 4}(\Omega)^3]{\bmu}\seminorm[\WSP{k+1, 4}(\Th)^3]{\bmb} + \norm[\WSP{1, 4}(\Omega)^3]{\bmb}\seminorm[\WSP{k+1, 4}(\Th)^3]{\bmu} +  \seminorm[\HS{k+1}(\Th)]{q} + \seminorm[\HS{k+1}(\Th)]{r},
	\end{align*}
	the following error estimates hold:
	\begin{align}\label{eq:discrete.energy.error.ub}
		&(1-\chi)\norm[\rma,\rma,h]{(\ulbmuh - \bmIhk\bmu,\ulbmbh - \bmIhk\bmb)} \lesssim  h^{k+1}\calN(\bmu,\bmb,q,r),\\
		\label{eq:discrete.energy.error.qr}
		&(1-\chi)\Brac{\norm[\Omega]{q_h - \pihzr{k}q}+\norm[\Omega]{r_h - \pihzr{k}r}} \le C h^{k+1}\calN(\bmu,\bmb,q,r)\Brac{1+\Brac{\norm[\Omega]{\bmf}^2+\norm[\Omega]{\bmg}^2}^\frac12},
	\end{align}	
	where $C$ is independent of $h$, $\bmf$ and $\bmg$, but depends on $\nu_k$ and $\nu_m$, $\pihzr{k}$ is the $L^2$-projector on $\POLY{k}(\Th)$, and the energy-like norm $\norm[\rma,\rma,h]{(\cdot,\cdot)}:\bmUhkzr\times\bmUhkn\to\R$ is defined as	
	\begin{equation}\label{eq:aah.norm.def}
	\norm[\rma,\rma,h]{(\ulbmvh,\ulbmwh)}^2\defeq\nu_k\rma_h(\ulbmvh,\ulbmvh)+\nu_m\rma_h(\ulbmwh,\ulbmwh).
	\end{equation}
\end{theorem}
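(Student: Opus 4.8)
\emph{Strategy and error equations.} The plan is to follow the standard HHO route for nonlinear saddle‑point problems: write an energy error equation by testing against the difference between the discrete solution and the interpolate/projection of the exact solution, estimate the consistency defect of each discrete form, and close the argument by absorbing the trilinear contributions using the smallness condition \eqref{eq:data.smallness}. Since $\bmu\in\HONEzr(\Omega)^3$ and $\bmb\cdot\bmn=0$ on $\partial\Omega$ (each face normal being constant on its planar face), we have $\bmIhk\bmu\in\bmUhkzr$ and $\bmIhk\bmb\in\bmUhkn$, while $\pihzr{k}q,\pihzr{k}r\in\POLY{k}_0(\Th)$ because $q,r$ have zero mean. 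Set $\ulbm{e}_h^{\bmu}:=\ulbmuh-\bmIhk\bmu$, $\ulbm{e}_h^{\bmb}:=\ulbmbh-\bmIhk\bmb$, $\delta_q:=q_h-\pihzr{k}q$, $\delta_r:=r_h-\pihzr{k}r$. Subtracting from \eqref{eq:discrete.fluid}--\eqref{eq:discrete.magnetic} the same expressions evaluated at $(\bmIhk\bmu,\bmIhk\bmb,\pihzr{k}q,\pihzr{k}r)$ and expanding the trilinear differences by trilinearity of $\th$, one obtains, for all $\ulbmvh\in\bmUhkzr$ and $\ulbmwh\in\bmUhkn$, equations of the form $\nu_k\ah(\ulbm{e}_h^{\bmu},\ulbmvh)+[\text{trilinear perturbations}]+\dh(\ulbmvh,\delta_q)=\calE_h^{\bmu}(\ulbmvh)$ and $\nu_m\ah(\ulbm{e}_h^{\bmb},\ulbmwh)+[\text{trilinear perturbations}]+\dh(\ulbmwh,\delta_r)=\calE_h^{\bmb}(\ulbmwh)$, where each bracket is a sum of terms $\th(\cdot,\cdot,\cdot)$ having at least one argument equal to $\ulbm{e}_h^{\bmu}$ or $\ulbm{e}_h^{\bmb}$ and the others equal to $\ulbmuh$, $\ulbmbh$, $\bmIhk\bmu$ or $\bmIhk\bmb$, and where $\calE_h^{\bmu}$, $\calE_h^{\bmb}$ are the defects obtained by inserting the interpolated exact solution into the scheme, e.g. $\calE_h^{\bmu}(\ulbmvh)=\brac[\Omega]{\bmf,\bmvh}-\nu_k\ah(\bmIhk\bmu,\ulbmvh)-\th(\bmIhk\bmu,\bmIhk\bmu,\ulbmvh)+\th(\bmIhk\bmb,\bmIhk\bmb,\ulbmvh)-\dh(\ulbmvh,\pihzr{k}q)$.

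\emph{Consistency.} I would show $|\calE_h^{\bmu}(\ulbmvh)|+|\calE_h^{\bmb}(\ulbmwh)|\lesssim h^{k+1}\calN(\bmu,\bmb,q,r)\big(\norm[1,h]{\ulbmvh}+\norm[1,h]{\ulbmwh}\big)$ by inserting the strong form \eqref{eq:mhd} into $\brac[\Omega]{\bmf,\bmvh}$, $\brac[\Omega]{\bmg,\bmwh}$ and splitting the defect term by term. The Laplacian parts use the standard HHO diffusion consistency built on \eqref{eq:rT.def}, the commutation \eqref{eq:rT.and.DT.commutation}, the approximation of the elliptic projector and the stabilisation consistency \eqref{eq:sT.consistency}; the boundary‑face contributions on $\partial\Omega$ vanish because the face components of $\ulbmvh$ are zero there, resp.\ those of $\ulbmwh$ have zero normal component, which combined with $\bmb\cdot\bmn=0$ and \eqref{eq:bdry.cancellation.polyhedral}--\eqref{eq:mhd.bc.curlb} annihilates $\bmw\cdot(\bmn\cdot\nabla)\bmb$; this yields the $\nu_k\seminorm[\HS{k+2}(\Th)^3]{\bmu}$ and $\nu_m\seminorm[\HS{k+2}(\Th)^3]{\bmb}$ terms of $\calN$. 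The pressure‑gradient parts use the divergence‑reconstruction consistency via \eqref{eq:DT.def}, \eqref{eq:rT.and.DT.commutation} and single‑valuedness of $\nabla q,\nabla r$ across interfaces, giving the $\seminorm[\HS{k+1}(\Th)]{q}$, $\seminorm[\HS{k+1}(\Th)]{r}$ terms. Finally the convective parts $(\bmu\cdot\nabla)\bmu$, $(\bmb\cdot\nabla)\bmb$, $(\bmu\cdot\nabla)\bmb$, $(\bmb\cdot\nabla)\bmu$ are compared with the discrete trilinear forms after rewriting those via \eqref{eq:GT.expanded}, the local projection errors being bounded by H\"older's inequality with exponents $(4,4,2)$ together with the $L^4$ approximation properties of $\bmpiTzr{k}$, $\bmpiFTzr{k}$; this is where the $\WSP{1,4}\cap\WSP{k+1,4}$ regularity is used and the four mixed $\WSP{1,4}$--$\WSP{k+1,4}$ terms of $\calN$ appear.

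\emph{Energy estimate and pressure estimate.} Testing the fluid error equation with $\ulbm{e}_h^{\bmu}$ and the magnetic one with $\ulbm{e}_h^{\bmb}$, the pressure terms $\dh(\ulbm{e}_h^{\bmu},\delta_q)$ and $\dh(\ulbm{e}_h^{\bmb},\delta_r)$ vanish since $\DT{k}\bmIhk\bmu=\pihzr{k}(\DIV\bmu)=\bm 0$, $\DT{k}\bmIhk\bmb=\bm 0$, $\delta_q,\delta_r\in\POLY{k}_0(\Th)$ and \eqref{eq:discrete.fluid.div}--\eqref{eq:discrete.magnetic.div} hold. Using that $\th$ is skew‑symmetric in its last two arguments (which kills $\th(\cdot,\ulbm{e}_h^{\bmu},\ulbm{e}_h^{\bmu})$, $\th(\cdot,\ulbm{e}_h^{\bmb},\ulbm{e}_h^{\bmb})$ and makes the cubic coupling terms cancel pairwise) and adding the two equations, the remaining trilinear coupling terms cancel exactly as in the continuous MHD energy identity, leaving
\[
\norm[\rma,\rma,h]{(\ulbm{e}_h^{\bmu},\ulbm{e}_h^{\bmb})}^2 + \mathcal R = \calE_h^{\bmu}(\ulbm{e}_h^{\bmu}) + \calE_h^{\bmb}(\ulbm{e}_h^{\bmb}),\quad \mathcal R:=\th(\ulbm{e}_h^{\bmu},\bmIhk\bmu,\ulbm{e}_h^{\bmu})-\th(\ulbm{e}_h^{\bmb},\bmIhk\bmu,\ulbm{e}_h^{\bmb})-\th(\ulbm{e}_h^{\bmb},\bmIhk\bmb,\ulbm{e}_h^{\bmu})+\th(\ulbm{e}_h^{\bmu},\bmIhk\bmb,\ulbm{e}_h^{\bmb}).
\]
By the boundedness \eqref{eq:th.boundedness} of $\th$, the $H^1$‑boundedness of $\bmIhk$ ($\norm[1,h]{\bmIhk\bmu}\lesssim\seminorm[\HONE]{\bmu}$, $\norm[1,h]{\bmIhk\bmb}\lesssim\seminorm[\HONE]{\bmb}$), the norm equivalence \eqref{eq:ah.stab.and.bound}, and the continuous a priori bound on $(\seminorm[\HONE]{\bmu},\seminorm[\HONE]{\bmb})$ in terms of $(\norm[\Omega]{\bmf},\norm[\Omega]{\bmg})$, the constant in \eqref{eq:data.smallness} is calibrated precisely so that $|\mathcal R|\le\chi\,\norm[\rma,\rma,h]{(\ulbm{e}_h^{\bmu},\ulbm{e}_h^{\bmb})}^2$; combined with the consistency bound this gives \eqref{eq:discrete.energy.error.ub}. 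For the pressure, invoke the discrete inf‑sup stability of $\dh$ on $\bmUhkzr\times\POLY{k}_0(\Th)$ (from the well‑posedness analysis underlying Theorem \ref{thm:existence}) to pick $\ulbmvh\in\bmUhkzr$ with $\norm[1,h]{\ulbmvh}\lesssim 1$ and $\dh(\ulbmvh,\delta_q)\gtrsim\norm[\Omega]{\delta_q}$, plug it into the fluid error equation, and bound the remaining terms by \eqref{eq:th.boundedness}, the a priori bounds $\norm[1,h]{\ulbmuh}+\norm[1,h]{\ulbmbh}\lesssim 1+(\norm[\Omega]{\bmf}^2+\norm[\Omega]{\bmg}^2)^{1/2}$ from \eqref{eq:discrete.apriori.bound}, the energy bound just obtained, and the consistency bound of the previous step; the trilinear terms produce the factor $1+(\norm[\Omega]{\bmf}^2+\norm[\Omega]{\bmg}^2)^{1/2}$ in \eqref{eq:discrete.energy.error.qr}. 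The same argument on the magnetic error equation, with inf‑sup stability of $\dh$ on $\bmUhkn\times\POLY{k}_0(\Th)$, bounds $\norm[\Omega]{\delta_r}$.

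\emph{Main obstacle.} The technical heart is the trilinear consistency estimate: one must re‑express $\th(\bmIhk\bmu,\bmIhk\bmu,\ulbmvh)$ and its magnetic/cross analogues through \eqref{eq:GT.expanded}, isolate the local $L^4$ projection errors, and bound them uniformly in the mesh (including for small faces, via the mesh‑robust estimates of \cite{droniou.yemm:2021:robust} and Lemma \ref{lem:sob.poincare}) — which is exactly what forces the $\WSP{1,4}$ hypothesis and the precise shape of $\calN$. A secondary, bookkeeping‑heavy difficulty is checking that the constant in \eqref{eq:data.smallness}, together with the continuous a priori bound, makes the absorption of $\mathcal R$ produce exactly the factor $1-\chi$ appearing in the final estimates.
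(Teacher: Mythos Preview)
Your approach is essentially the paper's: the paper packages the argument into an abstract stability lemma (Lemma~\ref{lem:stab.variational}) applied with $\underline{x}_h^\sharp=(\ulbmuh,\ulbmbh,q_h,r_h)$ and $\underline{x}_h^\flat=(\bmIhk\bmu,\bmIhk\bmb,\pihzr{k}q,\pihzr{k}r)$, but the mechanics (error equation, skew-symmetry to kill half the trilinear terms, absorb the rest via smallness, inf--sup for the pressures, and the three consistency estimates \eqref{eq:ah.consistency}, \eqref{eq:dh.consistency}, \eqref{eq:th.consistency}) are exactly what you describe.

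There is, however, one genuine slip in your bookkeeping. You split the trilinear differences so that the residual $\mathcal R$ carries the \emph{interpolates} $\bmIhk\bmu,\bmIhk\bmb$ in the middle slot. The smallness condition \eqref{eq:data.smallness} is \emph{not} calibrated for that: its constants $C_p,C_\rma,C_\rmt$ are the discrete Poincar\'e, norm-equivalence and trilinear constants, chosen so that the \emph{discrete a priori bound} \eqref{eq:glob.var.bound.vel.11} on $(\ulbmuh,\ulbmbh)$ feeds directly into \eqref{eq:var.glob.small1}. Your route instead requires bounding $\norm[1,1,h]{(\bmIhk\bmu,\bmIhk\bmb)}$ via the interpolator constant of Lemma~\ref{lem:bound.IT} and the continuous Poincar\'e constant of Proposition~\ref{prop:continuous.apriori.bound}; these are \emph{different} (and hidden) constants, so you would obtain $(1-\tilde\chi)$ with some $\tilde\chi\neq\chi$, and cannot conclude \eqref{eq:discrete.energy.error.ub}--\eqref{eq:discrete.energy.error.qr} as stated. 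The fix is the alternative splitting
\[
\th(\ulbmuh,\ulbmuh,\cdot)-\th(\bmIhk\bmu,\bmIhk\bmu,\cdot)=\th(\ulbm{e}_h^{\bmu},\ulbmuh,\cdot)+\th(\bmIhk\bmu,\ulbm{e}_h^{\bmu},\cdot)
\]
(and analogously for the other three differences). After testing with the errors and using skew-symmetry exactly as you do, the surviving residual is
\[
\mathcal R=\th(\ulbm{e}_h^{\bmu},\ulbmuh,\ulbm{e}_h^{\bmu})-\th(\ulbm{e}_h^{\bmb},\ulbmuh,\ulbm{e}_h^{\bmb})-\th(\ulbm{e}_h^{\bmb},\ulbmbh,\ulbm{e}_h^{\bmu})+\th(\ulbm{e}_h^{\bmu},\ulbmbh,\ulbm{e}_h^{\bmb}),
\]
which is precisely $\rmT_h(\underline{\omega}_h,\underline{x}_h^\sharp,\underline{\omega}_h)$ in the paper's notation. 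Then \eqref{eq:bound.rmTh}, \eqref{eq:bound.11.aa} and the discrete bound \eqref{eq:glob.var.bound.vel.11} (combined with $\norm[\bmX,\star]{\rmF_h}\le C_p(\norm[\Omega]{\bmf}^2+\norm[\Omega]{\bmg}^2)^{1/2}$) give exactly $|\mathcal R|\le\chi\,\norm[\rma,\rma,h]{(\ulbm{e}_h^{\bmu},\ulbm{e}_h^{\bmb})}^2$ under \eqref{eq:data.smallness}, and the rest of your argument goes through verbatim.
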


As a consequence of Theorem \ref{thm:discrete.energy.error}, we see that under the data-smallness condition and piecewise smoothness of the continuous solution, the entire sequence $(\bmuh, \bmbh, q_h, r_h)$ converges to $(\bmu, \bmb, q, r)$, and thus that the solution to the continuous problem is unique.

\subsection{An estimate on the fluid pressure}\label{sec:estimate.fluid.pressure}
The quantity $q$ approximated in \eqref{eq:discrete.energy.error.qr} incorporates the fluid pressure $p$ and the magnetic pressure $\frac12\rho \bmb \cdot \bmb$ via the equation
\[
	q = \frac{1}{\rho}\left(p + \frac12\rho\bmb \cdot \bmb\right).
\]
However, it can be interesting to define a discrete pressure $p_h$ and derive estimates on the quantity $\norm[\Omega]{p_h - \pihzr{k}p}$. Setting 
\begin{equation}\label{eq:discrete.pressure.def}
	p_h = \rho q_h - \frac12\rho\bmbh \cdot \bmbh,
\end{equation}
it holds by a triangle inequality,
\begin{equation}\label{eq:discrete.pressure.triangle.inequality}
	\norm[\Omega]{p_h - \pihzr{k}p} \le \rho\norm[\Omega]{q_h - \pihzr{k}q} + \frac12\rho\norm[\Omega]{\bmbh \cdot \bmbh - \pihzr{k}(\bmb \cdot \bmb)}.
\end{equation}
Consider,
\begin{align*}
	{}&\norm[\Omega]{\bmbh \cdot \bmbh - \pihzr{k}(\bmb \cdot \bmb)} \\
	 {}&\le \norm[\Omega]{\bmbh \cdot \bmbh - (\bmpihzr{k}\bmb) \cdot (\bmpihzr{k}\bmb)} + \norm[\Omega]{(\bmpihzr{k}\bmb) \cdot (\bmpihzr{k}\bmb) - \bmb \cdot \bmb} + \norm[\Omega]{\bmb \cdot \bmb - \pihzr{k}(\bmb \cdot \bmb)} 
	\\
	{}&\lesssim \norm[\LP{4}(\Omega)^3]{\bmbh - \bmpihzr{k}\bmb}\norm[\LP{4}(\Omega)^3]{\bmbh + \bmpihzr{k}\bmb} + \norm[\LP{4}(\Omega)^3]{\bmb - \bmpihzr{k}\bmb}\norm[\LP{4}(\Omega)^3]{\bmb + \bmpihzr{k}\bmb} + h^{k+1}\seminorm[\HS{k+1}(\Th)]{\bmb \cdot \bmb},
\end{align*}
where we have applied a H\"{o}lder inequality and the approximation properties of $\pihzr{k}$. Applying the approximation properties of $\bmpihzr{k}$ and equation \eqref{eq:discrete.poincare.sobolev} below (noting that $\bmbh - \bmpihzr{k}\bmb = (\ulbmbh - \bmIhk\bmb)_h$) as well as the $L^4(\Omega)$-boundedness of $\bmpihzr{k}$ and equation \eqref{eq:discrete.energy.error.ub} we infer that
\begin{multline*}
	\norm[\Omega]{\bmbh \cdot \bmbh - \pihzr{k}(\bmb \cdot \bmb)} \\ \le C h^{k+1} \BRAC{(1-\chi)^{-1}\calN(\bmu,\bmb,q,r)(\norm[\LP{4}(\Omega)^3]{\bmbh} + \norm[\LP{4}(\Omega)^3]{\bmb}) + \seminorm[\WSP{k+1,4}(\Th)^3]{\bmb}\norm[\LP{4}(\Omega)^3]{\bmb} + \norm[\WSP{k+1,4}(\Th)^3]{\bmb}^2}
\end{multline*}
where $C$ is independent of $h$, $\bmf$, $\bmg$ and $\chi$, but depends on $\nu_k$ and $\nu_m$. Combining with \eqref{eq:discrete.pressure.triangle.inequality} and Theorem \ref{thm:discrete.energy.error}, as well as equation \eqref{eq:discrete.apriori.bound} and Lemma \ref{lem:discrete.poincare.sobolev} below to bound the discrete term $\norm[\LP{4}(\Omega)^3]{\bmbh}$, yields an optimal $\mathcal O(h^{k+1})$ estimate on the discrete pressure.

\section{Analysis of the scheme}\label{sec:proofs}

\subsection{Discrete functional analysis results}

Let us first recall the following trace inequalities \cite[Lemma 4 and Eq.~(3.3)]{droniou.yemm:2021:robust}, valid for sets connected by star-shaped sets: for all \(p\in [1,\infty)\), it holds
\begin{alignat}{2}
\hT\norm[\LP{p}(\bdryT)]{v}^p \lesssim{}& \norm[\LP{p}(T)]{v}^p+\hT^p\norm[\LP{p}(T)^3]{\nabla v}^p&\qquad \forall v\in \WSP{1,p}(T),\label{eq:continuous.trace}\\
\hT\norm[\LP{p}(\bdryT)]{v}^p\lesssim{}& \norm[\LP{p}(T)]{v}^p&\qquad\forall v\in\POLY{k}(T).	\label{eq:discrete.trace}
\end{alignat}

\begin{lemma}[Sobolev--Poincar\'{e}--Wirtinger inequality for broken polynomial functions]\label{lem:sob.poincare.local.zero.avg}
	If $\wh\in\POLY{k}(\Th)$ satisfies $\int_T \wT = 0$ for all $T\in\Th$, then 
	\[
		\norm[\LP{6}(\Omega)]{\wh} \lesssim \norm[\Omega]{\nabla_h\wh}
	\]
	where $\nabla_h |_T = \nabla$ denotes the broken gradient, and the hidden constant depends on $\Omega, \varrho$, and $k$.
\end{lemma}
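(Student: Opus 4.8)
The plan is to reduce this global inequality to a single‑element Sobolev--Poincar\'e--Wirtinger inequality, exploiting crucially the element‑wise zero‑average hypothesis, and then to reassemble the pieces through an elementary sequence inequality. The only genuine ingredient is a \emph{uniform} local Sobolev--Poincar\'e constant over the admissible element shapes; everything else is bookkeeping. I expect that obtaining this uniform constant (from Assumption~\ref{assum:star.shaped}) is the main — though entirely standard — point, and in particular that no face‑jump or stabilisation seminorm is needed, precisely because of the zero‑average assumption.

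\textbf{Step 1 (local estimate).} I would fix $T\in\Th$ and apply the classical Sobolev--Poincar\'e inequality on $T$ to $\wT\in\POLY{k}(T)\subset\HONE(T)$: since $\int_T\wT=0$ its mean vanishes, so this reads $\norm[\LP6(T)]{\wT}\le C_T\norm[T]{\nabla\wT}$. The crucial point is the dependence of $C_T$: because the embedding $\HONE\hookrightarrow\LP6$ is critical and scale‑invariant in dimension $3$ (one has $2^{*}=6$), the constant $C_T$ is invariant under dilations of $T$, hence depends only on the shape of $T$. By Assumption~\ref{assum:star.shaped}, $T$ is star‑shaped with respect to a ball of radius $\ge\varrho^{-1}\hT$, and the family of domains with such a shape admits a uniform Sobolev--Poincar\'e constant $C(\varrho)$ (this is classical for domains star‑shaped with respect to a ball of controlled chunkiness). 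Alternatively, one may use the equivalence of norms on the finite‑dimensional space $\POLY{k}$ together with the same scaling argument, which gives a constant depending on $\varrho$ and $k$.

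\textbf{Step 2 (reassembly).} Raising the local estimate to the sixth power and summing over $T\in\Th$, and using $\ol\Omega=\bigcup_{T\in\Th}\ol T$ together with $\wh|_T=\wT$, I obtain
\[
\norm[\LP6(\Omega)]{\wh}^6=\sum_{T\in\Th}\norm[\LP6(T)]{\wT}^6\le C(\varrho)^6\sum_{T\in\Th}\norm[T]{\nabla\wT}^6.
\]
Writing $b_T:=\norm[T]{\nabla\wT}\ge 0$ and $S:=\sum_{T\in\Th}b_T^2=\norm[\Omega]{\nabla_h\wh}^2$, the elementary bound $b_T^6=(b_T^2)^3\le S^2\,b_T^2$ (valid since $b_T^2\le S$) yields $\sum_{T\in\Th}b_T^6\le S^3$, hence $\norm[\LP6(\Omega)]{\wh}^6\le C(\varrho)^6\,\norm[\Omega]{\nabla_h\wh}^6$; taking sixth roots concludes. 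It is worth emphasising that for a generic broken polynomial function one would only control $\norm[\LP6(\Omega)]{\wh}$ by $\norm[\Omega]{\nabla_h\wh}$ plus a face‑jump seminorm (as in the discontinuous Galerkin theory); here the hypothesis $\int_T\wT=0$ on every $T$ makes the estimate purely local, so no face contributions survive.
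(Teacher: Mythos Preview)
Your proof is correct. The paper does not give a self‑contained proof of this lemma but instead cites \cite[Lemma 6.35]{di-pietro.droniou:2020:hybrid}, noting that the constant there depends on a local $L^2$ Poincar\'e inequality, an inverse Lebesgue inequality for polynomials, and the equivalence $|T|\approx h_T^3$. In other words, the cited argument factors the local estimate as $\norm[\LP6(T)]{\wT}\lesssim h_T^{-1}\norm[T]{\wT}\lesssim h_T^{-1}\cdot h_T\norm[T]{\nabla\wT}$, where the first bound is the polynomial inverse inequality and the second is $L^2$ Poincar\'e--Wirtinger; the $h_T$'s cancel precisely because $6=2^{*}$ in three dimensions.

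Your primary route is genuinely different (and slightly more elegant): you apply the \emph{critical} Sobolev--Poincar\'e inequality directly on each $T$ and observe that its constant is scale‑invariant, so only shape regularity matters and the star‑shapedness from Assumption~\ref{assum:star.shaped} gives a uniform $C(\varrho)$. This bypasses the polynomial structure entirely and would work for arbitrary $H^1$ functions with zero element means; the paper's route, by contrast, uses the polynomial inverse inequality and therefore depends on $k$ but generalises more easily to non‑critical exponents $p<6$. Your reassembly in Step~2 via $\sum_T b_T^6\le(\sum_T b_T^2)^3$ is exactly the right bookkeeping. The alternative you sketch (equivalence of norms on $\POLY{k}$ plus scaling) is essentially the cited approach.
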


Lemma \ref{lem:sob.poincare.local.zero.avg} is a special case of \cite[Lemma 6.35]{di-pietro.droniou:2020:hybrid} (recalling that the space dimension is $d=3$ here), see also \cite[Lemma 5.8 and Remark 5.9]{di-pietro.droniou:2017:hybrid}. Inspecting the proof of \cite[Lemma 6.35]{di-pietro.droniou:2020:hybrid}, the hidden constant depends on a Poincar\'{e} inequality, an inverse Lebesgue inequality for polynomials, and the equivalence $|T|_d \approx \hT^d$, all of which hold under Assumption \ref{assum:star.shaped} \cite{droniou.yemm:2021:robust}.

We state here discrete Sobolev--Poincar\'e estimates on $\bmUhkn$. Since $\bmUhkzr\subset \bmUhkn$, these estimates also hold in $\bmUhkzr$.

\begin{lemma}[Discrete Sobolev--Poincar\'{e}--Wirtinger inequality]\label{lem:sob.poincare}
	For all $\ulbmvh\in\bmUhk$ it holds that
	\begin{equation}\label{eq:discrete.poincare.wirtinger}
		\norm[\LP{6}(\Omega)^3]{\bmvh - \olbm{v}_h} \lesssim \norm[1, h]{\ulbmvh}
	\end{equation}
	where
	\begin{equation}\label{eq:def.ovh}
		\olbm{v}_h = \frac{1}{|\Omega|}\int_{\Omega} \bmvh.
	\end{equation}
\end{lemma}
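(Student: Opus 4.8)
The plan is to split the vector field $\bmvh-\olbm{v}_h$ into a part that oscillates \emph{inside} each mesh element, controlled directly by Lemma~\ref{lem:sob.poincare.local.zero.avg}, and a piecewise-\emph{constant} part with vanishing mean over $\Omega$, controlled by a discrete Sobolev inequality for piecewise-constant functions together with a careful estimate of the element-to-face jumps. Every norm below decomposes over the three Cartesian components, so it suffices to prove, for a scalar hybrid unknown $\ul{v}_h=((v_T)_{T\in\Th},(v_F)_{F\in\Fh})$,
\[
\norm[\LP{6}(\Omega)]{v_h-\ol{v}_h}\lesssim\norm[1,h]{\ul{v}_h},\qquad\ol{v}_h:=\frac1{|\Omega|}\int_\Omega v_h ,
\]
where $v_h|_T=v_T$, $v_{\FT}|_F=v_F$, and $\norm[1,h]{\ul{v}_h}^2=\sum_{T\in\Th}(\norm[T]{\nabla v_T}^2+\hT^{-1}\norm[\bdryT]{v_{\FT}-v_T}^2)$. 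Let $\pihzr{0}$ be the $\LTWO$-orthogonal projector onto piecewise-constant functions on $\Th$, put $c_T:=(\pihzr{0}v_h)|_T=|T|^{-1}\int_T v_T$, and decompose
\[
v_h-\ol{v}_h=\big(v_h-\pihzr{0}v_h\big)+\big(\pihzr{0}v_h-\ol{v}_h\big)=:v_h^\flat+v_h^\sharp .
\]

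Since $v_h^\flat\in\POLY{k}(\Th)$ has zero mean on each $T\in\Th$ and $\nabla_h(\pihzr{0}v_h)=0$, Lemma~\ref{lem:sob.poincare.local.zero.avg} gives $\norm[\LP{6}(\Omega)]{v_h^\flat}\lesssim\norm[\Omega]{\nabla_h v_h^\flat}=\norm[\Omega]{\nabla_h v_h}\le\norm[1,h]{\ul{v}_h}$. The function $v_h^\sharp$ is piecewise constant with $\int_\Omega v_h^\sharp=0$; for such functions, a classical discrete Sobolev inequality on polytopal meshes yields
\[
\norm[\LP{6}(\Omega)]{v_h^\sharp}^2\lesssim\sum_{F\in\Fhi}\frac1{h_T+h_{T'}}\norm[F]{c_T-c_{T'}}^2 ,
\]
where for each interior face $F$ we write $T,T'$ for the two elements of $\Th$ sharing it (the constant $\ol{v}_h$ cancels in the jump $c_T-c_{T'}$). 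Under Assumption~\ref{assum:star.shaped} this inequality holds with a constant independent of $h$: it reduces to the continuous isoperimetric inequality applied to unions of elements, together with the mesh equivalences $|T|\approx\hT^3$ and $|\bdryT|\lesssim\hT^2$ available from \cite{droniou.yemm:2021:robust}. It remains to bound the jump-sum by $\norm[1,h]{\ul{v}_h}^2$.

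Fix an interior face $F\subset\bdryT\cap\partial T'$ and insert the single-valued face mean $\piFzr{0}v_F$; using $\tfrac1{h_T+h_{T'}}\le\min(\tfrac1{h_T},\tfrac1{h_{T'}})$ and symmetrising in $(T,T')$, it is enough to control $\sum_{T\in\Th}\sum_{F\in\FT}\tfrac1{\hT}\norm[F]{c_T-\piFzr{0}v_F}^2$. Split $c_T-\piFzr{0}v_F=(c_T-\piFzr{0}v_T)+\piFzr{0}(v_T-v_{\FT})$ on $F$; since $\piFzr{0}$ is the $\LTWO(F)$-orthogonal projection onto constants, $\norm[F]{c_T-\piFzr{0}v_T}\le\norm[F]{v_T-c_T}$ and $\norm[F]{\piFzr{0}(v_T-v_{\FT})}\le\norm[F]{v_T-v_{\FT}}$. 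Crucially, one sums these bounds over $F\in\FT$ \emph{before} enlarging any $L^2(F)$-norm to an $L^2(\bdryT)$-norm, so that $\sum_{F\in\FT}\norm[F]{v_T-c_T}^2=\norm[\bdryT]{v_T-c_T}^2$ and $\sum_{F\in\FT}\norm[F]{v_T-v_{\FT}}^2=\norm[\bdryT]{v_{\FT}-v_T}^2$ with no factor counting the faces of $T$. Then the discrete trace inequality \eqref{eq:discrete.trace} (applied to the polynomial $v_T-c_T$) and the Poincar\'e--Wirtinger inequality on the star-shaped element $T$ give
\[
\sum_{F\in\FT}\tfrac1{\hT}\norm[F]{v_T-c_T}^2=\tfrac1{\hT}\norm[\bdryT]{v_T-c_T}^2\lesssim\tfrac1{\hT^{2}}\norm[T]{v_T-c_T}^2\lesssim\norm[T]{\nabla v_T}^2 ,
\]
whereas $\sum_{F\in\FT}\tfrac1{\hT}\norm[F]{v_T-v_{\FT}}^2=\tfrac1{\hT}\norm[\bdryT]{v_{\FT}-v_T}^2$ is exactly the second contribution to $\norm[1,T]{\ul{v}_T}^2$. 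Summing over $T\in\Th$ yields $\norm[\LP{6}(\Omega)]{v_h^\sharp}\lesssim\norm[1,h]{\ul{v}_h}$, and combined with the estimate on $v_h^\flat$ this proves the lemma.

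The delicate point — and the reason Assumption~\ref{assum:star.shaped} is invoked — is robustness with respect to small faces: one must never replace an individual $L^2(F)$-norm by the full $L^2(\bdryT)$-norm before summing over $F\in\FT$, since the number of faces of $T$ is unbounded for the admissible meshes considered here. Star-shapedness of $T$ with respect to a ball of radius $\gtrsim\hT$ supplies, with constants depending only on $\varrho$, the polynomial trace inequality \eqref{eq:discrete.trace}, the Poincar\'e--Wirtinger inequality on $T$, and the volume and surface estimates $|T|\approx\hT^3$, $|\bdryT|\lesssim\hT^2$ used above. I expect the main obstacle to be establishing the discrete Sobolev inequality for piecewise-constant functions in this small-faces setting, rather than the (routine) local estimates; alternatively one may invoke a discrete Sobolev embedding for hybrid spaces from \cite[Chapter~6]{di-pietro.droniou:2020:hybrid} and verify, following \cite{droniou.yemm:2021:robust}, that its proof carries over under Assumption~\ref{assum:star.shaped}.
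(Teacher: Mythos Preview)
Your proof is correct and follows essentially the same approach as the paper: the paper's proof is a brief sketch referring to \cite[Theorem 6.5]{di-pietro.droniou:2020:hybrid} and noting that the argument adapts under Assumption~\ref{assum:star.shaped} by projecting onto the lowest-order space on $\Th$ itself (your piecewise-constant projection $\pihzr{0}$), using the trace inequality \eqref{eq:discrete.trace} on $\bdryT$ rather than face-by-face, and invoking \cite[Lemma B.24]{droniou.eymard:2018:gradient} for the discrete Sobolev embedding of piecewise constants. Your decomposition $v_h-\ol{v}_h=v_h^\flat+v_h^\sharp$, the treatment of $v_h^\flat$ via Lemma~\ref{lem:sob.poincare.local.zero.avg}, and the jump estimate for $v_h^\sharp$ are precisely a detailed write-up of that sketch, and you correctly identify the same delicate point the paper flags---namely that the discrete Sobolev inequality for piecewise constants must hold without assuming a bounded number of faces per element.
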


\begin{proof}
	A proof of \eqref{eq:discrete.poincare.wirtinger} is provided by \cite[Theorem 6.5]{di-pietro.droniou:2020:hybrid} for meshes possessing a regular matching simplicial submesh, by considering projections of the high-order HHO space on $\Th$ into the lowest-order space on the matching simplicial submesh. The arguments, however, extend easily to the case of star-shaped elements by instead projecting onto the lowest-order space on $\Th$ itself, by relying on the boundary trace inequality \eqref{eq:discrete.trace} rather than a trace inequality on each face, and by invoking \cite[Lemma B.24]{droniou.eymard:2018:gradient} (the latter lemma assumes the number of faces attached to each element $T\in\Th$ is bounded above; however, a closer inspection of the proof reveals this assumption is not necessary).
\end{proof}


\begin{lemma}\label{lem:discrete.poincare}
	It holds for all $\ulbmvh\in\bmUhkn$ that
	\begin{equation}\label{eq:bound.olbvh}
		\norm[\Omega]{\olbm{v}_h} \lesssim \norm[1, h]{\ulbmvh},
	\end{equation}
	where $\olbm{v}_h$ is defined in \eqref{eq:def.ovh}.
\end{lemma}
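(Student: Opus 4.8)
The inequality is false on the whole space $\bmUhk$ (a nonzero constant vector field has vanishing seminorm but nonzero mean), so the constraint $\bmvF\cdot\norF=0$ on $\Fhb$ defining $\bmUhkn$ must be used in an essential way. The plan is to control $\bmc\cdot\int_\Omega\bmvh$ for an arbitrary fixed $\bmc\in\R^3$ and then to specialise $\bmc=\int_\Omega\bmvh$. The device is to test against the affine tensor field $\bmtau:\Omega\to\R^{3\times 3}$, $\bmtau(\bmx)\defeq(\bmc\cdot\bmx)\,\bmI$, which is tailored to have two properties: its row-wise divergence is the constant $\bmc$ (so an integration by parts will produce $\int_\Omega\bmvh$), and for any unit vector $\bmn$ one has $\bmtau\bmn=(\bmc\cdot\bmx)\bmn\parallel\bmn$ (so the boundary terms involving face unknowns will be annihilated by the normal-trace constraint, exactly as in \eqref{eq:bdry.cancellation.polyhedral} at the continuous level).

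\textbf{Main computation.} First I would integrate by parts elementwise: since $\nabla\cdot\bmtau=\bmc$ and each $\bmvT$ is polynomial on $T$, for every $T\in\Th$
\[
\bmc\cdot\int_T\bmvT=\int_T\bmvT\cdot(\nabla\cdot\bmtau)=-\int_T\nabla\bmvT:\bmtau+\int_{\bdryT}\bmvT\cdot(\bmtau\norT).
\]
Writing $\bmvT=\bmvFT-(\bmvFT-\bmvT)$ in the boundary integral and summing over $T\in\Th$,
\[
\bmc\cdot\int_\Omega\bmvh=-\sum_{T\in\Th}\int_T\nabla\bmvT:\bmtau+\sum_{T\in\Th}\int_{\bdryT}\bmvFT\cdot(\bmtau\norT)-\sum_{T\in\Th}\int_{\bdryT}(\bmvFT-\bmvT)\cdot(\bmtau\norT).
\]
I would then observe that the second sum vanishes: reordering it over faces, each $F\in\Fhi$ shared by $T,T'$ contributes $\int_F\bmvF\cdot\bmtau(\norTF+\bmn_{T'F})=0$ since $\bmvF$ and $\bmtau|_F$ are single-valued and $\norTF=-\bmn_{T'F}$, while each $F\in\Fhb$ contributes $\int_F(\bmc\cdot\bmx)(\bmvF\cdot\norF)=0$ because $\ulbmvh\in\bmUhkn$.

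\textbf{Estimates and conclusion.} Since $\norm[\LP\infty(\Omega)]{\bmtau}\lesssim|\bmc|$ with a constant depending only on $\Omega$, a Cauchy--Schwarz inequality and $\sum_{T\in\Th}|T|=|\Omega|$ bound the first sum by $\lesssim|\bmc|\,\norm[\Omega]{\nabla_h\bmvh}\le|\bmc|\,\norm[1,h]{\ulbmvh}$. For the last sum I would pair the seminorm ingredient $\hT^{-1/2}\norm[\bdryT]{\bmvFT-\bmvT}$ with $\hT^{1/2}\norm[\bdryT]{\bmtau\norT}$; using $|\bmtau\norT|\le\norm[\LP\infty(\Omega)]{\bmtau}\lesssim|\bmc|$ together with the continuous trace inequality \eqref{eq:continuous.trace} applied to the function $1$ (which gives $\hT\,|\bdryT|\lesssim|T|$), I get $\hT^{1/2}\norm[\bdryT]{\bmtau\norT}\lesssim|\bmc|\,|T|^{1/2}$, and a Cauchy--Schwarz inequality over $T\in\Th$ (again using $\sum_{T\in\Th}|T|=|\Omega|$ and the definition \eqref{eq:local.norm.def} of $\norm[1,h]{\cdot}$) bounds this sum by $\lesssim|\bmc|\,\norm[1,h]{\ulbmvh}$. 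Altogether $\big|\bmc\cdot\int_\Omega\bmvh\big|\lesssim|\bmc|\,\norm[1,h]{\ulbmvh}$ for every $\bmc\in\R^3$; taking $\bmc=\int_\Omega\bmvh$ (the case $\int_\Omega\bmvh=\bm0$ being trivial) and dividing yields $\big|\int_\Omega\bmvh\big|\lesssim\norm[1,h]{\ulbmvh}$, hence $\norm[\Omega]{\olbm{v}_h}=|\Omega|^{-1/2}\big|\int_\Omega\bmvh\big|\lesssim\norm[1,h]{\ulbmvh}$.

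\textbf{Expected difficulty.} The calculation is essentially routine once the right test tensor is in hand; the only genuinely delicate point is that choice. A naive tensor with the correct divergence, such as $\bmc\otimes\bmx$, would \emph{not} be annihilated on $\partial\Omega$ by the weak constraint $\bmvF\cdot\norF=0$ of $\bmUhkn$ (as opposed to the full $\bmvF=\bm0$ of $\bmUhkzr$), so the scalar-times-identity structure of $\bmtau$ is precisely what makes the consistent boundary sum disappear. I would also remark that, combined with Lemma \ref{lem:sob.poincare}, this establishes that $\norm[1,h]{\cdot}$ is a genuine norm on $\bmUhkn$, and hence on $\bmUhkzr$.
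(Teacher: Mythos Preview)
Your proof is correct and follows essentially the same approach as the paper: the paper tests against the scalar affine function $\psi(\bmx)=\olbm{v}_h\cdot(\bmx-\bmx_\Omega)$, and since $\nabla\bmvT:(\psi\bmI)=\psi\,(\nabla\cdot\bmvT)$ and $\bmvT\cdot(\psi\bmI\,\norT)=\psi\,(\bmvT\cdot\norT)$, your tensor $\bmtau=(\bmc\cdot\bmx)\bmI$ is just a repackaging of the same test. The only cosmetic difference is that the paper centres $\psi$ so that $\int_\Omega\psi=0$ and invokes a Poincar\'e--Wirtinger inequality to bound $\norm[\HONE(\Omega)]{\psi}$, whereas you skip the centring and use a direct $L^\infty$ bound on $\bmtau$; both conclude identically.
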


\begin{proof}
	Define $\psi(\boldsymbol{x})=\olbm{v}_h\cdot(\boldsymbol{x}-\boldsymbol{x}_\Omega)\in \POLY{1}(\Omega)$, where $\boldsymbol{x}_\Omega=\frac{1}{|\Omega|}\int_\Omega \boldsymbol{x}\,d\boldsymbol{x}$ is the centre of mass of $\Omega$. Then, $\int_\Omega \psi = 0$ and $\nabla \psi = \olbm{v}_h$,	and thus
	\begin{equation}\label{eq:bmUhkn.poinacre.proof.1}
		\norm[\Omega]{\olbm{v}_h}^2 = \int_\Omega\bmvh\cdot\olbm{v}_h = \sum_{T\in\Th}\int_T\bmvT \cdot \nabla \psi = \sum_{T\in\Th}\Brac{-\int_T (\nabla \cdot \bmvT) \psi + \int_{\bdryT} (\bmvT\cdot\norT) \psi}.
	\end{equation}
	Consider, by the homogeneous condition on $\bmUhkn$ and the single-valuedness of $\psi$ on the interior mesh faces,
	\[
		\sum_{T\in\Th}\int_{\bdryT}(\bmvT\cdot\norT) \psi  = \sum_{T\in\Th}\int_{\bdryT}((\bmvT-\bmvFT)\cdot\norT) \psi.
	\]
	By a Cauchy--Schwarz inequality and the continuous trace inequality \eqref{eq:continuous.trace},
	\begin{align}
		\sum_{T\in\Th}\int_{\bdryT}((\bmvT-\bmvFT)\cdot\norT) \psi	\lea \sum_{T\in\Th}\norm[\bdryT]{\bmvT-\bmvFT} \norm[\bdryT]{\psi} \nl
		\les \sum_{T\in\Th}\hT^{-\frac12}\norm[\bdryT]{\bmvT-\bmvFT} \Brac{\norm[T]{\psi} + \hT\norm[T]{\nabla\psi}} \nl
		\les \Brac{\sum_{T\in\Th}\hT^{-1}\norm[\bdryT]{\bmvT-\bmvFT}^2}^\frac12 \norm[\HONE(\Omega)]{\psi},\label{eq:bmUhkn.poinacre.proof.2}
	\end{align}
	where the final inequality results from a discrete Cauchy--Schwarz inequality and $\hT \lesssim 1$. We also have
	\begin{equation}\label{eq:bmUhkn.poinacre.proof.3}
		\sum_{T\in\Th}-\int_T (\nabla \cdot \bmvT) \psi \lesssim \sum_{T\in\Th}\norm[T]{\nabla \bmvT} \norm[T]{\psi} \le \Brac{\sum_{T\in\Th}\norm[T]{\nabla \bmvT}^2}^\frac12 \norm[\HONE(\Omega)]{\psi}.
	\end{equation}
	Substituting \eqref{eq:bmUhkn.poinacre.proof.2} and \eqref{eq:bmUhkn.poinacre.proof.3} into \eqref{eq:bmUhkn.poinacre.proof.1} yields
	\begin{equation}\label{eq:bmUhkn.poinacre.proof.4}
		\norm[\Omega]{\olbm{v}_h}^2 \lesssim \norm[1, h]{\ulbmvh}\norm[\HONE(\Omega)]{\psi}.
	\end{equation}
	As $\int_{\Omega}\psi = 0$, the following Poincar\'{e}--Wirtinger inequality holds: $\norm[\HONE(\Omega)]{\psi} \lesssim \seminorm[\HONE(\Omega)]{\psi} = \norm[\Omega]{\olbm{v}_h}$.
	Thus, we infer from \eqref{eq:bmUhkn.poinacre.proof.4} that $\norm[\Omega]{\olbm{v}_h}^2 \lesssim \norm[1, h]{\ulbmvh}\norm[\Omega]{\olbm{v}_h}$ and the proof of \eqref{eq:bound.olbvh} follows by simplifying by $\norm[\Omega]{\olbm{v}_h}$.
\end{proof}

\begin{lemma}\label{lem:discrete.poincare.sobolev}
	For all $\ulbmvh\in\bmUhkn$ it holds that
	\begin{equation}\label{eq:discrete.poincare.sobolev}
	\norm[\LP{6}(\Omega)^3]{\bmvh} \lesssim \norm[1, h]{\ulbmvh}.
	\end{equation}
\end{lemma}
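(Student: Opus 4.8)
The plan is to obtain \eqref{eq:discrete.poincare.sobolev} by a triangle inequality that splits $\bmvh$ into its mean value $\olbm{v}_h$ (as defined in \eqref{eq:def.ovh}) and the zero-mean remainder $\bmvh-\olbm{v}_h$, and then to invoke the two preceding lemmas to control each piece. Concretely, first I would write
\[
\norm[\LP{6}(\Omega)^3]{\bmvh} \le \norm[\LP{6}(\Omega)^3]{\bmvh-\olbm{v}_h} + \norm[\LP{6}(\Omega)^3]{\olbm{v}_h}.
\]

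For the first term on the right-hand side, I would simply apply the discrete Sobolev--Poincar\'e--Wirtinger inequality \eqref{eq:discrete.poincare.wirtinger} of Lemma \ref{lem:sob.poincare}, which is available on $\bmUhkn$ (and valid since $\ulbmvh\in\bmUhkn\subset\bmUhk$), giving $\norm[\LP{6}(\Omega)^3]{\bmvh-\olbm{v}_h}\lesssim\norm[1,h]{\ulbmvh}$. For the second term, the key observation is that $\olbm{v}_h$ is a constant vector on $\Omega$, so all its $L^p(\Omega)$ norms are equivalent up to powers of $|\Omega|$: in particular $\norm[\LP{6}(\Omega)^3]{\olbm{v}_h} = |\Omega|^{1/6-1/2}\norm[\Omega]{\olbm{v}_h}\lesssim \norm[\Omega]{\olbm{v}_h}$, since $|\Omega|$ is a fixed data of the problem. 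Then Lemma \ref{lem:discrete.poincare}, specifically the bound \eqref{eq:bound.olbvh}, yields $\norm[\Omega]{\olbm{v}_h}\lesssim\norm[1,h]{\ulbmvh}$.

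Combining these two estimates gives $\norm[\LP{6}(\Omega)^3]{\bmvh}\lesssim\norm[1,h]{\ulbmvh}$, which is the claimed inequality; the hidden constant depends only on $\Omega$, $\varrho$, and $k$ through the constants in Lemmas \ref{lem:sob.poincare} and \ref{lem:discrete.poincare}. I do not anticipate any real obstacle here: the statement is essentially a packaging of the two previous lemmas, and the only mild point to be careful about is that the mean-value part must be handled in the $L^6$ norm rather than the $L^2$ norm in which \eqref{eq:bound.olbvh} is stated, which is harmless because $\olbm{v}_h$ is constant and $\Omega$ is bounded.
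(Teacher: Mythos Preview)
Your proposal is correct and follows essentially the same approach as the paper: a triangle inequality splitting off the mean $\olbm{v}_h$, then Lemma~\ref{lem:sob.poincare} for the zero-mean part and Lemma~\ref{lem:discrete.poincare} (together with the constancy of $\olbm{v}_h$ to pass from $L^6$ to $L^2$) for the mean part.
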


\begin{proof}
	We use a triangle inequality to write $\norm[\LP{6}(\Omega)^3]{\bmvh} \le \norm[\LP{6}(\Omega)^3]{\bmvh - \olbm{v}_h} + \norm[\LP{6}(\Omega)^3]{\olbm{v}_h}$ and invoke then Lemma \ref{lem:sob.poincare} to bound the first addend, while for the second we use the fact that $\olbm{v}_h$ is constant together with \eqref{eq:bound.olbvh} to write $\norm[\LP{6}(\Omega)^3]{\olbm{v}_h} = |\Omega|^{\frac{1}{6} - \frac{1}{2}}\norm[\Omega]{\olbm{v}_h} \lesssim \norm[1,h]{\ulbm{v}_h}$.
\end{proof}

	We note that as a result of Lemma \ref{lem:discrete.poincare.sobolev}, $\bmUhkzr$ and $\bmUhkn$ are Banach spaces equipped with the norm $\norm[1, h]{\cdot}$. While this is a standard result on the space $\bmUhkzr$, to the best of our knowledge the result on $\bmUhkn$ is novel. In particular, we note the existence of a Poincar\'{e} constant $C_p>0$ independent of $h$ such that, for all $\ulbmvh\in\bmUhkn$,
	\begin{equation}\label{eq:discrete.poincare}
		\norm[\Omega]{\bmvh} \le C_p \norm[1, h]{\ulbmvh}.
	\end{equation}

We conclude our series of discrete functional analysis results on $\bmUhkn$ by establishing a discrete Rellich compactness result in that space, equivalent of \cite[Theorems 6.8 and 6.41]{di-pietro.droniou:2020:hybrid} in the case of Neumann or Dirichlet boundary conditions.

\begin{theorem}[Discrete Rellich theorem in $\bmUhkn$]\label{th:discrete.rellich}
Let $(\Mh)_{h\in\mathcal H}$ be a regular mesh sequence as per Assumption \ref{assum:star.shaped}	and, for all $h\in\mathcal H$, let $\ulbmvh\in\bmUhkn$. Assume
that $(\norm[1,h]{\ulbmvh})_{h\in\mathcal H}$ is bounded. Then, there exists $\bmv\in \bmB$ such that, up to a subsequence as $h\to 0$,
\begin{enumerate}
\item[(i)] $\bmvh\to \bmv$ and $\rh{k+1}\ulbmvh\to\bmv$ in $L^s(\Omega)^3$ for all $s\in[1,6)$,
\item[(ii)] $\nabla_h \rh{k+1}\ulbmvh\to \nabla\bmv$ and, for all integers $l\ge 0$, $\Gh{l}\ulbmvh\to\nabla\bmv$ weakly in $L^2(\Omega)^{3\times 3}$,
\end{enumerate}
where $\Gh{l}:\bmUhk\to\POLY{l}(\Th)^{3\times 3}$ is defined by $\Gh{l}\ulbmvh|_T=\GT{l}\ulbmvT$ for all $\ulbmvh\in\bmUhk$ and all $T\in\Th$.
\end{theorem}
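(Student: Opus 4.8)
The plan is to follow the classical strategy for discrete Rellich-type theorems in the HHO setting, mirroring the proofs of \cite[Theorems 6.8, 6.41]{di-pietro.droniou:2020:hybrid} but adapted to the Neumann-type space $\bmUhkn$ where only the normal trace vanishes on $\partial\Omega$. First I would extract a weakly convergent subsequence: since $(\norm[1,h]{\ulbmvh})_h$ is bounded, Lemma \ref{lem:discrete.poincare.sobolev} gives a uniform $L^6(\Omega)^3$-bound on $\bmvh$, and the commutation/consistency properties of $\rT{k+1}$ together with \eqref{eq:aT.norm.equivalence} give uniform $L^2$-bounds on $\nabla_h\rh{k+1}\ulbmvh$ and, via the definition of $\GT{l}$ and \eqref{eq:GT.expanded}, on $\Gh{l}\ulbmvh$. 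Hence, up to a subsequence, $\bmvh\rightharpoonup\bmv$ weakly in $L^6$, $\nabla_h\rh{k+1}\ulbmvh\rightharpoonup\bmG$ weakly in $L^2$, and $\Gh{l}\ulbmvh\rightharpoonup\bmG_l$ weakly in $L^2$ for each $l$. A standard argument (testing the definition of $\rT{k+1}$ against a fixed smooth function and passing to the limit, using $\hT^{-1/2}\norm[\bdryT]{\bmvFT-\bmvT}\lesssim\norm[1,h]{\ulbmvh}$ to kill the boundary terms that carry a factor $\hT^{1/2}$) identifies $\bmG=\nabla\bmv$ in the sense of distributions on $\Omega$; the same kind of computation, integrating by parts the other way, shows $\bmG_l=\nabla\bmv$ as well, so $\bmv\in H^1(\Omega)^3$. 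The boundary condition $\bmv\cdot\bmn=0$ on $\partial\Omega$, i.e.\ $\bmv\in\bmB$, is obtained by testing the definition of $\GT{l}$ (or of $\DT{k}$) against a fixed smooth $\bmtau$ that does \emph{not} vanish on $\partial\Omega$ and comparing with the continuous integration-by-parts formula; the face term $\sum_F\int_F\bmvF\cdot(\bmtau\bmn)$ survives only on $\partial\Omega$, and the constraint $\bmvF\cdot\norF=0$ there forces $\int_{\partial\Omega}\bmv\cdot(\bmtau\bmn)=0$ for all such $\bmtau$.

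The heart of the theorem is upgrading weak convergence of $\bmvh$ to strong convergence in $L^s(\Omega)^3$, $s\in[1,6)$. For this I would use a discrete Kolmogorov--Riesz / translation-estimate argument: bound $\norm[L^2(\R^3)]{\tilde{\bmvh}(\cdot+\bmxi)-\tilde{\bmvh}(\cdot)}$ (with $\tilde{\bmvh}$ the extension of $\bmvh$ by $\bm0$ outside $\Omega$, or better an extension compatible with the Neumann condition) by $C|\bmxi|\norm[1,h]{\ulbmvh}+o_h(1)$, uniformly in $h$; this is exactly the content of \cite[Lemma B.24]{droniou.eymard:2018:gradient}-type estimates already invoked in the proof of Lemma \ref{lem:sob.poincare}. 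Combined with the uniform $L^6$-bound and the Fréchet--Kolmogorov theorem, this yields relative compactness in $L^2_{\mathrm{loc}}$, hence (with the $L^6$ bound controlling mass at infinity/near the boundary and an interpolation inequality) strong convergence in every $L^s(\Omega)^3$ with $s<6$, with limit necessarily equal to the weak limit $\bmv$.

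Next I would transfer strong convergence from $\bmvh$ to $\rh{k+1}\ulbmvh$: by \cite[Eq.~(8.17)]{di-pietro.droniou:2020:hybrid}-style estimates, or directly from the definition \eqref{eq:rT.def} and the trace inequality \eqref{eq:discrete.trace}, one has $\norm[T]{\rT{k+1}\ulbmvT-\bmvT}\lesssim\hT\norm[1,T]{\ulbmvT}$, so $\norm[\Omega]{\rh{k+1}\ulbmvh-\bmvh}\lesssim h\norm[1,h]{\ulbmvh}\to0$, and the $L^s$ strong convergence of $\rh{k+1}\ulbmvh$ to $\bmv$ follows (using again the uniform $L^6$-bound on $\rh{k+1}\ulbmvh$ to interpolate). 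Finally, to get strong convergence $\nabla_h\rh{k+1}\ulbmvh\to\nabla\bmv$ in $L^2$ rather than just weak, I would show $\limsup_h\norm[\Omega]{\nabla_h\rh{k+1}\ulbmvh}\le\norm[\Omega]{\nabla\bmv}$: this is obtained by testing \eqref{eq:discrete.fluid}-type... no, rather by a direct energy argument — expand $\brac[\Omega]{\nabla_h\rh{k+1}\ulbmvh,\nabla\bmphi}$ for smooth $\bmphi$ approximating $\bmv$, use the definition of $\rT{k+1}$ and the vanishing of the boundary terms in the limit, and conclude by lower semicontinuity plus the Hilbert-space fact that weak convergence together with convergence of norms implies strong convergence.

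The main obstacle I expect is the treatment of the boundary: unlike the Dirichlet space $\bmUhkzr$, functions in $\bmUhkn$ are not zero on $\partial\Omega$, so the translation estimate must be set up with an extension operator that respects (or at least is compatible with) the condition $\bmvF\cdot\norF=0$, and the identification $\bmv\in\bmB$ requires carefully tracking which face contributions survive in the limit of the definitions of $\GT{l}$ and $\DT{k}$. Handling this on a \emph{polyhedral} boundary with possibly small faces, while keeping all constants independent of $h$, is the delicate point; everything else is a routine adaptation of the Dirichlet/Neumann arguments in \cite[Chapter 6]{di-pietro.droniou:2020:hybrid} and \cite{droniou.yemm:2021:robust}.
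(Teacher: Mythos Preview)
Your overall plan for (i) and for identifying $\bmv\in\bmB$ is reasonable and could be made to work, but there is one genuine error, and the paper takes a much shorter route.

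\textbf{The error.} You try to upgrade $\nabla_h\rh{k+1}\ulbmvh\rightharpoonup\nabla\bmv$ to \emph{strong} convergence in $L^2(\Omega)^{3\times 3}$. The theorem does not claim this: item (ii) asserts only weak convergence, both for $\nabla_h\rh{k+1}\ulbmvh$ and for $\Gh{l}\ulbmvh$. Strong gradient convergence is false for a generic bounded sequence (any weakly-but-not-strongly convergent sequence in $H^1$ is a counterexample). Your sketched ``energy argument'' cannot close: expanding $(\nabla_h\rh{k+1}\ulbmvh,\nabla\bmphi)_\Omega$ for smooth $\bmphi$ recovers only the weak limit and gives no control on $\norm[\Omega]{\nabla_h\rh{k+1}\ulbmvh}$ itself; there is no mechanism to obtain $\limsup_h\norm[\Omega]{\nabla_h\rh{k+1}\ulbmvh}\le\norm[\Omega]{\nabla\bmv}$ without extra structure. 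The strong gradient convergence stated in Theorem \ref{thm:convergence} is proved there \emph{using the scheme's energy balance} (testing \eqref{eq:var.glob} with the solution itself), which is unavailable for an arbitrary bounded sequence in $\bmUhkn$.

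\textbf{The different route.} The paper does not redo a Kolmogorov--Riesz argument. Instead it reduces to the already-known Neumann Rellich theorem: set $\ulbmwh=\ulbmvh-\bmIhk\olbm{v}_h$ with $\olbm{v}_h$ the average \eqref{eq:def.ovh}; then $\int_\Omega\bmwh=\bm{0}$ and $\norm[1,h]{\ulbmwh}=\norm[1,h]{\ulbmvh}$, so \cite[Theorems 6.8 and 9.29]{di-pietro.droniou:2020:hybrid} apply componentwise and deliver $\bmw\in H^1(\Omega)^3$, the convergences (i)--(ii) for $\ulbmwh$, and the discrete-trace convergence $\bm{\gamma}_h\ulbmwh\to\bmw_{|\partial\Omega}$ in $L^2(\partial\Omega)^3$. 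Since $(\olbm{v}_h)_h\subset\R^3$ is bounded by Lemma \ref{lem:discrete.poincare}, a further subsequence gives $\olbm{v}_h\to\bm{\xi}$, and $\bmv=\bmw+\bm{\xi}$ inherits all of (i)--(ii). The boundary condition is then immediate: $\bm{\gamma}_h\ulbmvh\to\bmv_{|\partial\Omega}$ and $(\bm{\gamma}_h\ulbmvh)\cdot\bmn=0$ for every $h$ force $\bmv\cdot\bmn=0$. This completely sidesteps the boundary-extension issue you flagged as the delicate point of your approach; the trace convergence comes for free from the Neumann theory rather than from a separate integration-by-parts identification.
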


\begin{proof}
The proof uses the corresponding compactness result \cite[Theorem 6.8]{di-pietro.droniou:2020:hybrid} for Neumann boundary conditions (as well as the straightforward adaptation of \cite[Theorem 9.29]{di-pietro.droniou:2020:hybrid}, stated for Dirichlet boundary conditions, to Neumann boundary conditions). These theorems are established under more restrictive mesh assumptions than Assumption \ref{assum:star.shaped} but, following the approach described in the proof of Lemma \ref{lem:sob.poincare} and using \cite[Lemma B.27]{droniou.eymard:2018:gradient}, they can easily be established under our notion of regular mesh sequence.

Define $\olbm{v}_h\in\R^3$ by \eqref{eq:def.ovh} and set $\ulbmwh=\ulbmvh-\bmIhk\olbm{v}_h$. We have $\norm[1,h]{\ulbmwh}=\norm[1,h]{\ulbmvh}$ and $\int_\Omega\bmwh=\int_\Omega (\bmvh-\olbm{v}_h)=\bm{0}$. Hence, applying \cite[Theorems 6.8 and 9.29]{di-pietro.droniou:2020:hybrid} with $p=2$ to each component of $\ulbmwh$, we find $\bmw\in H^1(\Omega)^3$ such that the convergences (i)--(ii) hold with all letters $\bmv$ replaced by $\bmw$, and such that $\bm{\gamma}_h\ulbmwh\to \bmw_{|\partial\Omega}$, where the discrete trace operator $\bm{\gamma}_h:\bmUhk\to L^2(\partial\Omega)^3$ is defined by: for all $\ulbmzh\in\bmUhk$, $(\bm{\gamma}_h\ulbmzh)_{|F}=\bmzF$ for all $F\in\Fhb$.

Since $(\olbm{v}_h)_h$ is also bounded by \eqref{eq:bound.olbvh}, upon extracting a subsequence from the previous one we can assume that $\olbm{v}_h\to \bm{\xi}\in\R^3$ as $h\to 0$ and thus, setting $\bmv=\bmw+\bm{\xi}\in H^1(\Omega)^3$, we obtain the convergences (i)--(ii), together with $\bm{\gamma}_h\ulbmvh\to \bmv_{|\partial\Omega}$. For all $h\in\mathcal H$ we have $\bm{\gamma}_h\ulbmvh \cdot\bmn=0$ on $\partial\Omega$ (since $\ulbmvh\in \bmUhkn$), which proves by passing to the limit that $\bmv\cdot\bmn=0$ on the boundary and thus that $\bmv\in\bmB$.
\end{proof}
	
\begin{lemma}[Boundedness of the interpolator]\label{lem:bound.IT}
It holds
\begin{equation*}
\norm[1,T]{\bmITk\bmv}\lesssim \norm[T]{\nabla\bmv}\qquad\forall \bmv\in H^1(T)^3.
\end{equation*}
\end{lemma}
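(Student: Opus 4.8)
The plan is to bound each of the two terms in the definition \eqref{eq:local.norm.def} of $\norm[1,T]{\cdot}$ separately, using the fact that $\bmITk\bmv=(\bmpiTzr{k}\bmv,\bmpiFTzr{k}\bmv)$ and the approximation/stability properties of the $L^2$-orthogonal projectors together with the trace inequalities \eqref{eq:continuous.trace} and \eqref{eq:discrete.trace}.

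For the volumetric part $\norm[T]{\nabla\bmpiTzr{k}\bmv}$, I would use the $H^1$-stability (i.e.\ $W^{1,2}$-approximation with the zeroth-order term absorbed via subtraction of a constant) of the $L^2$-projector $\bmpiTzr{k}$: since $\nabla$ annihilates constants, $\nabla\bmpiTzr{k}\bmv=\nabla\bmpiTzr{k}(\bmv-\bmc)$ for any constant $\bmc$, and the $L^2$-boundedness of the projector on each element combined with an inverse inequality (or directly the $W^{1,p}$-approximation estimates of \cite[Theorem 1.48]{di-pietro.droniou:2020:hybrid}, valid under Assumption~\ref{assum:star.shaped} by \cite[Lemma 5]{droniou.yemm:2021:robust}) gives $\norm[T]{\nabla\bmpiTzr{k}\bmv}\lesssim\norm[T]{\nabla\bmv}$.

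For the boundary part $\hT^{-1/2}\norm[\bdryT]{\bmpiFTzr{k}\bmv-\bmpiTzr{k}\bmv}$, I would insert $\bmv$ and use the triangle inequality to split it as $\hT^{-1/2}(\norm[\bdryT]{\bmpiFTzr{k}\bmv-\bmv}+\norm[\bdryT]{\bmv-\bmpiTzr{k}\bmv})$. The first summand is controlled by the $L^2(\bdryT)$-optimality of $\bmpiFTzr{k}$ (it is the best face-wise $L^2$-approximation), so it is bounded by $\hT^{-1/2}\norm[\bdryT]{\bmv-\bmpiTzr{k}\bmv}$; hence everything reduces to bounding $\hT^{-1/2}\norm[\bdryT]{\bmv-\bmpiTzr{k}\bmv}$. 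For this I apply the continuous trace inequality \eqref{eq:continuous.trace} with $p=2$ to $w=\bmv-\bmpiTzr{k}\bmv\in H^1(T)^3$, obtaining $\hT\norm[\bdryT]{w}^2\lesssim\norm[T]{w}^2+\hT^2\norm[T]{\nabla w}^2$, so that $\hT^{-1/2}\norm[\bdryT]{w}\lesssim\hT^{-1}\norm[T]{w}+\norm[T]{\nabla w}$. Then the $L^2$-approximation property of $\bmpiTzr{k}$ (again invariance under constants, so we can subtract a constant before estimating) gives $\hT^{-1}\norm[T]{\bmv-\bmpiTzr{k}\bmv}\lesssim\norm[T]{\nabla\bmv}$, and the $H^1$-stability argument from the first step gives $\norm[T]{\nabla(\bmv-\bmpiTzr{k}\bmv)}\lesssim\norm[T]{\nabla\bmv}$; combining these closes the estimate.

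The only mild subtlety—and the point I would be most careful about—is ensuring all the projector estimates (both the $\hT^{-1}\norm[T]{\bmv-\bmpiTzr{k}\bmv}\lesssim\norm[T]{\nabla\bmv}$ bound and the $H^1$-stability $\norm[T]{\nabla\bmpiTzr{k}\bmv}\lesssim\norm[T]{\nabla\bmv}$) are available with a constant depending only on $\varrho$ and $k$ under the star-shapedness Assumption~\ref{assum:star.shaped}, since we are deliberately avoiding standard (stronger) mesh-regularity hypotheses; this is exactly what \cite[Theorem 1.48]{di-pietro.droniou:2020:hybrid} together with \cite[Lemma 5]{droniou.yemm:2021:robust} provide. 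Everything else is a routine assembly of trace inequalities and projector bounds with no hidden difficulty.
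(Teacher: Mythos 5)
Your proposal is correct and follows essentially the same route as the paper: both arguments bound the volumetric term via the $H^1$-boundedness $\norm[T]{\nabla\bmpiTzr{k}\bmv}\lesssim\norm[T]{\nabla\bmv}$ and reduce the boundary term to the estimate $\norm[\bdryT]{\bmpiFTzr{k}\bmv-\bmpiTzr{k}\bmv}\lesssim \hT^{1/2}\norm[T]{\nabla\bmv}$, which the paper simply cites from \cite[Lemma 7]{droniou.yemm:2021:robust} while you unfold its proof (insertion of $\bmv$, $L^2(\bdryT)$-optimality of $\bmpiFTzr{k}$, the trace inequality \eqref{eq:continuous.trace}, and the projector approximation bounds valid under Assumption~\ref{assum:star.shaped}). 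Your concern about the dependence of the constants only on $\varrho$ and $k$ is exactly the point the paper flags, and it is resolved by the same references you invoke.
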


\begin{proof}
The proof follows \cite[Proposition 2.2]{di-pietro.droniou:2020:hybrid}; we just provide some details to show that the relaxed mesh regularity assumptions and change of discrete norm, compared to this reference, do not impact the result. Using the boundedness $\norm[T]{\nabla\bmpiTzr{k}\bmv}\lesssim \norm[T]{\nabla\bmv}$ (see \cite[Eq.~(1.77)]{di-pietro.droniou:2020:hybrid}) we have
\[
\norm[1,T]{\bmITk\bmv}^2\lesssim \norm[T]{\nabla\bmv}^2+h_T^{-1}\norm[\partial T]{\bmpiFTzr{k}\bmv-\bmpiTzr{k}\bmv}^2.
\]
\cite[Lemma 7]{droniou.yemm:2021:robust} shows that $\norm[\partial T]{\bmpiFTzr{k}\bmv-\bmpiTzr{k}\bmv}\lesssim h_T^{\frac12}\norm[T]{\nabla\bmv}$, which concludes the proof.
\end{proof}

\subsection{Properties of the discrete forms}

We state here the design properties of each of the bilinear forms $\ah$ and $\dh$, and the trilinear form $\th$. Most of these properties have already been obtained under the assumption of zero Dirichlet boundary conditions in \cite[Chapters 8, 9]{di-pietro.droniou:2020:hybrid}. However, we require several of these results with boundary conditions that either only impose vanishing normal components, and/or vanishing tangential curl of vector fields (see \eqref{eq:mhd.bc.b}, \eqref{eq:mhd.bc.curlb}). Moreover, another difference with respect to \cite[Chapters 8, 9]{di-pietro.droniou:2020:hybrid} is that we consider here weaker assumptions on the meshes, and that the discrete norm defined by \eqref{eq:local.norm.def} differs from that of \cite[Eq.~(8.15)]{di-pietro.droniou:2020:hybrid} due to the change of scaling $\hF^{-1}\to\hT^{-1}$. We therefore provide proofs of these results, focusing on the modifications coming from the different boundary conditions and mesh assumptions. 

\begin{proposition}[Properties of $\ah$]\label{prop:ah.properties}
	The discrete bilinear form $\ah$ satisfies the following properties:
	\begin{enumerate}[label=\normalfont(A\arabic*),ref=\normalfont(A\arabic*)]
		\item\label{item:ah.property.stab} Stability and Boundedness. There exists a $C_{\rma}\ge 0$ independent of $h$ such that, for all $\ulbmvh \in \bmUhk$,
		\begin{equation}\label{eq:ah.stab.and.bound}
			C_{\rma}^{-1} \norm[1, h]{\ulbmvh}^2 \le \ah(\ulbmvh, \ulbmvh) \le C_{\rma} \norm[1, h]{\ulbmvh}^2.
		\end{equation}
		\item\label{item:ah.property.cons} Consistency. 
		For all $\bmw \in \bmB \cap H^{k+2}(\Th)^3$ such that $\Delta\bmw\in\LTWO(\Omega)^3$ and $\bmn_{\Omega}\times(\nabla \times \bmw)=\bm{0}$ on $\bdry\Omega$, and for all $\ulbmvh \in \bmUhkn$,
		\begin{equation}\label{eq:ah.consistency}
			\left|-\int_{\Omega} \Delta \bmw \cdot \bmvh - \ah(\bmIhk \bmw, \ulbmvh)\right| \lesssim \norm[1,h]{\ulbmvh} h^{k+1} \seminorm[H^{k+2}(\Th)^3]{\bmw}. 
		\end{equation}	
	\end{enumerate}
\end{proposition}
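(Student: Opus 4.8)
The plan is to obtain \ref{item:ah.property.stab} essentially for free from the local design property \eqref{eq:aT.norm.equivalence}, and to prove \ref{item:ah.property.cons} by the standard HHO consistency computation, the only genuinely new point being a cancellation of a boundary term that is enforced here by the condition $\bmn_\Omega\times(\nabla\times\bmw)=\bm{0}$ rather than by homogeneous Dirichlet data. Concretely, for \ref{item:ah.property.stab} one sums \eqref{eq:aT.norm.equivalence} over $T\in\Th$ and recalls $\ah(\ulbmvh,\ulbmvh)=\sum_{T\in\Th}\aT(\ulbmvT,\ulbmvT)$ and $\norm[1,h]{\ulbmvh}^2=\sum_{T\in\Th}\norm[1,T]{\ulbmvT}^2$; this immediately gives \eqref{eq:ah.stab.and.bound} with $C_\rma$ the (mesh-independent) constant hidden in \eqref{eq:aT.norm.equivalence}.

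For \ref{item:ah.property.cons}, I would first set $\check{\bmw}_T:=\rT{k+1}\bmITk\bmw=\bmpiTe{k+1}\bmw\in\POLY{k+1}(T)^3$, the first equality being the commutation property \eqref{eq:rT.and.DT.commutation}, and rewrite $\ah(\bmIhk\bmw,\ulbmvh)$ by reading the definition \eqref{eq:rT.def} of $\rT{k+1}$ against the admissible polynomial test function $\check{\bmw}_T$; this expresses it as $\sum_{T\in\Th}\big(\brac[T]{\nabla\bmvT,\nabla\check{\bmw}_T}+\int_{\bdryT}(\bmvFT-\bmvT)\cdot(\norT\cdot\nabla)\check{\bmw}_T+\sT(\bmITk\bmw,\ulbmvT)\big)$. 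In parallel, since $\bmw\in\HONE(\Omega)^3$ is piecewise $H^2$ with $\Delta\bmw\in\LTWO(\Omega)^3$, an elementwise integration by parts yields $-\int_\Omega\Delta\bmw\cdot\bmvh=\sum_{T\in\Th}\big(\brac[T]{\nabla\bmvT,\nabla\bmw}-\int_{\bdryT}[(\norT\cdot\nabla)\bmw]\cdot\bmvT\big)$, where $[(\norT\cdot\nabla)\bmw]_i:=\norT\cdot\nabla w_i$. Writing $\bmvT=(\bmvT-\bmvFT)+\bmvFT$ in this boundary integral and subtracting, the consistency error $-\int_\Omega\Delta\bmw\cdot\bmvh-\ah(\bmIhk\bmw,\ulbmvh)$ becomes $\sum_{T\in\Th}\big(\brac[T]{\nabla(\bmw-\check{\bmw}_T),\nabla\bmvT}+\int_{\bdryT}[(\norT\cdot\nabla)(\bmw-\check{\bmw}_T)]\cdot(\bmvFT-\bmvT)-\sT(\bmITk\bmw,\ulbmvT)\big)$, provided the global face term $\sum_{T\in\Th}\int_{\bdryT}[(\norT\cdot\nabla)\bmw]\cdot\bmvFT$ vanishes.

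Proving this vanishing is the step I expect to be the main obstacle, since it is exactly where the argument departs from the zero-Dirichlet analysis of \cite[Chapter 8]{di-pietro.droniou:2020:hybrid} (there $\bmvFT\equiv\bm{0}$ on $\partial\Omega$ makes it trivial). Regrouping the sum face by face: on an interior face the two element contributions cancel because $\bmvFT$ is single-valued and $\nabla w_i\cdot\bmn$ has no jump across it (as $\nabla w_i\in H(\DIV;\Omega)$, a consequence of $\Delta\bmw\in\LTWO(\Omega)^3$). On a boundary face $F\in\Fhb$ I would invoke Lemma \ref{lem:boundary.term} in its polyhedral form \eqref{eq:bdry.cancellation.polyhedral}, applied with the fields $\bmv,\bmw$ of that statement taken respectively as $\bmw$ and $\bmvFT$ here — admissible since $\bmw\cdot\bmn_\Omega=0$ ($\bmw\in\bmB$) and $\bmvFT\cdot\bmn_\Omega=0$ ($\ulbmvh\in\bmUhkn$) — to obtain $[(\norT\cdot\nabla)\bmw]\cdot\bmvFT=-\bmvFT\cdot[\bmn_\Omega\times(\nabla\times\bmw)]$ on $F$, which is zero by the hypothesis $\bmn_\Omega\times(\nabla\times\bmw)=\bm{0}$ on $\partial\Omega$.

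It then remains to bound the three local terms, which is routine. The first is handled by Cauchy--Schwarz and the $\HONE$-approximation of the elliptic projector, $\norm[T]{\nabla(\bmw-\bmpiTe{k+1}\bmw)}\lesssim\hT^{k+1}\seminorm[H^{k+2}(T)^3]{\bmw}$; the second by Cauchy--Schwarz on $\bdryT$, the bound $\norm[\bdryT]{\bmvFT-\bmvT}\le\hT^{1/2}\norm[1,T]{\ulbmvT}$, the continuous trace inequality \eqref{eq:continuous.trace} with $p=2$ applied to $\nabla(\bmw-\check{\bmw}_T)$, and the $\HONE$- and $H^2$-approximation of $\bmpiTe{k+1}$; the third by the Cauchy--Schwarz inequality for the positive semi-definite form $\sT$, its consistency \eqref{eq:sT.consistency}, and $\sT(\ulbmvT,\ulbmvT)\le\aT(\ulbmvT,\ulbmvT)\lesssim\norm[1,T]{\ulbmvT}^2$ from \eqref{eq:aT.norm.equivalence}. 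Each term is thus bounded by $\hT^{k+1}\seminorm[H^{k+2}(T)^3]{\bmw}\,\norm[1,T]{\ulbmvT}$ up to a mesh-independent constant; summing over $T\in\Th$ and using a discrete Cauchy--Schwarz inequality delivers \eqref{eq:ah.consistency}. All hidden constants are $h$-independent because, as recalled in the excerpt, the elliptic-projector approximation estimates and the trace inequalities hold under Assumption \ref{assum:star.shaped}.
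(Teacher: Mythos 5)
Your proposal is correct and follows essentially the same route as the paper's proof: summing the local norm equivalence for \ref{item:ah.property.stab}, and for \ref{item:ah.property.cons} the same elementwise integration by parts, the same face-by-face cancellation of $\sum_{T}\int_{\bdryT}\bmvFT\cdot(\norT\cdot\nabla)\bmw$ (interior faces by continuity of $(\nabla\bmw)\norTF$, boundary faces via \eqref{eq:bdry.cancellation.polyhedral} and the hypotheses $\bmw\cdot\bmn=0$, $\bmvF\cdot\bmn=0$, $\bmn\times(\nabla\times\bmw)=\bm{0}$), followed by the commutation property and the standard trace/approximation/stabilisation bounds. The only cosmetic difference is that the paper uses the $H^1$-orthogonality of $\bmpiTe{k+1}$ to cancel the volume term $\brac[T]{\nabla(\bmw-\bmpiTe{k+1}\bmw),\nabla\bmvT}$ exactly, whereas you estimate it by the elliptic projector's approximation property, which yields the same $\mathcal{O}(h^{k+1})$ bound.
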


\begin{proof}	
	The stability and boundedness \ref{item:ah.property.stab} follows trivially from the properties of $\aT$ stated in Assumption \eqref{eq:aT.norm.equivalence}. Let us turn to \ref{item:ah.property.cons}.
	An integration by parts in each element $T\in\Th$ yields
	\[
			-\int_{\Omega} \Delta \bmw \cdot \bmvh 
			= \sum_{T\in\Th}\SqBrac{\int_{T} \nabla \bmvT : \nabla \bmw - \int_{\bdryT}\bmvT\cdot(\norT \cdot \nabla) \bmw}.
	\]
	Let us also consider	
	\begin{align*}
		 \sum_{T\in\Th}\int_{\bdryT}\bmvFT\cdot(\norT \cdot \nabla) \bmw \eq \sum_{T\in\Th}\sum_{F\in\Fh[T]} \int_{F}\bmvF\cdot(\norTF \cdot \nabla) \bmw \nl
		 \eq \sum_{F\in\Fh}\sum_{T\in\Th[F]} \int_{F}\bmvF\cdot(\norTF \cdot \nabla) \bmw \nl
		 \eq \sum_{F\in\Fhb}\int_{F}\bmvF\cdot(\norF \cdot \nabla) \bmw = 0,
	\end{align*}
	where the integrals on each of the internal faces have cancelled in the third line due to continuity of $(\nabla\bmw) \norTF$ (since $\Delta\bmw\in\LTWO(\Omega)^3$ and $\bmw\in\HS{2}(\Th)^3$), and the conclusion follows from the boundary conditions on $\ulbmvh$ and $\bmw$, and equation \eqref{eq:bdry.cancellation.polyhedral}. Therefore, we may write
	\begin{equation}\label{eq:ah.cons.proof.1}
		-\int_{\Omega} \Delta \bmw \cdot \bmvh = \sum_{T\in\Th}\SqBrac{\int_{T} \nabla \bmvT : \nabla \bmw + \int_{\bdryT}(\bmvFT - \bmvT)\cdot(\bmn_T \cdot \nabla) \bmw}.
	\end{equation}
	
	By the commutation property \eqref{eq:rT.and.DT.commutation}, and invoking the definition \eqref{eq:rT.def} of $\rT{k+1}\ulbmvT$, it holds that	
	\begin{equation}
	\begin{aligned}
	\label{eq:ah.cons.proof.2}
		\ah(\bmIhk \bmw, \ulbmvh)={}& \sum_{T\in\Th}\SqBrac{\int_{T} \nabla \bmvT : \nabla \bmpiTe{k+1}\bmw + \int_{\bdryT}(\bmvFT - \bmvT)\cdot(\bmn_T \cdot \nabla) \bmpiTe{k+1}\bmw}\\
		&+\sum_{T\in\Th}\rms_T(\bmITk \bmw, \ulbmvT).
  \end{aligned}
	\end{equation}	
	Thus, subtracting \eqref{eq:ah.cons.proof.2} from \eqref{eq:ah.cons.proof.1}, noting the $H^1$-orthogonality of $\bmpiTe{k+1}$, 	
	\begin{align*}
		-\int_{\Omega} \Delta \bmw{}& \cdot \bmvh - \ah(\bmIhk \bmw, \ulbmvh) = \sum_{T\in\Th}\int_{\bdryT}(\bmvFT - \bmvT)\cdot(\bmn_T \cdot \nabla) (\bmw - \bmpiTe{k+1}\bmw) -\sum_{T\in\Th}\rms_T(\bmITk \bmw, \ulbmvT)\nl
		\les \sum_{T\in\Th}\hT^{-\frac12}\norm[\bdryT]{\bmvFT - \bmvT}\Brac{\seminorm[\HS{1}(T)]{\bmw - \bmpiTe{k+1}\bmw} + \hT\seminorm[\HS{2}(T)]{\bmw - \bmpiTe{k+1}\bmw}}\nl
		&+\left(\sum_{T\in\Th}\rms_T(\bmITk \bmw, \bmITk\bmw)\right)^\frac12
		\left(\sum_{T\in\Th}\rms_T(\ulbmvT, \ulbmvT)\right)^\frac12,
	\end{align*}
	where we have invoked a Cauchy--Schwarz inequality and the continuous trace inequality \eqref{eq:continuous.trace}. The proof then follows from a discrete Cauchy--Schwarz inequality, the volumetric approximation properties \cite[Theorem 1.48]{di-pietro.droniou:2020:hybrid} of the elliptic projector, the consistency property \eqref{eq:sT.consistency} of the stabilisation form, and the norm equivalence \eqref{eq:aT.norm.equivalence}.
\end{proof}

\begin{proposition}[Properties of $\dh$]\label{prop:dh.properties}
The discrete bilinear form $\dh$ satisfies the following properties:
\begin{enumerate}[label=\normalfont(D\arabic*),ref=\normalfont(D\arabic*)]
	\item\label{item:dh.property.stab} Inf-sup stability. There exists a $C_{\rmd}> 0$ independent of $h$ such that, for all $q_h \in \POLY{k}_0(\Th)$,
	\begin{equation}\label{eq:dh.boundedness}
		C_{\rmd}^{-1}\norm[\Omega]{q_h} \le \sup_{\ulbmvh \in \bmUhkzr \backslash \{\ulbm{0}\}} \frac{\dh(\ulbmvh, q_h)}{\norm[1,h]{\ulbmvh}}\le \sup_{\ulbmvh \in \bmUhkn \backslash \{\ulbm{0}\}} \frac{\dh(\ulbmvh, q_h)}{\norm[1,h]{\ulbmvh}}.
	\end{equation}
	\item\label{item:dh.property.cons} Consistency. For all $s\in\HONE(\Omega)\cap\HS{k+1}(\Th)$ and $\ulbmvh\in\bmUhkn$,
	\begin{equation}\label{eq:dh.consistency}
		\left|\int_\Omega \bmvh \cdot \nabla s - \dh(\ulbmvh, \pihzr{k}s)\right| \\ \lesssim h^{k+1} \seminorm[H^{k+1}(\Th)]{s}\norm[1,h]{\ulbmvh}. 
	\end{equation}			
\end{enumerate}
\end{proposition}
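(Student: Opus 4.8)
For the inf-sup stability \ref{item:dh.property.stab}, the plan is a Fortin-type argument resting on the commutation property \eqref{eq:rT.and.DT.commutation} and the interpolator bound of Lemma \ref{lem:bound.IT}. Fix $q_h\in\POLY{k}_0(\Th)$ with $q_h\neq 0$. Since $q_h$ has zero mean over $\Omega$, I would invoke the classical surjectivity of the divergence from $\HONEzr(\Omega)^3$ onto mean-zero $\LTWO$ functions --- valid on any bounded Lipschitz domain --- to get $\bmv\in\HONEzr(\Omega)^3$ with $\DIV\bmv=-q_h$ and $\norm[\HONE(\Omega)^3]{\bmv}\lesssim\norm[\Omega]{q_h}$. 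Setting $\ulbmvh\defeq\bmIhk\bmv$, the vanishing of $\bmv$ on $\partial\Omega$ forces the boundary face components of $\ulbmvh$ to vanish, so $\ulbmvh\in\bmUhkzr$. Then \eqref{eq:rT.and.DT.commutation} and $q_h|_T\in\POLY{k}(T)$ give $\DT{k}\ulbmvT=\piTzr{k}(\DIV\bmv)=-q_h|_T$ on each $T\in\Th$, whence $\dh(\ulbmvh,q_h)=-\sum_{T\in\Th}\brac[T]{\DT{k}\ulbmvT,q_h}=\norm[\Omega]{q_h}^2$. Lemma \ref{lem:bound.IT} yields $\norm[1,h]{\ulbmvh}^2=\sum_{T\in\Th}\norm[1,T]{\bmITk\bmv}^2\lesssim\norm[\Omega]{\nabla\bmv}^2\lesssim\norm[\Omega]{q_h}^2$, so $\dh(\ulbmvh,q_h)/\norm[1,h]{\ulbmvh}$ is bounded below by a multiple of $\norm[\Omega]{q_h}$; this gives the first inequality of \eqref{eq:dh.boundedness}, and the second is immediate from $\bmUhkzr\subset\bmUhkn$.

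For the consistency \ref{item:dh.property.cons}, I would unfold $\dh(\ulbmvh,\pihzr{k}s)$ element by element using the definition \eqref{eq:DT.def} of $\DT{k}$ with the degree-$k$ test function $\pihzr{k}s|_T=\piTzr{k}s$, and then integrate by parts twice inside each $T$, slipping $\piTzr{k}$ on and off with the help of $\DIV\bmvT\in\POLY{k}(T)$. This rewrites $\int_\Omega\bmvh\cdot\nabla s-\dh(\ulbmvh,\pihzr{k}s)$ as a sum of face integrals in which the terms carrying $\bmvT$ will combine with those carrying $\bmvFT$ once one observes that
\[
\sum_{T\in\Th}\int_{\bdryT}(\bmvFT\cdot\norT)\,s=0 ,
\]
which holds because $s\in\HONE(\Omega)$ has a single-valued trace on interior faces while $\norT$ changes sign there, and because $\bmvF\cdot\norF=0$ on $\partial\Omega$ for $\ulbmvh\in\bmUhkn$ --- note that only the normal face component enters, so this weaker-than-homogeneous boundary condition suffices. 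The outcome should be the clean identity
\[
\int_\Omega\bmvh\cdot\nabla s-\dh(\ulbmvh,\pihzr{k}s)=\sum_{T\in\Th}\int_{\bdryT}\big((\bmvFT-\bmvT)\cdot\norT\big)(\piTzr{k}s-s) .
\]

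To finish, I would estimate this right-hand side: a Cauchy--Schwarz inequality on each $\bdryT$ followed by a discrete Cauchy--Schwarz with weights $\hT^{\pm 1/2}$ bounds it by $\big(\sum_{T\in\Th}\hT^{-1}\norm[\bdryT]{\bmvFT-\bmvT}^2\big)^{1/2}\big(\sum_{T\in\Th}\hT\norm[\bdryT]{\piTzr{k}s-s}^2\big)^{1/2}$, the first factor being $\le\norm[1,h]{\ulbmvh}$ by \eqref{eq:local.norm.def}. For the second factor, the continuous trace inequality \eqref{eq:continuous.trace} applied to $s-\piTzr{k}s$ together with the optimal approximation properties of $\piTzr{k}$ (valid under Assumption \ref{assum:star.shaped}) give $\hT\norm[\bdryT]{s-\piTzr{k}s}^2\lesssim\hT^{2(k+1)}\seminorm[H^{k+1}(T)]{s}^2\le h^{2(k+1)}\seminorm[H^{k+1}(T)]{s}^2$, so summing over $T\in\Th$ and taking the square root produces the factor $h^{k+1}\seminorm[H^{k+1}(\Th)]{s}$ and hence \eqref{eq:dh.consistency}. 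Both parts are largely bookkeeping once the structure is set up; the steps I expect to require the most care are the use of the normal-trace-only boundary condition of $\bmUhkn$ in the face cancellation above (full homogeneity is not available here), and, for \ref{item:dh.property.stab}, the appeal to surjectivity of the divergence on a general bounded Lipschitz domain --- for which, importantly, no convexity or smoothness assumption on $\Omega$ is needed.
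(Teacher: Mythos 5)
Your proposal is correct and follows essentially the same route as the paper: for the consistency part you arrive at exactly the paper's identity (the volume terms cancelling by $L^2$-orthogonality of $s-\piTzr{k}s$ to $\POLY{k}(T)$, the face terms handled via the single-valuedness of $s$ and the normal-trace condition on $\bmUhkn$) and the same trace-plus-approximation estimate. For the inf-sup part the paper simply cites \cite[Lemma 8.12]{di-pietro.droniou:2020:hybrid} and checks that only the interpolator bound (Lemma \ref{lem:bound.IT}) needs re-verifying under the modified norm, whereas you write out the underlying Fortin argument (surjectivity of the divergence, commutation \eqref{eq:rT.and.DT.commutation}, boundedness of $\bmIhk$) in full --- but that is precisely the content of the cited lemma, so the two are the same proof at different levels of detail.
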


\begin{proof}	
	The first inequality in \ref{item:dh.property.stab} is established in \cite[Lemma 8.12]{di-pietro.droniou:2020:hybrid}, for a slightly different discrete $H^1$-norm with the scaling $h_T^{-1}$ in \eqref{eq:local.norm.def} replaced by local face-based scaling $h_F^{-1}$. This change actually only impacts, in this proof, the boundedness of the interpolator $\bmITk$, which we established in Lemma \ref{lem:bound.IT} for the norm with the scaling $h_T^{-1}$.
	The second inequality in \ref{item:dh.property.stab} follows from the inclusion $\bmUhkzr\subset \bmUhkn$.

  We now turn to \ref{item:dh.property.cons}.	
%
%
	It follows from the definition \eqref{eq:ah.and.dh.def} of $\dh$ and integrating by parts the defining equation \eqref{eq:DT.def} of $\DT{k}$ that
	\begin{equation}\label{eq:dh.property.cons.proof.1}
		\dh(\ulbmvh, \pihzr{k}s) = \sum_{T\in\Th}-\int_T \DT{k}\ulbmvT \piTzr{k}s 
		= \sum_{T\in\Th}\SqBrac{-\int_T (\nabla \cdot\bmvT) \piTzr{k}s + \int_{\partial T}(\bmvT - \bmvFT)\cdot\norT \piTzr{k}s}.
	\end{equation}
	
	Since $\ulbmvh\in\bmUhkn$ and $s\in\HONE(\Omega)$ we have 
	\[
			\sum_{T\in\Th}\int_{\bdryT}(\bmvFT\cdot\norT) s = 0.
	\]
	Hence, integrating by parts on each $T\in\Th$ and introducing the term above,
	\begin{equation}\label{eq:dh.property.cons.proof.2}
	\int_\Omega \bmvh \cdot \nabla s=\sum_{T\in\Th}\SqBrac{-\int_T (\nabla \cdot\bmvT) s + \int_{\bdryT}(\bmvT - \bmvFT)\cdot\norT s}.
	\end{equation}
	Therefore, combining equations \eqref{eq:dh.property.cons.proof.1} and \eqref{eq:dh.property.cons.proof.2}, and noting that $s-\piTzr{k}s$ is $\LTWO(T)$-orthogonal to $\nabla\cdot\bmvT\in\POLY{k}(T)$,
	\begin{align*}
		\Big|\int_\Omega \bmvh \cdot \nabla s - \dh(\ulbmvh, \pihzr{k}s)\Big| \eq \Big|\sum_{T\in\Th} \int_{\partial T}(\bmvT - \bmvFT)\cdot\norT (s - \piTzr{k}s) \Big|\nl
		 \les \sum_{T\in\Th}\hT^{-\frac12}\norm[\bdryT]{\bmvFT - \bmvT} \Brac{\norm[\T]{s - \piTzr{k}s} + \hT\norm[\T]{\nabla(s - \piTzr{k}s)}}
	\end{align*}
	where we have applied a Cauchy--Schwarz inequality to the integral and invoked the continuous trace inequality \eqref{eq:continuous.trace}. The proof then follows by invoking the volumetric approximation properties \cite[Theorem 1.45]{di-pietro.droniou:2020:hybrid} of $\piTzr{k}$ and applying a discrete Cauchy--Schwarz inequality to the sum.
\end{proof}

\begin{proposition}[Properties of $\th$]\label{prop:th.properties}
	The discrete trilinear form $\th$ satisfies the following properties:
	\begin{enumerate}[label=\normalfont(T\arabic*),ref=\normalfont(T\arabic*)]
		\item\label{item:th.property.skew.sym} Skew-symmetry. For all $\ulbmvh, \ulbmwh, \ulbmzh \in \bmUhk$
		\begin{equation}\label{eq:th.skew.sym}
			\th(\ulbmvh, \ulbmwh, \ulbmzh) = -\th(\ulbmvh, \ulbmzh, \ulbmwh).
		\end{equation}
		\item\label{item:th.property.bound} Boundedness. There exists $C_{\rmt}\ge 0$ independent of $h$ such that, for all $\ulbmvh, \ulbmwh, \ulbmzh \in \bmUhkn$,
		\begin{equation}\label{eq:th.boundedness}
			|\th(\ulbmvh, \ulbmwh, \ulbmzh)| \le C_{\rmt} \norm[1, h]{\ulbmvh} \norm[1, h]{\ulbmwh} \norm[1, h]{\ulbmzh}.
		\end{equation}
		\item\label{item:th.property.cons} Consistency. For all $\bmv, \bmw \in W^{1,4}(\Omega)^3 \cap W^{k+1,4}(\Th)^3$ such that $\nabla \cdot \bmv= 0$ and $\bmv \cdot \nor_{\Omega} = 0$ and for all $\ulbmzh \in \bmUhk$,
		\begin{multline}\label{eq:th.consistency}
		 \SEMINORM{\int_{\Omega}(\bmv\cdot\nabla)\bmw\cdot\bmzh - \th(\bmIhk\bmv, \bmIhk\bmw, \ulbmzh)} \\ \lesssim \norm[1,h]{\ulbmzh} h^{k+1} \Brac{\norm[\WSP{1, 4}(\Omega)^3]{\bmv} \seminorm[\WSP{k + 1, 4}(\Th)^3]{\bmw} + \norm[\WSP{1, 4}(\Omega)^3]{\bmw} \seminorm[\WSP{k + 1, 4}(\Th)^3]{\bmv}}. 
		\end{multline}			
	\end{enumerate}
\end{proposition}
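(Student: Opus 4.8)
Skew-symmetry \ref{item:th.property.skew.sym} is immediate: exchanging the last two arguments in the defining formula \eqref{eq:tT.def} of $\tT$ swaps the two integrals and flips the overall sign, and this passes to $\th$ by summation over $\Th$. For boundedness \ref{item:th.property.bound}, I would expand every term $\int_T(\bmvT\cdot\GT{2k})\ulbmwT\cdot\bmzT$ through identity \eqref{eq:GT.expanded}, apply to the volumetric part $\int_T(\bmvT\cdot\nabla)\bmwT\cdot\bmzT$ a H\"older inequality with exponents $(4,2,4)$, and to the boundary part $\int_{\bdryT}(\bmvT\cdot\norT)(\bmwFT-\bmwT)\cdot\bmzT$ a H\"older inequality on $\bdryT$ followed by the discrete trace inequality \eqref{eq:discrete.trace} to convert the $L^4(\bdryT)$-norms of $\bmvT,\bmzT$ into $h_T$-weighted $L^4(T)$-norms, the powers of $h_T$ exactly matching the $h_T^{-1}$ in \eqref{eq:local.norm.def}. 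Summing over $T$ with a discrete H\"older inequality of the same exponents and invoking the discrete Sobolev embedding of Lemma \ref{lem:discrete.poincare.sobolev} (hence also $\norm[\LP{4}(\Omega)^3]{\bmvh}\lesssim\norm[1,h]{\ulbmvh}$, $\Omega$ being bounded) on the factors built from $\bmvh$ and $\bmzh$ then yields \eqref{eq:th.boundedness}; this is the only place the restriction to $\bmUhkn$ is used.

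For the consistency estimate \ref{item:th.property.cons}, the idea is to recast both sides into a ``one-sided'' form in which $\ulbmzh$ appears only through the quantities $\nabla\bmzT$ and $\bmzFT-\bmzT$ that are controlled by $\norm[1,h]{\ulbmzh}$. On the continuous side, an element-wise integration by parts using $\nabla\cdot\bmv=0$ gives $\int_T(\bmv\cdot\nabla)\bmw\cdot\bmzT=-\int_T(\bmv\cdot\nabla)\bmzT\cdot\bmw+\int_{\bdryT}(\bmv\cdot\norT)\,\bmw\cdot\bmzT$; summing over $T$ and subtracting the quantity $\sum_{T\in\Th}\int_{\bdryT}(\bmv\cdot\norT)\,\bmw\cdot\bmzFT$, which vanishes because $\bmw$ and $\bmzFT$ are single-valued on interior faces while the outer normals of the two adjacent elements are opposite, and because $\bmv\cdot\bmn_{\Omega}=0$ on $\bdry\Omega$, recasts $\int_\Omega(\bmv\cdot\nabla)\bmw\cdot\bmzh$ in the desired form. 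On the discrete side, besides \eqref{eq:GT.expanded} I would use the companion identity $\int_T(\bmvT\cdot\GT{2k})\ulbmwT\cdot\bmzT=-\int_T(\bmvT\cdot\nabla)\bmzT\cdot\bmwT-\int_T(\nabla\cdot\bmvT)(\bmwT\cdot\bmzT)+\int_{\bdryT}(\bmvT\cdot\norT)\,\bmwFT\cdot\bmzT$, obtained by testing the definition of $\GT{2k}$ against the rank-one tensor $\bmzT\otimes\bmvT\in\POLY{2k}(T)^{3\times3}$ and integrating by parts. Evaluated at $\ulbmvT=\bmITk\bmv$ and $\ulbmwT=\bmITk\bmw$, the divergence term involves $\nabla\cdot\bmpiTzr{k}\bmv=\nabla\cdot(\bmpiTzr{k}\bmv-\bmv)$, which I would shrink directly (it is $O(h_T^{k})$ in $L^4(T)$) or rewrite, via the commutation $\DT{k}\bmITk\bmv=\piTzr{k}(\nabla\cdot\bmv)=0$ from \eqref{eq:rT.and.DT.commutation}, as a face contribution carrying $(\bmpiFTzr{k}\bmv-\bmpiTzr{k}\bmv)\cdot\norT$.

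With both sides in one-sided form, I would estimate their difference term by term, inserting the projections $\bmpiTzr{k}\bmv,\bmpiTzr{k}\bmw$ and their faces, and organising each trilinear expression so that one factor is a small quantity $\bmv-\bmpiTzr{k}\bmv$, $\bmw-\bmpiTzr{k}\bmw$, or $\bmpiFTzr{k}\bmw-\bmpiTzr{k}\bmw=\bmpiFTzr{k}(\bmw-\bmpiTzr{k}\bmw)$ --- contributing $h_T^{k+1}$ times a $\WSP{k+1,4}$-seminorm through the optimal approximation properties of $L^2$-projectors (\cite[Theorem 1.45]{di-pietro.droniou:2020:hybrid}) and, on boundary contributions, the continuous trace inequality \eqref{eq:continuous.trace} with $p=4$ --- one factor is a bounded $\WSP{1,4}$-norm of $\bmv$ or $\bmw$ (using $L^4$- and $\WSP{1,4}$-stability of the $L^2$-projector, and $\WSP{1,4}(\Omega)\hookrightarrow\LP{\infty}(\Omega)$ in dimension three where convenient), and the last factor is $\nabla\bmzT$ or $h_T^{-1/2}(\bmzFT-\bmzT)$. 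A discrete H\"older inequality with exponents $(4,4,2)$ across the elements then produces exactly the two terms $\norm[\WSP{1,4}(\Omega)^3]{\bmv}\seminorm[\WSP{k+1,4}(\Th)^3]{\bmw}$ and $\norm[\WSP{1,4}(\Omega)^3]{\bmw}\seminorm[\WSP{k+1,4}(\Th)^3]{\bmv}$ on the right-hand side of \eqref{eq:th.consistency}, multiplied by $h^{k+1}\norm[1,h]{\ulbmzh}$. (A residual family of terms in which $\bmzT$ survives undifferentiated is controlled by $h^{k+1}(\cdots)\norm[\Omega]{\bmzh}$; since the estimate is applied with $\ulbmzh\in\bmUhkn$, the discrete Poincar\'e inequality \eqref{eq:discrete.poincare} absorbs $\norm[\Omega]{\bmzh}$ into $\norm[1,h]{\ulbmzh}$.)

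I expect the organisation of the boundary contributions to be the main obstacle: several of them arise --- from the element-wise integration by parts on the continuous side, from the two expansions of $\th$, and from the divergence--commutation step --- and they must be combined so that the only surviving $\ulbmzh$-dependence is through $\bmzFT-\bmzT$, which again relies on the single-valuedness of face quantities on interior faces together with $\bmv\cdot\bmn_{\Omega}=0$ on $\bdry\Omega$; moreover the powers of $h_T$ generated by the trace inequalities have to be tracked locally, before summation, so that the $h_T^{-1}$ scaling of \eqref{eq:local.norm.def} is exactly compensated. The volumetric terms are comparatively routine once the H\"older exponents dictated by the $\WSP{1,4}\times\WSP{k+1,4}\times\HS{1}$ structure in three dimensions have been fixed.
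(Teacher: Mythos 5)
Your treatments of \ref{item:th.property.skew.sym} and \ref{item:th.property.bound} are correct and essentially the paper's own: the skew-symmetry is read off \eqref{eq:tT.def}, and the boundedness is exactly the $(4,2,4)$ H\"older argument combined with the discrete trace inequality \eqref{eq:discrete.trace} and the Sobolev embedding of Lemma \ref{lem:discrete.poincare.sobolev} (the paper delegates this to \cite[Proposition 9.17]{di-pietro.droniou:2020:hybrid}). Your remark that membership in $\bmUhkn$ is what makes the embedding available is also correct, and applies equally to the consistency estimate.

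The consistency proof \ref{item:th.property.cons}, however, has a genuine gap, and it sits exactly where you predict trouble. The ``one-sided'' recasting cannot be estimated term by term. First, your continuous-side boundary contribution $\int_{\bdryT}(\bmv\cdot\norT)\,\bmw\cdot(\bmzT-\bmzFT)$ carries coefficient $1$, while the discrete side (applying \eqref{eq:GT.expanded} to the second term of \eqref{eq:tT.def}) produces the matching term only with coefficient $\tfrac12$; the difference therefore retains a non-small multiple of $\int_{\bdryT}(\bmv\cdot\norT)\,\bmw\cdot(\bmzT-\bmzFT)$, which in isolation is only $O(h_T^{3/2})\norm[1,T]{\ulbmzT}$. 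Second, the residuals of your companion identity are not $O(h^{k+1})$ either: you concede that $\int_T(\nabla\cdot\bmpiTzr{k}\bmv)(\bmpiTzr{k}\bmw\cdot\bmzT)$ is only $O(h_T^{k})$, and the commutation rewrite does not rescue it because $\bmpiTzr{k}\bmw\cdot\bmzT\in\POLY{2k}(T)$ cannot be tested against $\DT{k}\bmITk\bmv=0$ (which only sees $\POLY{k}(T)$), while the face term it would generate pairs an $O(h_T^{k+1/2})$ trace of a projection error with the trace of $\bmzT$, which costs a further $h_T^{-1/2}$ by \eqref{eq:discrete.trace} --- again $O(h_T^{k})$. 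The same half-power loss hits every boundary residual in which $\bmzT$ or $\bmzFT$ survives undifferentiated: after peeling off the telescoping part of $\int_{\bdryT}(\bmpiTzr{k}\bmv\cdot\norT)\,\bmpiFTzr{k}\bmw\cdot\bmzT$ one is left with terms such as $\int_{\bdryT}((\bmpiTzr{k}\bmv-\bmv)\cdot\norT)\,\bmpiFTzr{k}\bmw\cdot\bmzT$, whose factors scale as $h_T^{-1/4}h_T^{k+1}$, $h_T^{-1/4}$ and $h_T^{-1/2}$ in $\LP{4}(\bdryT)\times\LP{4}(\bdryT)\times\LP{2}(\bdryT)$ --- net $O(h_T^{k})$, one power short. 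So the parenthetical claim that the residual family is controlled by $h^{k+1}(\cdots)\norm[\Omega]{\bmzh}$ fails term by term; the $O(h^k)$ pieces disappear only through exact cancellations which, once carried out (recombining the divergence residual with its boundary companion via the divergence theorem), reconstruct the half-skew-symmetrised form you started from.

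The paper's proof never leaves that form: it applies \eqref{eq:convective.ibp} elementwise to write $\int_\Omega(\bmv\cdot\nabla)\bmw\cdot\bmzh$ as $\tfrac12\sum_T[\cdots]$ with precisely the structure of \eqref{eq:tT.def}, keeps the first discrete term $\int_T(\bmpiTzr{k}\bmv\cdot\GT{2k})\bmITk\bmw\cdot\bmzT$ \emph{unexpanded} so that it pairs with $\int_T(\bmv\cdot\nabla)\bmw\cdot\bmzT$ through the purely volumetric consistency of $\GT{2k}$ (no trace inequality, hence no half-power loss against $\norm[\LP{4}(\Omega)^3]{\bmzh}$), and expands only the second term. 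Every surviving boundary integral then carries \emph{both} a jump $\bmzFT-\bmzT$ (worth $h_T^{1/2}\norm[1,T]{\ulbmzT}$ in $\LP{2}(\bdryT)$) and a projection error $\bmv-\bmpiTzr{k}\bmv$ or $\bmw-\bmpiTzr{k}\bmw$ (worth $h_T^{-1/4}h_T^{k+1}$ in $\LP{4}(\bdryT)$ via \eqref{eq:continuous.trace}), which is what delivers $h_T^{k+1}$ locally. To repair your argument you would need to abandon the one-sided recasting and adopt this pairing.
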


\begin{proof}	
	The skew-symmetry \ref{item:th.property.skew.sym} is trivial from	the definitions \eqref{eq:th.def} of $\th$ and \eqref{eq:tT.def} of $\tT$.
	The proof of \ref{item:th.property.bound} is done as in \cite[Proposition 9.17]{di-pietro.droniou:2020:hybrid} (see also Lemma 9.15 therein), using generalised H\"older inequalities with exponents $(4,2,4)$, the boundedness \cite[Eq.~(9.37)]{di-pietro.droniou:2020:hybrid} of $\GT{2k}$ (which is easily checked to hold with the modified discrete norm \eqref{eq:local.norm.def}), and the Poincar\'e--Sobolev inequality \eqref{eq:discrete.poincare.sobolev}.

We now turn to \ref{item:th.property.cons}.	
	It follows the integration by parts formula \eqref{eq:convective.ibp} with $\Omega$ replaced by a generic $T$ that
	\begin{equation*}
		\int_{\Omega}(\bmv\cdot\nabla)\bmw\cdot\bmzh = \frac12 \sum_{T\in\Th}\SqBrac{\int_T (\bmv \cdot \nabla) \bmw \cdot \bmzT - \int_T (\bmv \cdot \nabla) \bmzT \cdot \bmw + \int_{\bdryT}(\bmv\cdot\norT)(\bmzT - \bmzFT)\cdot\bmw},
	\end{equation*}
	where we justify including the term
	\[
		\sum_{T\in\Th}\int_{\bdryT}(\bmv\cdot\norT)\bmzFT\cdot\bmw = \sum_{F\in\Fh}\sum_{T\in\Th[F]}\int_{F}(\bmv\cdot\norTF)\bmzF\cdot\bmw = 0
	\]
	by the single valuedness of $\bmv$ and $\bmw$ on each interior face, and $\bmv\cdot\norTF=0$ for all $F\subset\partial\Omega$. The discrete trilinear form $\th$ may be expanded using equation \eqref{eq:GT.expanded} as
	\begin{align*}
		\th(\bmIhk\bmv, \bmIhk\bmw, \ulbmzh) \eq \frac12 \sum_{T\in\Th}\Big[\int_T\Brac{(\bmpiTzr{k}\bmv\cdot \GT{2k})\bmITk \bmw \cdot \bmzT - (\bmpiTzr{k}\bmv \cdot \nabla)\bmzT\cdot\bmpiTzr{k}\bmw} \nl
		\minus \int_{\bdryT}(\bmpiTzr{k}\bmv \cdot \norT)(\bmzFT - \bmzT)\cdot\bmpiTzr{k}\bmw\Big].
	\end{align*}
	Therefore
	\begin{align*}
		\int_{\Omega}&(\bmv\cdot\nabla)\bmw\cdot\bmzh - \th(\bmIhk\bmv, \bmIhk\bmw, \ulbmzh) \nl
		\eq \frac12 \sum_{T\in\Th}\int_T \SqBrac{(\bmv \cdot \nabla)\bmw\cdot \bmzT - (\bmpiTzr{k}\bmv\cdot \GT{2k})\bmITk \bmw \cdot \bmzT} \nl
		\minus \frac12 \sum_{T\in\Th}\int_T \SqBrac{((\bmv- \bmpiTzr{k}\bmv) \cdot \nabla) \bmzT \cdot \bmw + (\bmpiTzr{k}\bmv \cdot \nabla)\bmzT\cdot(\bmw-\bmpiTzr{k}\bmw)} \nl
		\minus \frac12 \sum_{T\in\Th}\int_{\bdryT}\SqBrac{((\bmv- \bmpiTzr{k}\bmv)\cdot\norT) (\bmzFT - \bmzT) \cdot \bmw + (\bmpiTzr{k}\bmv\cdot\norT) (\bmzFT - \bmzT) \cdot (\bmw- \bmpiTzr{k}\bmw)}.
	\end{align*}
	The conclusion follows as in \cite[Proposition 9.17]{di-pietro.droniou:2020:hybrid}, using the consistency of $\GT{2k}$ (Lemma 9.15 in this reference), the Poincar\'e--Sobolev inequality \eqref{eq:discrete.poincare.sobolev}, the continuous trace inequality \eqref{eq:continuous.trace}, and the approximation properties of the $L^2$-projectors \cite[Theorem 1.45]{di-pietro.droniou:2020:hybrid}.
	\end{proof}

\subsection{Consistency errors}

For all $(\bmv, \bmw, p, s)\in\bmU\times\bmB\times P\times P$, the kinetic consistency error $\calE_{k, h}((\bmv, \bmw, p, s); \cdot):\bmUhkzr\to\R$ and magnetic consistency error $\calE_{m, h}((\bmv, \bmw, p, s); \cdot):\bmUhkn\to\R$ are defined via
\begin{equation}\label{eq:kinetic.cons.def}
	\calE_{k, h}((\bmv, \bmw, p, s); \ulbmzh) = \nu_k\calE_{\rma,h}(\bmv;\ulbmzh) + \calE_{\rmt,h}((\bmv,\bmv);\ulbmzh) - \calE_{\rmt,h}((\bmw,\bmw);\ulbmzh) + \calE_{\rmd,h}(p;\ulbmzh)
\end{equation}
and
\begin{equation}\label{eq:magnetic.cons.def}
\calE_{m, h}((\bmv, \bmw, p, s); \ulbmzh) = \nu_m\calE_{\rma,h}(\bmw;\ulbmzh) + \calE_{\rmt,h}((\bmv,\bmw);\ulbmzh) - \calE_{\rmt,h}((\bmw,\bmv);\ulbmzh) + \calE_{\rmd,h}(s;\ulbmzh)
\end{equation}
where the linear forms $\calE_{\rma,h}(\bmv;\cdot):\bmUhk\to\R$, $\calE_{\rmd,h}(s;\cdot):\bmUhk\to\R$ and $\calE_{\rmt,h}((\bmv,\bmw);\cdot):\bmUhk\to\R$ are defined as
\[
	\calE_{\rma,h}(\bmv;\ulbmzh) \defeq -\int_{\Omega} \Delta \bmv \cdot \bmzh - \ah(\bmIhk \bmv, \ulbmzh),
\]

\[
	\calE_{\rmd,h}(s;\ulbmzh) \defeq \int_\Omega \bmzh \cdot \nabla s - \dh(\ulbmzh, \pihzr{k}s),
\]
and
\[
	\calE_{\rmt,h}((\bmv,\bmw);\ulbmzh) \defeq \int_\Omega (\bmv\cdot\nabla)\bmw\cdot\bmzh - \th(\bmIhk \bmv, \bmIhk \bmw, \ulbmzh).
\]

\begin{theorem}[Consistency error]
	Suppose that $(\bmv, \bmw, p, s)\in\bmU\times\bmB\times P\times P$ satisfy the additional regularity: $\bmv, \bmw \in \HS{k+2}(\Th)^3\cap\WSP{1,4}(\Omega)^3$, $\Delta\bmv, \Delta\bmw \in \LTWO(\Omega)^3$, $\nabla\cdot\bmv=\nabla\cdot\bmw=0$, $\bmn_{\Omega}\times(\nabla\times\bmw)=\bm{0}$ on $\bdry\Omega$, and $p, s\in\HONE(\Omega)\cap\HS{k+1}(\Th)$. Then the kinetic consistency error $\calE_{k, h}((\bmv, \bmw, p, s); \cdot)$ and magnetic consistency error $\calE_{m, h}((\bmv, \bmw, p, s); \cdot)$ satisfy the following estimates:
	\begin{multline}\label{eq:kinetic.cons.error}
		\sup_{\ulbmzh \in \bmUhkzr \backslash \{\ulbm{0}\}}\frac{|\calE_{k, h}((\bmv, \bmw, p, s); \ulbmzh)|}{\norm[1, h]{\ulbmzh}}
		\lesssim h^{k+1}\Big[\nu_k\seminorm[\HS{k+2}(\Th)^3]{\bmv} + \norm[\WSP{1, 4}(\Omega)^3]{\bmv}\seminorm[\WSP{k+1, 4}(\Th)^3]{\bmv} \\ + \norm[\WSP{1, 4}(\Omega)^3]{\bmw}\seminorm[\WSP{k+1, 4}(\Th)^3]{\bmw} + \seminorm[\HS{k+1}(\Th)]{p}\Big],
	\end{multline}
	and
	\begin{multline}\label{eq:magnetic.cons.error}
		\sup_{\ulbmzh \in \bmUhkn \backslash \{\ulbm{0}\}}\frac{|\calE_{m, h}((\bmv, \bmw, p, s); \ulbmzh)|}{\norm[1, h]{\ulbmzh}}
		\lesssim h^{k+1}\Big[\nu_m\seminorm[\HS{k+2}(\Th)^3]{\bmw} + \norm[\WSP{1, 4}(\Omega)^3]{\bmv}\seminorm[\WSP{k+1, 4}(\Th)^3]{\bmw} \\ + \norm[\WSP{1, 4}(\Omega)^3]{\bmw}\seminorm[\WSP{k+1, 4}(\Th)^3]{\bmv} + \seminorm[\HS{k+1}(\Th)]{s}\Big].
	\end{multline}
\end{theorem}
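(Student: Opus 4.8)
The plan is to derive both estimates by decomposing the two consistency errors, along their definitions \eqref{eq:kinetic.cons.def} and \eqref{eq:magnetic.cons.def}, into sums of the four elementary consistency errors -- one occurrence of $\calE_{\rma,h}$, two of $\calE_{\rmt,h}$, and one of $\calE_{\rmd,h}$ -- and then bounding each of these using the consistency properties already established in Propositions~\ref{prop:ah.properties}, \ref{prop:th.properties} and \ref{prop:dh.properties}. Concretely, for \eqref{eq:kinetic.cons.error} I fix $\ulbmzh\in\bmUhkzr\backslash\{\ulbm{0}\}$ and use the triangle inequality to write $|\calE_{k,h}((\bmv,\bmw,p,s);\ulbmzh)|\le\nu_k|\calE_{\rma,h}(\bmv;\ulbmzh)|+|\calE_{\rmt,h}((\bmv,\bmv);\ulbmzh)|+|\calE_{\rmt,h}((\bmw,\bmw);\ulbmzh)|+|\calE_{\rmd,h}(p;\ulbmzh)|$; symmetrically, for \eqref{eq:magnetic.cons.error} I fix $\ulbmzh\in\bmUhkn\backslash\{\ulbm{0}\}$ and split $\calE_{m,h}$ into the four terms $\nu_m\calE_{\rma,h}(\bmw;\ulbmzh)$, $\calE_{\rmt,h}((\bmv,\bmw);\ulbmzh)$, $\calE_{\rmt,h}((\bmw,\bmv);\ulbmzh)$, $\calE_{\rmd,h}(s;\ulbmzh)$. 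After estimating each term I divide by $\norm[1,h]{\ulbmzh}$ and take the supremum.

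The substance of the argument is checking that the standing regularity assumptions feed exactly into the hypotheses of these term-wise estimates. For the diffusive term of the magnetic error I invoke Proposition~\ref{prop:ah.properties}\ref{item:ah.property.cons} directly on $\bmw$: its hypotheses $\bmw\in\bmB\cap\HS{k+2}(\Th)^3$, $\Delta\bmw\in\LTWO(\Omega)^3$, $\bmn_\Omega\times(\nabla\times\bmw)=\bm{0}$ on $\partial\Omega$, and $\ulbmzh\in\bmUhkn$ are precisely the standing assumptions, which gives a bound $\lesssim\norm[1,h]{\ulbmzh}\,h^{k+1}\seminorm[\HS{k+2}(\Th)^3]{\bmw}$. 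For the diffusive term of the kinetic error, $\bmv\in\bmU=\HONEzr(\Omega)^3$ need not satisfy the tangential-curl boundary condition appearing in \ref{item:ah.property.cons}; however, since the test function lies in $\bmUhkzr$ its face polynomials on $\partial\Omega$ vanish, so the boundary term over $\partial\Omega$ that is dealt with via the polyhedral cancellation in the proof of Proposition~\ref{prop:ah.properties}\ref{item:ah.property.cons} is now trivially zero, and that proof carries over verbatim (it is just the homogeneous-Dirichlet analogue of \ref{item:ah.property.cons}), yielding $\lesssim\norm[1,h]{\ulbmzh}\,h^{k+1}\seminorm[\HS{k+2}(\Th)^3]{\bmv}$. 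The four convective terms are controlled by Proposition~\ref{prop:th.properties}\ref{item:th.property.cons} applied to the pairs $(\bmv,\bmv)$ and $(\bmw,\bmw)$ in the kinetic case, and $(\bmv,\bmw)$ and $(\bmw,\bmv)$ in the magnetic case: the first-argument hypotheses $\nabla\cdot\bmv=0$, $\bmv\cdot\bmn_\Omega=0$ (resp. $\nabla\cdot\bmw=0$, $\bmw\cdot\bmn_\Omega=0$) hold because $\bmv\in\bmU$, $\bmw\in\bmB$, and by assumption, while the regularity $\bmv,\bmw\in\WSP{1,4}(\Omega)^3\cap\WSP{k+1,4}(\Th)^3$ needed by \ref{item:th.property.cons} follows from the standing assumption $\bmv,\bmw\in\WSP{1,4}(\Omega)^3\cap\HS{k+2}(\Th)^3$ via the dimension-three elementwise embedding $\HS{j}(T)\hookrightarrow\LP{4}(T)$, $j\ge1$. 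Each convective term is then bounded by $\norm[1,h]{\ulbmzh}\,h^{k+1}$ times the corresponding product of a $\WSP{1,4}(\Omega)^3$-norm and a $\WSP{k+1,4}(\Th)^3$-seminorm. Finally, the pressure terms are bounded by Proposition~\ref{prop:dh.properties}\ref{item:dh.property.cons} (valid on $\bmUhkn\supset\bmUhkzr$), contributing $\lesssim\norm[1,h]{\ulbmzh}\,h^{k+1}\seminorm[\HS{k+1}(\Th)]{p}$ and $\lesssim\norm[1,h]{\ulbmzh}\,h^{k+1}\seminorm[\HS{k+1}(\Th)]{s}$.

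Adding these four bounds, dividing by $\norm[1,h]{\ulbmzh}$ and taking the supremum over admissible test functions reproduces the right-hand sides of \eqref{eq:kinetic.cons.error} and \eqref{eq:magnetic.cons.error} verbatim once like terms are collected (the single product $\norm[\WSP{1,4}(\Omega)^3]{\bmv}\seminorm[\WSP{k+1,4}(\Th)^3]{\bmv}$ absorbing the factor arising from the pair $(\bmv,\bmv)$, and similarly for $(\bmw,\bmw)$). I do not anticipate a genuine obstacle here: the statement is essentially an assembly of the three propositions of the previous subsection, and the only point that calls for a word of care is the mismatch, in the velocity diffusive term, between the Neumann-type consistency statement \ref{item:ah.property.cons} and the homogeneous-Dirichlet setting of $\bmv$ with $\ulbmzh\in\bmUhkzr$ -- a mismatch which, as observed, only makes the estimate easier rather than harder.
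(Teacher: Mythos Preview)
Your proposal is correct and follows essentially the same approach as the paper: a triangle inequality on the definitions \eqref{eq:kinetic.cons.def}--\eqref{eq:magnetic.cons.def} followed by term-wise application of \eqref{eq:ah.consistency}, \eqref{eq:dh.consistency} and \eqref{eq:th.consistency}, with the Dirichlet variant of \ref{item:ah.property.cons} invoked for the velocity diffusive term. Your remarks on the Dirichlet case and on recovering the $\WSP{k+1,4}(\Th)^3$ regularity from $\HS{k+2}(\Th)^3$ via elementwise Sobolev embedding are spot on and slightly more explicit than the paper's own one-line proof.
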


\begin{proof}
	The proof follows trivially by a triangle inequality on the terms $|\calE_{k, h}((\bmv, \bmw, p, s); \ulbmzh)|$ and $|\calE_{m, h}((\bmv, \bmw, p, s); \ulbmzh)|$, and applying the estimates \eqref{eq:ah.consistency} (and \cite[Eq.~(8.32)]{di-pietro.droniou:2020:hybrid} for the Dirichlet boundary conditions on $\bmv$), \eqref{eq:dh.consistency} and \eqref{eq:th.consistency}.
\end{proof}

\subsection{Stability of the scheme}

The existence of a solution to the scheme and its uniqueness and error estimates for small data all follow from a general stability result that we now establish. To simplify the presentation and proof of this result, we start by re-casting the variational formulation \eqref{eq:discrete} in a more compact way: find $\underline{x}_h\in\bmX_h^k$ such that
\begin{equation}\label{eq:var.glob}
\rmA_h(\underline{x}_h,\underline{y}_h)+\rmT_h(\underline{x}_h,\underline{x}_h,\underline{y}_h)=\rmF_h(\underline{y}_h)\quad\forall\underline{y}_h\in\bmX_h^k,
\end{equation}
where we have defined the linear form $\rmF_h:\bmX_h^k\to\R$, bilinear form $\rmA_h:\bmX_h^k\times\bmX_h^k\to \R$ and trilinear form $\rmT_h:\bmX_h^k\times\bmX_h^k\times \bmX_h^k\to \R$ by,
for $\underline{x}^i_h=(\ulbmuh^i, \ulbmbh^i, q^i_h, r^i_h)\in\bmX_h^k$ (with $i=\sharp,\flat$) and $\underline{y}_h=(\ulbmvh,\ulbmwh,s_h,z_h)\in\bmX_h^k$,
\[
\rmF_h(\underline{y}_h)=(\bmf,\bmv_h)_\Omega+(\bmg,\bmw_h)_\Omega,
\]
\[
\rmA_h(\underline{x}^\sharp_h,\underline{y}_h)=\nu_k\rma_h(\ulbmuh^\sharp,\ulbmvh)+\nu_m\rma_h(\ulbmbh^\sharp,\ulbmwh)+\rmd_h(\ulbmvh,q_h^\sharp)
+\rmd_h(\ulbmwh,r_h^\sharp)-\rmd_h(\ulbmuh^\sharp,s_h)-\rmd_h(\ulbmbh^\sharp,z_h)
\]
and
\[
\rmT_h(\underline{x}^\sharp_h,\underline{x}^\flat_h,\underline{y}_h)=\rmt_h(\ulbmuh^\sharp,\ulbmuh^\flat,\ulbmvh)-\rmt_h(\ulbmbh^\sharp,\ulbmbh^\flat,\ulbmvh)
+\rmt_h(\ulbmuh^\sharp,\ulbmbh^\flat,\ulbmwh)-\rmt_h(\ulbmbh^\sharp,\ulbmuh^\flat,\ulbmwh).
\]

Simple algebra shows that
\begin{equation}\label{eq:coer.rmAh}
\rmA_h(\underline{x}^\sharp_h,\underline{x}^\sharp_h)=\nu_k\rma_h(\ulbmuh^\sharp,\ulbmuh^\sharp)+\nu_m\rma_h(\ulbmbh^\sharp,\ulbmbh^\sharp)=\norm[\rma,\rma,h]{(\ulbmuh^\sharp,\ulbmbh^\sharp)}^2,
\end{equation}
where $\norm[\a,\a,h]{(\cdot,\cdot)}$ is defined by \eqref{eq:aah.norm.def} and, using the skew-symmetry \eqref{eq:th.skew.sym} of $\rmt_h$, that
\begin{equation}\label{eq:skew.sym.rmTh}
\rmT_h(\underline{x}_h,\underline{y}_h,\underline{y}_h)=0.
\end{equation}

We define the norm $\norm[1,1,h]{{\cdot}}$ on $\bmUhkzr\times\bmUhkn$ by
\[
\norm[1,1,h]{(\ulbmvh,\ulbmwh)}=(\norm[1,h]{\ulbmvh}^2+\norm[1,h]{\ulbmwh}^2)^{\frac12}\qquad\forall (\ulbmvh,\ulbmwh)\in \bmUhkzr\times\bmUhkn
\]
and, for a linear mapping $\rmG:\bmX_h^k\to\R$ depending only on the first two components (such as $\rmF_h$ above), the dual norm by
\[
\norm[\bmX,\star]{\rmG}\defeq \sup\left\{\frac{\rmG((\ulbmvh,\ulbmwh,0,0))}{\norm[1,1,h]{(\ulbmvh,\ulbmwh)}}\,:\,
(\ulbmvh,\ulbmwh)\in\bmUhkzr\times\bmUhkn\backslash\{(\ul{0},\ul{0})\}
\right\}.
\]

\begin{lemma}[Boundedness of $\rmT_h$]
It holds, for all $\underline{x}_h^\sharp,\underline{x}_h^\flat,\underline{y}_h\in\bmX_h^k$,
\begin{equation}\label{eq:bound.rmTh}
|\rmT_h(\underline{x}_h^\sharp,\underline{x}_h^\flat,\underline{y}_h)|\le \sqrt{2}C_{\rmt}\norm[1,1,h]{(\ulbmuh^\sharp,\ulbmbh^\sharp)}\norm[1,1,h]{(\ulbmuh^\flat,\ulbmbh^\flat)}\norm[1,1,h]{(\ulbmvh,\ulbmwh)}.
\end{equation}
\end{lemma}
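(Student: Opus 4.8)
The plan is to reduce the bound to the elementwise-summed boundedness \eqref{eq:th.boundedness} of $\rmt_h$ and then to dispatch the four-term combination defining $\rmT_h$ by elementary Cauchy--Schwarz manipulations on the six scalar seminorms involved. To lighten notation, set $a=\norm[1,h]{\ulbmuh^\sharp}$, $b=\norm[1,h]{\ulbmbh^\sharp}$, $c=\norm[1,h]{\ulbmuh^\flat}$, $d=\norm[1,h]{\ulbmbh^\flat}$, $e=\norm[1,h]{\ulbmvh}$, $f=\norm[1,h]{\ulbmwh}$, so that $\norm[1,1,h]{(\ulbmuh^\sharp,\ulbmbh^\sharp)}=(a^2+b^2)^{1/2}$, $\norm[1,1,h]{(\ulbmuh^\flat,\ulbmbh^\flat)}=(c^2+d^2)^{1/2}$ and $\norm[1,1,h]{(\ulbmvh,\ulbmwh)}=(e^2+f^2)^{1/2}$.

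The first step is a triangle inequality on the definition of $\rmT_h(\underline{x}_h^\sharp,\underline{x}_h^\flat,\underline{y}_h)$ followed by an application of the boundedness property \ref{item:th.property.bound} to each of its four terms; since all arguments appearing in the $\rmt_h$-terms lie in $\bmUhkn$ (recall $\bmUhkzr\subset\bmUhkn$), this is legitimate and yields $|\rmT_h(\underline{x}_h^\sharp,\underline{x}_h^\flat,\underline{y}_h)|\le C_{\rmt}(ace+bde+adf+bcf)=C_{\rmt}\big(e(ac+bd)+f(ad+bc)\big)$. The second step bounds the two inner sums by Cauchy--Schwarz in $\R^2$: $ac+bd\le (a^2+b^2)^{1/2}(c^2+d^2)^{1/2}$ (pairing $(a,b)$ with $(c,d)$) and, crucially, $ad+bc\le (a^2+b^2)^{1/2}(c^2+d^2)^{1/2}$ as well (pairing $(a,b)$ with $(d,c)$). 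Factoring out the common product leaves $C_{\rmt}(e+f)(a^2+b^2)^{1/2}(c^2+d^2)^{1/2}$, and a last use of Cauchy--Schwarz (equivalently, arithmetic--quadratic mean) in the form $e+f\le\sqrt{2}(e^2+f^2)^{1/2}$ produces exactly \eqref{eq:bound.rmTh}.

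There is no genuine obstacle in this argument; the only two points worth flagging are that the ``mixed'' term $ad+bc$ must be controlled by the very same product $(a^2+b^2)^{1/2}(c^2+d^2)^{1/2}$ via the pairing $(a,b)\cdot(d,c)$ rather than $(a,b)\cdot(c,d)$, and that the constant $\sqrt{2}$ in the statement is produced solely by the final estimate $e+f\le\sqrt{2}(e^2+f^2)^{1/2}$ on the test-function norm, with no further loss incurred when combining the $\sharp$ and $\flat$ arguments.
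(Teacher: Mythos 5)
Your proof is correct and follows essentially the same route as the paper's: apply the elementwise boundedness \eqref{eq:th.boundedness} to each of the four $\rmt_h$-terms, group by test-function factor, control both $ac+bd$ and the mixed sum $ad+bc$ by $(a^2+b^2)^{1/2}(c^2+d^2)^{1/2}$ via the two Cauchy--Schwarz pairings, and finish with $e+f\le\sqrt{2}(e^2+f^2)^{1/2}$. Nothing to add.
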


\begin{proof}
Using \eqref{eq:th.boundedness} we have
\begin{align}
C_{\rmt}^{-1}|\rmT_h(\underline{x}_h^\sharp,\underline{x}_h^\flat,\underline{y}_h)|\le{}&
 \norm[1,h]{\ulbmuh^\sharp}\norm[1,h]{\ulbmuh^\flat}\norm[1,h]{\ulbmvh}
+ \norm[1,h]{\ulbmbh^\sharp}\norm[1,h]{\ulbmbh^\flat}\norm[1,h]{\ulbmvh}\nonumber\\
&+ \norm[1,h]{\ulbmuh^\sharp}\norm[1,h]{\ulbmbh^\flat}\norm[1,h]{\ulbmwh}
+ \norm[1,h]{\ulbmbh^\sharp}\norm[1,h]{\ulbmuh^\flat}\norm[1,h]{\ulbmwh}.
\label{eq:bound.rmTh.1}
\end{align}
We then use Cauchy--Schwarz inequalities to write
\begin{align*}
\norm[1,h]{\ulbmuh^\sharp}\norm[1,h]{\ulbmuh^\flat}+\norm[1,h]{\ulbmbh^\sharp}\norm[1,h]{\ulbmbh^\flat}\le{}&\norm[1,1,h]{(\ulbmuh^\sharp,\ulbmbh^\sharp)}\norm[1,1,h]{(\ulbmuh^\flat,\ulbmbh^\flat)},\\
\norm[1,h]{\ulbmuh^\sharp}\norm[1,h]{\ulbmbh^\flat}
+ \norm[1,h]{\ulbmbh^\sharp}\norm[1,h]{\ulbmuh^\flat}\le{}&
\norm[1,1,h]{(\ulbmuh^\sharp,\ulbmbh^\sharp)}\norm[1,1,h]{(\ulbmuh^\flat,\ulbmbh^\flat)}.
\end{align*}
Plugged into \eqref{eq:bound.rmTh.1}, this gives
\[
C_{\rmt}^{-1}|\rmT_h(\underline{x}_h^\sharp,\underline{x}_h^\flat,\underline{y}_h)|\le
\norm[1,1,h]{(\ulbmuh^\sharp,\ulbmbh^\sharp)}\norm[1,1,h]{(\ulbmuh^\flat,\ulbmbh^\flat)}\left(\norm[1,h]{\ulbmvh}+\norm[1,h]{\ulbmwh}\right)
\]
and the Cauchy--Schwarz inequality $\norm[1,h]{\ulbmvh}+\norm[1,h]{\ulbmwh}\le\sqrt{2}\norm[1,1,h]{(\ulbmvh,\ulbmwh)}$ concludes the proof.
\end{proof}

Both the uniqueness of the discrete solution and the error estimate will be consequences of the following stability result. 

\begin{lemma}[Stability of the scheme]\label{lem:stab.variational}
Let $\underline{x}_h^\sharp$ and $\underline{x}_h^\flat$ be two solutions of \eqref{eq:var.glob} corresponding to two right-hand sides $\rmF_h^\sharp$ and $\rmF_h^\flat$. Assume that, for some $\chi\in [0,1)$,
\begin{equation}\label{eq:var.glob.small1}
\frac{\sqrt{2}C_{\rmt}C_{\rma}}{\min(\nu_k,\nu_m)}\norm[1,1,h]{(\ulbmuh^\sharp,\ulbmbh^\sharp)}\le \chi.
\end{equation}
Then,
\begin{equation}\label{eq:var.glob.est1}
(1-\chi)\norm[\rma,\rma,h]{(\ulbmuh^\sharp-\ulbmuh^\flat,\ulbmbh^\sharp-\ulbmbh^\flat)}\le \frac{C_{\rma}^\frac12}{\min(\nu_k,\nu_m)^\frac12}\norm[\bmX,\star]{\rmF_h^\sharp-\rmF_h^\flat},
\end{equation}
and, for some $C_0$ depending only on $\nu_k$, $\nu_m$, $C_{\rmt}$, $C_{\rma}$ and $C_{\rmd}$,
\begin{multline}
\label{eq:var.glob.est.pressure}
(1-\chi)\Big(\norm[L^2(\Omega)]{q_h^\sharp-q_h^\flat}+\norm[L^2(\Omega)]{r_h^\sharp-r_h^\flat}\Big)\\
\le C_0\norm[\bmX,\star]{\rmF_h^\sharp-\rmF_h^\flat}\left(1+\norm[1,1,h]{(\ulbmuh^\sharp,\ulbmbh^\sharp)}+\norm[1,1,h]{(\ulbmuh^\flat,\ulbmbh^\flat)}\right).
\end{multline}
\end{lemma}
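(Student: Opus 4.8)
The plan is to test the two equations \eqref{eq:var.glob} for $\underline{x}_h^\sharp$ and $\underline{x}_h^\flat$ against a well-chosen element of $\bmX_h^k$ and exploit the coercivity \eqref{eq:coer.rmAh} and the trilinear-form identity \eqref{eq:skew.sym.rmTh}. Write $\underline{e}_h\defeq\underline{x}_h^\sharp-\underline{x}_h^\flat=(\ulbmuh^\sharp-\ulbmuh^\flat,\ulbmbh^\sharp-\ulbmbh^\flat,q_h^\sharp-q_h^\flat,r_h^\sharp-r_h^\flat)$. Subtracting the two instances of \eqref{eq:var.glob} gives
\[
\rmA_h(\underline{e}_h,\underline{y}_h)+\rmT_h(\underline{x}_h^\sharp,\underline{x}_h^\sharp,\underline{y}_h)-\rmT_h(\underline{x}_h^\flat,\underline{x}_h^\flat,\underline{y}_h)=(\rmF_h^\sharp-\rmF_h^\flat)(\underline{y}_h)\qquad\forall\underline{y}_h\in\bmX_h^k.
\]
For the trilinear terms I would use the algebraic splitting $\rmT_h(\underline{x}^\sharp,\underline{x}^\sharp,\cdot)-\rmT_h(\underline{x}^\flat,\underline{x}^\flat,\cdot)=\rmT_h(\underline{e}_h,\underline{x}_h^\sharp,\cdot)+\rmT_h(\underline{x}_h^\flat,\underline{e}_h,\cdot)$ (valid since $\rmT_h$ is bilinear in its first two arguments via the bilinearity of $\rmt_h$). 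Taking $\underline{y}_h=\underline{e}_h$ and using \eqref{eq:skew.sym.rmTh} to kill the term $\rmT_h(\underline{x}_h^\flat,\underline{e}_h,\underline{e}_h)=0$, together with \eqref{eq:coer.rmAh}, yields
\[
\norm[\rma,\rma,h]{(\ulbmuh^\sharp-\ulbmuh^\flat,\ulbmbh^\sharp-\ulbmbh^\flat)}^2
=-\rmT_h(\underline{e}_h,\underline{x}_h^\sharp,\underline{e}_h)+(\rmF_h^\sharp-\rmF_h^\flat)(\underline{e}_h).
\]

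Next I would bound the right-hand side. By \eqref{eq:bound.rmTh}, $|\rmT_h(\underline{e}_h,\underline{x}_h^\sharp,\underline{e}_h)|\le\sqrt2 C_{\rmt}\norm[1,1,h]{(\ulbmuh^\sharp,\ulbmbh^\sharp)}\norm[1,1,h]{(\ulbmuh^\sharp-\ulbmuh^\flat,\ulbmbh^\sharp-\ulbmbh^\flat)}^2$, and since $\rmF_h^\sharp-\rmF_h^\flat$ depends only on the first two components, $|(\rmF_h^\sharp-\rmF_h^\flat)(\underline{e}_h)|\le\norm[\bmX,\star]{\rmF_h^\sharp-\rmF_h^\flat}\norm[1,1,h]{(\ulbmuh^\sharp-\ulbmuh^\flat,\ulbmbh^\sharp-\ulbmbh^\flat)}$. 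The energy norm controls the $\norm[1,1,h]{\cdot}$-norm from below: by \eqref{eq:ah.stab.and.bound}, $\min(\nu_k,\nu_m)C_{\rma}^{-1}\norm[1,1,h]{(\ulbmvh,\ulbmwh)}^2\le\norm[\rma,\rma,h]{(\ulbmvh,\ulbmwh)}^2$. Substituting these bounds, dividing through by $\norm[\rma,\rma,h]{(\ulbmuh^\sharp-\ulbmuh^\flat,\ulbmbh^\sharp-\ulbmbh^\flat)}$, and using the smallness hypothesis \eqref{eq:var.glob.small1} — which is precisely the statement that $\sqrt2 C_{\rmt}\norm[1,1,h]{(\ulbmuh^\sharp,\ulbmbh^\sharp)}$ times the norm-equivalence factor $C_{\rma}/\min(\nu_k,\nu_m)$ is $\le\chi$ — to absorb the trilinear contribution into the left-hand side, gives \eqref{eq:var.glob.est1} with the constant $C_{\rma}^{1/2}/\min(\nu_k,\nu_m)^{1/2}$.

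For the pressure estimate \eqref{eq:var.glob.est.pressure} I would test the subtracted equation against $\underline{y}_h=(\ulbmvh,\ulbmwh,0,0)$ with $(\ulbmvh,\ulbmwh)\in\bmUhkzr\times\bmUhkn$ arbitrary. The $\rmd_h$-terms involving $s_h,z_h$ drop out, leaving
\[
\rmd_h(\ulbmvh,q_h^\sharp-q_h^\flat)+\rmd_h(\ulbmwh,r_h^\sharp-r_h^\flat)
=-\nu_k\rma_h(\ulbmuh^\sharp-\ulbmuh^\flat,\ulbmvh)-\nu_m\rma_h(\ulbmbh^\sharp-\ulbmbh^\flat,\ulbmwh)-\rmT_h(\underline{e}_h,\underline{x}_h^\sharp,\underline{y}_h)-\rmT_h(\underline{x}_h^\flat,\underline{e}_h,\underline{y}_h)+(\rmF_h^\sharp-\rmF_h^\flat)(\underline{y}_h).
\]
Each term on the right is bounded, via \eqref{eq:ah.stab.and.bound}, \eqref{eq:bound.rmTh}, \eqref{eq:var.glob.est1} (to control $\norm[1,1,h]{(\ulbmuh^\sharp-\ulbmuh^\flat,\ulbmbh^\sharp-\ulbmbh^\flat)}$ by $(1-\chi)^{-1}\norm[\bmX,\star]{\rmF_h^\sharp-\rmF_h^\flat}$ up to constants), and the definition of $\norm[\bmX,\star]{\cdot}$, by a constant times $\norm[1,1,h]{(\ulbmvh,\ulbmwh)}$ times $\norm[\bmX,\star]{\rmF_h^\sharp-\rmF_h^\flat}(1+\norm[1,1,h]{(\ulbmuh^\sharp,\ulbmbh^\sharp)}+\norm[1,1,h]{(\ulbmuh^\flat,\ulbmbh^\flat)})$. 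Choosing $\ulbmvh,\ulbmwh$ to (nearly) realise the inf-sup suprema for $q_h^\sharp-q_h^\flat$ and $r_h^\sharp-r_h^\flat$ separately in \eqref{eq:dh.boundedness}, and noting that $\bmUhkzr\subset\bmUhkn$ so the inf-sup constant $C_{\rmd}$ applies in both components, delivers $C_{\rmd}^{-1}(\norm[L^2(\Omega)]{q_h^\sharp-q_h^\flat}+\norm[L^2(\Omega)]{r_h^\sharp-r_h^\flat})$ bounded by the same quantity, which is \eqref{eq:var.glob.est.pressure} after absorbing $(1-\chi)$. The only mildly delicate point is the bookkeeping in this last step — one must handle $q$ and $r$ with two separate test pairs and make sure the $\rmT_h$ bounds are expressed through $\norm[1,1,h]{(\ulbmuh^\sharp,\ulbmbh^\sharp)}$ and $\norm[1,1,h]{(\ulbmuh^\flat,\ulbmbh^\flat)}$ rather than through $\chi$, so that the final constant $C_0$ depends only on the listed parameters; everything else is a routine chain of Cauchy--Schwarz and triangle inequalities.
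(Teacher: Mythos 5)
Your proposal is correct and follows essentially the same route as the paper: the same error equation with the splitting $\rmT_h(\underline{x}^\sharp,\underline{x}^\sharp,\cdot)-\rmT_h(\underline{x}^\flat,\underline{x}^\flat,\cdot)=\rmT_h(\underline{e}_h,\underline{x}^\sharp_h,\cdot)+\rmT_h(\underline{x}^\flat_h,\underline{e}_h,\cdot)$, the same test function $\underline{e}_h$ combined with \eqref{eq:coer.rmAh}, \eqref{eq:skew.sym.rmTh}, \eqref{eq:bound.rmTh} and the norm equivalence \eqref{eq:ah.stab.and.bound} to absorb the trilinear term under \eqref{eq:var.glob.small1}, and the same recovery of the pressure estimate by testing with $(\ulbmvh,\ulbmwh,0,0)$ and invoking the inf--sup property \eqref{eq:dh.boundedness}. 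The bookkeeping you flag at the end (separate test pairs for $q$ and $r$, and expressing the $\rmT_h$ bounds through the $\norm[1,1,h]{\cdot}$ norms of the two solutions) is exactly how the paper's proof proceeds.
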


\begin{proof}
The vector $\underline{\omega}_h=\underline{x}_h^\sharp-\underline{x}_h^\flat=(\ul{\bme}_u,\ul{\bme}_b,e_q,e_r)\in\bmX_h^k$ satisfies the following error equation: for all $\underline{y}_h\in\bmX_h^k$,
\begin{equation}\label{eq:error.equation}
\rmA_h(\underline{\omega}_h,\underline{y}_h)+\rmT_h(\underline{\omega}_h,\underline{x}_h^\sharp,\underline{y}_h)+\rmT_h(\underline{x}^\flat_h,\underline{\omega}_h,\underline{y}_h)=(\rmF_h^\sharp-\rmF_h^\flat)(\underline{y}_h).
\end{equation}
Making $\underline{y}_h=\underline{\omega}_h$ and using \eqref{eq:coer.rmAh}, \eqref{eq:skew.sym.rmTh}, \eqref{eq:bound.rmTh} and the definition of $\norm[\bmX,\star]{{\cdot}}$, we infer
\[
\norm[\rma,\rma,h]{(\ul{\bme}_u,\ul{\bme}_b)}^2\le \norm[\bmX,\star]{\rmF_h^\sharp-\rmF_h^\flat}\norm[1,1,h]{(\ul{\bme}_u,\ul{\bme}_b)}+
\sqrt{2}C_{\rmt}\norm[1,1,h]{(\ulbmuh^\sharp,\ulbmbh^\sharp)}\norm[1,1,h]{(\ul{\bme}_u,\ul{\bme}_b)}^2.
\]
The estimate \eqref{eq:var.glob.est1} follows from \eqref{eq:var.glob.small1} by using \eqref{eq:ah.stab.and.bound} to write 
\begin{equation}\label{eq:bound.11.aa}
\norm[1,1,h]{{\cdot}}^2\le \frac{C_{\rma}}{\min(\nu_k,\nu_m)}\norm[\rma,\rma,h]{{\cdot}}^2.
\end{equation}
To prove \eqref{eq:var.glob.est.pressure}, we develop $\rmA_h$ in the error equation \eqref{eq:error.equation}, with $\underline{y}_h=(\ulbmvh,\ulbmwh,0,0)$, to see that
\begin{multline}\label{eq:error.pressures.1}
\nu_k\rma_h(\ul{\bme}_u,\ulbmvh)+\nu_m\rma_h(\ul{\bme}_b,\ulbmwh)+\rmd_h(\ulbmvh,e_q)
+\rmd_h(\ulbmwh,e_r)\\
+\rmT_h(\underline{\omega}_h,\underline{x}_h^\sharp,\underline{y}_h)+\rmT_h(\underline{x}_h^\flat,\underline{\omega}_h,\underline{y}_h)=(\rmF_h^\sharp-\rmF_h^\flat)(\underline{y}_h).
\end{multline}
Using Cauchy--Schwarz inequalities, the estimate \eqref{eq:var.glob.est1} and the norm equivalence \eqref{eq:ah.stab.and.bound}, we have
\begin{equation}\label{eq:error.pressure.2}
(1-\chi)\left|\nu_k\rma_h(\ul{\bme}_u,\ulbmvh)+\nu_m\rma_h(\ul{\bme}_b,\ulbmwh)\right|\\
\le C_1\norm[\bmX,\star]{\rmF_h^\sharp-\rmF_h^\flat}\norm[1,1,h]{(\ulbmvh,\ulbmwh)}
\end{equation}
and, invoking the boundedness \eqref{eq:bound.rmTh} of $\rmT_h$,
\begin{multline}\label{eq:error.pressure.3}
(1-\chi)\left|\rmT_h(\underline{\omega}_h,\underline{x}_h^\sharp,\underline{y}_h)+\rmT_h(\underline{x}_h^\flat,\underline{\omega}_h,\underline{y}_h)\right|\\
\le C_2 \norm[\bmX,\star]{\rmF_h^\sharp-\rmF_h^\flat}\left(\norm[1,1,h]{(\ulbmuh^\sharp,\ulbmbh^\sharp)}+\norm[1,1,h]{(\ulbmuh^\flat,\ulbmbh^\flat)}\right)
\norm[1,1,h]{(\ulbmvh,\ulbmwh)}
\end{multline}
with $C_1,C_2$ having the same dependencies as $C_0$ in the theorem. The estimate \eqref{eq:var.glob.est.pressure} follows plugging \eqref{eq:error.pressure.2}--\eqref{eq:error.pressure.3} into \eqref{eq:error.pressures.1} to get an upper bound on $\rmd_h(\ulbmvh,e_q) +\rmd_h(\ulbmwh,e_r)$, and by using the inf--sup property \eqref{eq:dh.boundedness}.
\end{proof}

\begin{corollary}[Existence and uniqueness of the solution]\label{cor:estimates.uniqueness}
The following properties hold:
\begin{itemize}
\item \emph{Existence and a priori estimates}: there exists a solution to \eqref{eq:var.glob}, and any
solution $\underline{x}_h$ to this problem satisfies
\begin{align}\label{eq:glob.var.bound.vel.aa}
\norm[\rma,\rma,h]{(\ulbmuh,\ulbmbh)}\le{}& \frac{C_\rma^\frac12}{\min(\nu_k,\nu_m)^\frac12}\norm[\bmX,\star]{\rmF_h},\\
\label{eq:glob.var.bound.vel.11}
\norm[1,1,h]{(\ulbmuh,\ulbmbh)}
\le{}&
\frac{C_\rma}{\min(\nu_k,\nu_m)}\norm[\bmX,\star]{\rmF_h},\\
\label{eq:glob.var.bound.pres}
\norm[L^2(\Omega)]{q_h}+\norm[L^2(\Omega)]{r_h}\le{}& C_0\norm[\bmX,\star]{\rmF_h}\left(1+\frac{C_\rma}{\min(\nu_k,\nu_m)}\norm[\bmX,\star]{\rmF_h}\right).
\end{align}
\item \emph{Uniqueness of the solution}: if
\begin{equation}\label{eq:var.glob.small2}
\norm[\bmX,\star]{\rmF_h}\le \frac{\chi\min(\nu_k,\nu_m)^2}{\sqrt{2}C_{\rmt}C_{\rma}^2},
\end{equation}
then \eqref{eq:var.glob} has a unique solution.
\end{itemize}
\end{corollary}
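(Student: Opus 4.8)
We prove the three items in turn. The a priori estimates and the uniqueness reduce, after some bookkeeping, to Lemma~\ref{lem:stab.variational} and the structural identities \eqref{eq:coer.rmAh}--\eqref{eq:skew.sym.rmTh}; the part that genuinely needs a new ingredient --- and the main obstacle --- is the existence statement, which calls for a topological (Brouwer or degree) argument in the finite-dimensional space $\bmX_h^k$ and, through it, for the coercivity of $\nu_k\rma_h+\nu_m\rma_h$ on discretely divergence-free fields together with the inf--sup stability \eqref{eq:dh.boundedness}.

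\emph{A priori estimates.} Let $\underline{x}_h=(\ulbmuh,\ulbmbh,q_h,r_h)$ solve \eqref{eq:var.glob}. Testing with $\underline{y}_h=\underline{x}_h$ and using the coercivity identity \eqref{eq:coer.rmAh} together with the skew-symmetry \eqref{eq:skew.sym.rmTh} gives $\norm[\rma,\rma,h]{(\ulbmuh,\ulbmbh)}^2=\rmF_h(\underline{x}_h)\le\norm[\bmX,\star]{\rmF_h}\norm[1,1,h]{(\ulbmuh,\ulbmbh)}$; combining with the norm comparison \eqref{eq:bound.11.aa} yields \eqref{eq:glob.var.bound.vel.aa} and then \eqref{eq:glob.var.bound.vel.11}. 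For the pressure estimate I would test \eqref{eq:var.glob} with $\underline{y}_h=(\ulbmvh,\ulbmwh,0,0)$, isolate $\rmd_h(\ulbmvh,q_h)+\rmd_h(\ulbmwh,r_h)$, and bound the remaining terms by Cauchy--Schwarz inequalities together with the boundedness \eqref{eq:ah.stab.and.bound} of $\rma_h$, the boundedness \eqref{eq:bound.rmTh} of $\rmT_h$, and the velocity bound \eqref{eq:glob.var.bound.vel.11} just obtained; dividing by $\norm[1,1,h]{(\ulbmvh,\ulbmwh)}$ and applying the inf--sup property \eqref{eq:dh.boundedness} separately to $q_h$ (choosing $\ulbmwh=\ul{0}$ and $\ulbmvh\in\bmUhkzr$) and to $r_h$ (choosing $\ulbmvh=\ul{0}$ and $\ulbmwh\in\bmUhkn$) then gives \eqref{eq:glob.var.bound.pres}.

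\emph{Existence.} One convenient route is to reduce \eqref{eq:var.glob} to the finite-dimensional discrete divergence-free subspace $Z_h:=\{(\ulbmvh,\ulbmwh)\in\bmUhkzr\times\bmUhkn\,:\,\rmd_h(\ulbmvh,s_h)=0\ \forall s_h,\ \rmd_h(\ulbmwh,z_h)=0\ \forall z_h\}$, on which it reads: find $(\ulbmuh,\ulbmbh)\in Z_h$ such that $\nu_k\rma_h(\ulbmuh,\ulbmvh)+\nu_m\rma_h(\ulbmbh,\ulbmwh)+\rmT_h((\ulbmuh,\ulbmbh),(\ulbmuh,\ulbmbh),(\ulbmvh,\ulbmwh))=\rmF_h(\ulbmvh,\ulbmwh)$ for all $(\ulbmvh,\ulbmwh)\in Z_h$. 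Identifying this with the equation $\Psi=0$ for a continuous map $\Psi:Z_h\to Z_h$ built via the inner product associated with $\norm[\rma,\rma,h]{(\cdot,\cdot)}$, and testing the defining relation with $(\ulbmuh,\ulbmbh)$ itself, \eqref{eq:coer.rmAh} and \eqref{eq:skew.sym.rmTh} give $\langle\Psi(\ulbmuh,\ulbmbh),(\ulbmuh,\ulbmbh)\rangle=\norm[\rma,\rma,h]{(\ulbmuh,\ulbmbh)}^2-\rmF_h(\ulbmuh,\ulbmbh)$, which is positive as soon as $\norm[\rma,\rma,h]{(\ulbmuh,\ulbmbh)}$ exceeds a threshold proportional to $\norm[\bmX,\star]{\rmF_h}$ (using \eqref{eq:bound.11.aa}); a standard corollary of Brouwer's fixed point theorem then produces a zero $(\ulbmuh,\ulbmbh)\in Z_h$. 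The residual linear functional $(\ulbmvh,\ulbmwh)\mapsto\rmF_h(\ulbmvh,\ulbmwh)-\nu_k\rma_h(\ulbmuh,\ulbmvh)-\nu_m\rma_h(\ulbmbh,\ulbmwh)-\rmT_h((\ulbmuh,\ulbmbh),(\ulbmuh,\ulbmbh),(\ulbmvh,\ulbmwh))$ on $\bmUhkzr\times\bmUhkn$ vanishes on $Z_h$ and splits additively over its two arguments, so the inf--sup property \eqref{eq:dh.boundedness} furnishes unique $q_h,r_h\in\POLY{k}_0(\Th)$ for which $(\ulbmuh,\ulbmbh,q_h,r_h)$ solves the full system \eqref{eq:var.glob}. (Alternatively one can run a homotopy/degree argument directly on $\bmX_h^k$ between \eqref{eq:var.glob} and the linear problem $\rmA_h(\underline{x}_h,\cdot)=0$, the a priori bounds keeping the solutions in a fixed ball and the degree at the linear end being nonzero precisely because \eqref{eq:ah.stab.and.bound}--\eqref{eq:dh.boundedness} make $\rmA_h$ an invertible saddle-point operator; or invoke an abstract existence result as in \cite{di-pietro.droniou:2020:hybrid}.)

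\emph{Uniqueness.} Assume \eqref{eq:var.glob.small2} and let $\underline{x}_h^\sharp,\underline{x}_h^\flat$ solve \eqref{eq:var.glob} with the same right-hand side $\rmF_h$. By \eqref{eq:glob.var.bound.vel.11} applied to $\underline{x}_h^\sharp$, $\norm[1,1,h]{(\ulbmuh^\sharp,\ulbmbh^\sharp)}\le\frac{C_{\rma}}{\min(\nu_k,\nu_m)}\norm[\bmX,\star]{\rmF_h}$, so multiplying by $\frac{\sqrt{2}C_{\rmt}C_{\rma}}{\min(\nu_k,\nu_m)}$ shows that \eqref{eq:var.glob.small2} implies exactly hypothesis \eqref{eq:var.glob.small1} of Lemma~\ref{lem:stab.variational} with the given $\chi\in[0,1)$. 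Applying that lemma with $\rmF_h^\sharp=\rmF_h^\flat=\rmF_h$ (so that $\norm[\bmX,\star]{\rmF_h^\sharp-\rmF_h^\flat}=0$), estimate \eqref{eq:var.glob.est1} forces $\norm[\rma,\rma,h]{(\ulbmuh^\sharp-\ulbmuh^\flat,\ulbmbh^\sharp-\ulbmbh^\flat)}=0$, whence $\ulbmuh^\sharp=\ulbmuh^\flat$ and $\ulbmbh^\sharp=\ulbmbh^\flat$ by \eqref{eq:ah.stab.and.bound}, and then $q_h^\sharp=q_h^\flat$, $r_h^\sharp=r_h^\flat$ by \eqref{eq:var.glob.est.pressure}. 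As anticipated, the only nonroutine step is the existence argument: it requires the Brouwer/degree machinery and, crucially, the coercivity--plus--inf--sup structure of $\rmA_h$, which is where \eqref{eq:dh.boundedness} enters beyond its role in the pressure estimates.
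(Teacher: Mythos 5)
Your proposal is correct, and the a priori estimates and the uniqueness argument coincide in substance with the paper's: the paper simply packages the energy identity $\norm[\rma,\rma,h]{(\ulbmuh,\ulbmbh)}^2=\rmF_h(\underline{x}_h)$ and the pressure bound as an application of Lemma \ref{lem:stab.variational} with the pair $(\underline{x}_h^\sharp,\rmF_h^\sharp)=(\ul{0},0)$ (so that \eqref{eq:var.glob.small1} holds with $\chi=0$), whereas you unwrap that lemma and redo the same computation directly; the uniqueness step (use \eqref{eq:glob.var.bound.vel.11} to verify \eqref{eq:var.glob.small1}, then apply \eqref{eq:var.glob.est1} and \eqref{eq:var.glob.est.pressure} with $\rmF_h^\sharp=\rmF_h^\flat$) is identical. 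Where you genuinely diverge is the existence proof. The paper runs the homotopy $\rho\mapsto\rmA_h+\rho\rmT_h-\rmF_h$ on the full mixed space $\bmX_h^k$ and invokes the topological degree result of Lemma \ref{lem:topological.degree}, using that the a priori bounds are uniform in $\rho$ (since $\rho\rmT_h$ obeys \eqref{eq:bound.rmTh} with the same constant) and that the $\rho=0$ problem is a solvable decoupled Stokes system. You instead restrict to the discretely divergence-free kernel $Z_h$, apply the acute-angle corollary of Brouwer's theorem there via the coercivity \eqref{eq:coer.rmAh} and skew-symmetry \eqref{eq:skew.sym.rmTh}, and then recover $q_h,r_h$ from the inf--sup condition \eqref{eq:dh.boundedness} applied to the (additively split) residual functional, which vanishes on $Z_h=Z_h^u\times Z_h^b$. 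Both routes are sound; yours avoids the homotopy and the need to solve the $\rho=0$ problem, at the cost of the extra (standard, finite-dimensional) surjectivity step recovering the pressures, which the paper's full-space formulation gets automatically. The only cosmetic slip is that for $r_h$ the useful direction of \eqref{eq:dh.boundedness} is the sup over $\bmUhkzr$ (or the equivalent sup over $\bmUhkn$ supplied by the same display), but since your bound on $\rmd_h(\ulbmwh,r_h)$ holds for all $\ulbmwh\in\bmUhkn$ this is immaterial.
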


\begin{proof}
The a priori estimates \eqref{eq:glob.var.bound.vel.aa}--\eqref{eq:glob.var.bound.pres} follow from Lemma \ref{lem:stab.variational} with the pair of solution/source given by $(\underline{x}_h^\sharp,\rmF_h^\sharp)=(\ul{0},0)$ and $(\underline{x}_h^\flat,\rmF_h^\flat)=(\underline{x}_h,\rmF_h)$. Indeed, \eqref{eq:var.glob.small1} holds with $\chi=0$ and \eqref{eq:var.glob.est1} together with \eqref{eq:bound.11.aa} yields \eqref{eq:glob.var.bound.vel.aa} and \eqref{eq:glob.var.bound.vel.11}. 
The estimate \eqref{eq:glob.var.bound.pres} directly follows from this bound and \eqref{eq:var.glob.est.pressure}.

\medskip

To prove the existence of a solution, we use Lemma \ref{lem:topological.degree} below. The space $\bmX_h^k$ is a Euclidean space equipped with the norm
	\[
		\norm[\bmX, h]{(\ulbmvh, \ulbmwh, s_h, z_h)}^2 \defeq \norm[1,1,h]{(\ulbmvh,\ulbmwh)}^2 + \norm[\Omega]{s_h}^2 + \norm[\Omega]{z_h}^2,
	\]
	whose inner product is denoted by $(\cdot,\cdot)_{\bmX,h}$.
	For each $\rho\in[0,1]$, we define a function $\Psi_\rho : \bmX_h^k \to \bmX_h^k$ such that for all $\underline{x}_h\in\bmX_h^k$, $\Psi_\rho(\underline{x}_h)$ is the unique element of $\bmX_h^k$ satisfying 
	\[
	(\Psi_\rho(\underline{x}_h),\underline{y}_h)_{\bmX,h}=\rmA_h(\underline{x}_h,\underline{y}_h)+\rho\rmT_h(\underline{x}_h,\underline{x}_h,\underline{y}_h)-\rmF_h(\underline{y}_h)\quad\forall\underline{y}_h\in\bmX_h^k.
	\]
	We shall now show that $\Psi_{\rho}$ satisfies each of the conditions in Lemma \ref{lem:topological.degree}.
	\begin{enumerate}[label=(\roman*)]
		\item As $\bmX_h^k$ is finite dimensional, and each of the forms $\rmA_h$, $\rmT_h$ and $\rmF_h$ are continuous, then $\Psi_{\rho}$ is also continuous.
		\item Suppose $\rho\in[0,1]$ and $(\ulbmuh, \ulbmbh, q_h, r_h)\in\bmX_h^k$ are such that $\Psi_{\rho}(\ulbmuh, \ulbmbh, q_h, r_h) = (\ulbm{0}, \ulbm{0}, 0, 0)$.
		Noting that $\rho\rmT_h$ satisfies the same properties as $\rmT_h$ (with the same constant $\sqrt{2}C_{\rmt}$ in \eqref{eq:bound.rmTh}), we can invoke \eqref{eq:glob.var.bound.vel.11} and \eqref{eq:glob.var.bound.pres} to infer the existence of
$\mu>0$ independent of $\rho$ such that $\norm[\bmX, h]{(\ulbmuh, \ulbmbh, q_h, r_h)} < \mu$.
		\item It is clear that $\Psi_0$ is an affine function and $\Psi_0((\ulbmuh, \ulbmbh, q_h, r_h)) = (\ulbm{0}, \ulbm{0}, 0, 0)$ describes a decoupled Stokes problem for each of $(\ulbmuh, q_h)$ and $(\ulbmbh, r_h)$ for which solutions are known to exist. Upon increasing $\mu$ we also have $\norm[\bmX, h]{(\ulbmuh, \ulbmbh, q_h, r_h)} < \mu$.
	\end{enumerate}
	We may now invoke Lemma \ref{lem:topological.degree} to establish the existence of a solution to $\Psi_1(\underline{x}_h)=0$, which is therefore a solution to \eqref{eq:var.glob}.

\medskip

Finally, to establish the uniqueness of this solution under assumption \eqref{eq:var.glob.small2}, take two solutions $(\underline{x}_h^\sharp,\underline{x}_h^\flat)$ for the same right-hand side $\rmF_h^\sharp=\rmF_h^\flat=\rmF_h$ and assume that \eqref{eq:var.glob.small2} holds. Then \eqref{eq:glob.var.bound.vel.11} shows that \eqref{eq:var.glob.small1} holds.
The bounds \eqref{eq:var.glob.est1} and \eqref{eq:var.glob.est.pressure} then give $\underline{x}_h^\sharp=\underline{x}_h^\flat$.
\end{proof}

\begin{lemma}[{\cite[Lemma 9.6]{di-pietro.droniou:2020:hybrid}}]\label{lem:topological.degree}
	Let $W$ be a finite-dimensional vector space equipped with a norm $\norm[W]{\cdot}$, and let a function $\Psi:W\times[0,1]\to W$ satisfy the following assumptions:
	\begin{enumerate}[label=(\roman*)]
		\item $\Psi$ is continuous;
		\item There exists $\mu > 0$ such that, for any $(w,\rho) \in W\times[0,1]$,
		\[
		\Psi(w, \rho) = 0 \implies \norm[W]{w} \ne \mu;
		\]
		\item $\Psi(\cdot, 0)$ is an affine function and the equation $\Psi(w, 0) = 0$ has a solution $w\in W$ such that $\norm[W]{w} < \mu$.
	\end{enumerate}
	Then there exists $w\in W$ such that $\Psi(w, 1) = 0$ and $\norm[W]{w} < \mu$.
\end{lemma}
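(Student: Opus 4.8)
The plan is to invoke the Brouwer topological degree and its homotopy invariance. Set $B=\{w\in W:\norm[W]{w}<\mu\}$, the open ball of radius $\mu$; since $W$ is finite-dimensional, $\ol{B}$ is compact and $\partial B=\{w\in W:\norm[W]{w}=\mu\}$. Assumption (i) makes $\Psi:\ol{B}\times[0,1]\to W$ a continuous homotopy, and assumption (ii) says precisely that $\Psi(w,\rho)\neq 0$ for every $w\in\partial B$ and every $\rho\in[0,1]$, i.e.\ $0\notin\Psi(\partial B\times[0,1])$. Hence the degree $\deg(\Psi(\cdot,\rho),B,0)$ is well-defined for each $\rho$, and by homotopy invariance it does not depend on $\rho$; in particular
\[
\deg(\Psi(\cdot,1),B,0)=\deg(\Psi(\cdot,0),B,0).
\]

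Next I would compute the right-hand side. By (iii) we write $\Psi(w,0)=Lw+c$ with $L$ a linear endomorphism of $W$ and $c\in W$, and there is $w_0\in B$ with $Lw_0+c=0$. The key claim is that $L$ is invertible: if not, then $\ker L\neq\{0\}$ and the zero set of $\Psi(\cdot,0)$ equals the affine subspace $w_0+\ker L$, which is unbounded and therefore meets $\partial B$ — contradicting assumption (ii) at $\rho=0$. So $L$ is invertible, $\Psi(\cdot,0)$ is an affine homeomorphism of $W$ with $\Psi(\cdot,0)^{-1}(0)=\{-L^{-1}c\}=\{w_0\}\subset B$, and consequently $\deg(\Psi(\cdot,0),B,0)=\mathrm{sign}(\det L)\in\{-1,1\}$.

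Combining the two steps gives $\deg(\Psi(\cdot,1),B,0)\neq 0$, and the solution property of the degree produces $w\in B$ with $\Psi(w,1)=0$. Since $w\in B$ we have $\norm[W]{w}<\mu$, which is exactly the assertion of the lemma.

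The only non-routine point is the invertibility of $L$: this is the one place where assumptions (ii) and (iii) are genuinely combined, via the elementary remark that a nontrivial affine subspace cannot be contained in a bounded set. Everything else is a direct application of the standard properties of the Brouwer degree (well-posedness when $0$ avoids the boundary, homotopy invariance, the solution property, and the value $\pm 1$ on homeomorphisms). One could alternatively avoid degree theory and argue by a continuation/Brouwer fixed-point argument, but appealing to the degree is the most economical route and is what I would carry out here.
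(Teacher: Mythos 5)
Your proof is correct: the degree-theoretic argument (homotopy invariance justified by assumption (ii), invertibility of the linear part of $\Psi(\cdot,0)$ deduced from the fact that a nontrivial affine zero set would be unbounded and hence meet the sphere $\norm[W]{w}=\mu$, and the resulting degree $\pm 1$ propagated to $\rho=1$) is exactly the standard proof of this lemma. The paper itself does not reprove the result but cites it from \cite[Lemma 9.6]{di-pietro.droniou:2020:hybrid}, where the argument is the same Brouwer-degree homotopy argument you give, so your proposal matches the intended proof.
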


\subsection{Proof of the main results}

\begin{proof}[Proof of Theorem \ref{thm:existence}]
Follows directly from Corollary \ref{cor:estimates.uniqueness} and applying \eqref{eq:discrete.poincare} to bound the dual norm of $\rmF_h$.
\end{proof}

\begin{proof}[Proof of Theorem \ref{thm:convergence}]
Owing to \eqref{eq:discrete.apriori.bound}, $(\norm[1,h]{\ulbmuh})_{h\in\mathcal H}$, $(\norm[1,h]{\ulbmbh})_{h\in\mathcal H}$, $(\norm[\Omega]{q_h})_{h\in\mathcal H}$ and $(\norm[\Omega]{r_h})_{h\in\mathcal H}$ are
bounded. As noted in the proof of Theorem \ref{th:discrete.rellich}, the discrete compactness result \cite[Theorem 9.29]{di-pietro.droniou:2020:hybrid} for Dirichlet boundary conditions also holds under Assumption \ref{assum:star.shaped}. Using this result (for the velocity field) and Theorem \ref{th:discrete.rellich} (for the magnetic field), the bounds mentioned above allow us to extract subsequences such that
\begin{equation}\label{eq:weak.convergences}
\begin{aligned}
\bmuh\to \bmu\mbox{ and }\bmbh\to\bmb&\quad\mbox{ in $L^s(\Omega)^3$ for all $s\in[1,6)$},\\
\nabla_h \rh{k+1}\ulbmuh\to \nabla\bmu\mbox{ and }\nabla_h\rh{k+1}\ulbmbh\to\nabla\bmb&\quad\mbox{ weakly in $L^2(\Omega)^{3\times 3}$},\\
\Gh{l}\ulbmuh\to \nabla\bmu\mbox{ and }\Gh{l}\ulbmbh\to\nabla\bmb&\quad\mbox{ weakly in $L^2(\Omega)^{3\times 3}$, for all integers $l\ge 0$},\\
q_h\to q\mbox{ and }r_h\to r&\quad\mbox{ weakly in $L^2(\Omega)$},
\end{aligned}
\end{equation}
where $\bmu\in\bmU$, $\bmb\in\bmB$,
and $(q,r)\in P^2$ (the zero average condition following from the weak convergences of $q_h,r_h$, which have zero average).

Take $\bmv\in\bmC^\infty_c(\Omega)$, $\bmw\in\bmC^\infty_{\bmn}(\Omega):=\{\bmz\in\bmC^\infty(\overline{\Omega})\,:\,\bmz\cdot\bmn=0\mbox{ on $\partial\Omega$}\}$, and $(s,z)\in (C^\infty_c(\Omega)\cap P)^2$,
and use $(\bmIhk\bmv,\bmIhk\bmw,\pihzr{k}s,\pihzr{k}z)$ as test functions in the discrete scheme \eqref{eq:discrete}.
The convergences \eqref{eq:weak.convergences} and the strong convergence of the interpolates in $H^1(\Omega)^3$ (see \cite[Proposition 9.31]{di-pietro.droniou:2020:hybrid}) enable us to reason as in Step 2 of the proof of \cite[Theorem 9.32]{di-pietro.droniou:2020:hybrid} to pass to the limit in the linear and non-linear terms of the scheme, to see that $(\bmu,\bmb,q,r)$ satisfies \eqref{eq:variational} for test functions in $\bmC^\infty_c(\Omega)\times \bmC^\infty_{\bmn}(\Omega)\times (C^\infty_c(\Omega)\cap P)^2$. Since this space is dense in $\bmU\times\bmB\times P^2$ ($\Omega$ being polyhedral, the density of $\bmC^\infty_{\bmn}(\Omega)$ in $\bmB$ can be established following the approach in \cite{D02-2}), this establishes that $(\bmu,\bmb,q,r)$ is a solution to the continuous problem.

To prove the strong convergence of the gradients of the velocity and the magnetic field, we use $(\ulbmuh,\ulbmbh, q_h, r_h)$ as test functions in the scheme \eqref{eq:var.glob}. Owing to \eqref{eq:coer.rmAh} and \eqref{eq:skew.sym.rmTh}, this yields
\begin{equation}\label{eq:energy1}
\nu_k\rma_h(\ulbmuh,\ulbmuh)+\nu_m\rma_h(\ulbmbh,\ulbmbh)=(\bmf,\bmuh)_\Omega+(\bmg,\bmbh)_\Omega.
\end{equation}
By the convergences of $\bmuh$ and $\bmbh$, we have, as $h\to 0$,
\begin{equation}\label{eq:energy2}
(\bmf,\bmuh)_\Omega+(\bmg,\bmbh)_\Omega\to (\bmf,\bmu)_\Omega+(\bmg,\bmb)_\Omega=\nu_k \norm[\Omega]{\nabla\bmu}^2+\nu_m\norm[\Omega]{\nabla\bmb}^2,
\end{equation}
where the equality is obtained plugging $(\bmv,\bmw,s,z)=(\bmu,\bmb,q,r)$ in the continuous weak formulation \eqref{eq:variational}.
On the other hand, by definition of $\rma_h$ it holds $\rma_h(\ulbmvh,\ulbmvh)\ge \norm[\Omega]{\nabla_h\rh{k+1}\ulbmvh}^2$
for all $\ulbmvh\in\bmUhk$.
Combined with \eqref{eq:energy1} and \eqref{eq:energy2}, this yields
\[
\limsup_{h\to 0}\left(\nu_k\norm[\Omega]{\nabla_h\rh{k+1}\ulbmuh}^2+\nu_m\norm[\Omega]{\nabla_h\rh{k+1}\ulbmbh}^2\right)
\le\nu_k \norm[\Omega]{\nabla\bmu}^2+\nu_m\norm[\Omega]{\nabla\bmb}^2.
\]
This inequality shows that the weak convergence of $(\nabla_h\rh{k+1}\ulbmuh,\nabla_h\rh{k+1}\ulbmbh)$ in $(L^2(\Omega)^{3\times 3})^2$ is actually strong.

The strong convergence of the $(q_h,r_h)$ is obtained introducing $(\bmv_q,\bmw_r)\in \bmU\times\bmB$ such that $\DIV\bmv_q=q$, $\DIV\bmw_r=r$, $\norm[H^1(\Omega)^3]{\bmv_q}\lesssim\norm[\Omega]{q}$ and $\norm[H^1(\Omega)^3]{\bmw_r}\lesssim\norm[\Omega]{r}$, and utilising $\bmIhk\bmv_q$ in \eqref{eq:discrete.fluid}, $\bmIhk\bmw_r$ in \eqref{eq:discrete.magnetic}, and reasoning as in Step 4 of the proof of \cite[Theorem 9.32]{di-pietro.droniou:2020:hybrid}.
We omit the details. 
\end{proof}

\begin{proof}[Proof of Theorem \ref{thm:uniqueness}]
Using Corollary \ref{cor:estimates.uniqueness}, we only have to show that \eqref{eq:data.smallness} implies \eqref{eq:var.glob.small2}, which directly follows from
\begin{align*}
|\rmF_h((\ulbmvh,\ulbmwh,0,0))|=\left|(\bmf,\bmvh)_\Omega+(\bmg,\bmwh)_\Omega\right|
\le{}& \left(\norm[\Omega]{\bmf}^2+\norm[\Omega]{\bmg}^2\right)^{\frac12}
\left(\norm[\Omega]{\bmvh}^2+\norm[\Omega]{\bmwh}^2\right)^{\frac12}\\
\le{}& C_p\left(\norm[\Omega]{\bmf}^2+\norm[\Omega]{\bmg}^2\right)^{\frac12}
\norm[1,1,h]{(\bmvh,\bmwh)},
\end{align*}
where the conclusion follows using the Poincar\'e inequality \eqref{eq:discrete.poincare}.
\end{proof}

\begin{proof}[Proof of Theorem \ref{thm:discrete.energy.error}]
We apply Lemma \ref{lem:stab.variational} with $\underline{x}_h^\sharp=(\ulbmuh,\ulbmbh,q_h,r_h)$ solution of the scheme \eqref{eq:discrete} and $\underline{x}_h^\flat=(\bmIhk\bmu,\bmIhk\bmb,\pihzr{k}q,\pihzr{k}r)$ interpolate on $\bmX_h^k$ of the exact solution $(\bmu,\bmb,q,r)$ of the continuous problem \eqref{eq:variational}. 
Decomposing $\bmf$ and $\bmg$ according to \eqref{eq:mhd.fluid} and \eqref{eq:mhd.magnetic}, we see that $\underline{x}_h^\flat$ solves \eqref{eq:var.glob} with source term
\[
\rmF_h^\flat(\underline{y}_h)=\rmF_h^\sharp(\underline{y}_h)-\calE_{k,h}((\bmu,\bmb,q,r);\ulbmvh)-\calE_{m,h}((\bmu,\bmb,q,r);\ulbmwh),
\]
where we recall that the consistency errors $\calE_{k,h}$ and $\calE_{m,h}$ are defined in \eqref{eq:kinetic.cons.def} and \eqref{eq:magnetic.cons.def}.
As established in the proof of Theorem \ref{thm:uniqueness}, the assumption \eqref{eq:data.smallness} shows that $(\ulbmuh^\sharp,\ulbmbh^\sharp)$ satisfies \eqref{eq:var.glob.small1}. The error estimates \eqref{eq:discrete.energy.error.ub} and \eqref{eq:discrete.energy.error.qr} then follow from \eqref{eq:var.glob.est1}--\eqref{eq:var.glob.est.pressure} and the consistency estimates \eqref{eq:kinetic.cons.error}--\eqref{eq:magnetic.cons.error}, together with the a priori estimates \eqref{eq:glob.var.bound.vel.11}, \eqref{eq:continuous.energy} (see below) and the interpolation bound of \cite[Proposition 2.2]{di-pietro.droniou:2020:hybrid} to write $\norm[1,1,h]{(\ulbmuh,\ulbmbh)}+\norm[1,1,h]{(\bmIhk\bmu,\bmIhk\bmb)}\le C\Brac{\norm[\Omega]{\bmf}^2+\norm[\Omega]{\bmg}^2}^{1/2}$ for some $C$ independent of $h$, $\bmf$ and $\bmg$, but dependent on $\nu_k$ and $\nu_m$.
\end{proof}

The following a priori bound on the continuous solution was used in the proof of Theorem \ref{thm:discrete.energy.error} above.

\begin{proposition}[Continuous a priori bound]\label{prop:continuous.apriori.bound}
Any solution to \eqref{eq:variational} satisfies
\begin{equation}
	\nu_k \a(\bmu, \bmu) + \nu_m \a(\bmb, \bmb) \lesssim \nu_k^{-1}\norm[\Omega]{\bmf}^2 + \nu_m^{-1}\norm[\Omega]{\bmg}^2. \label{eq:continuous.energy}
\end{equation}
\end{proposition}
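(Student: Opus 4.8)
The plan is to run the classical energy argument for viscous flows: test the weak formulation with the solution itself. The only point specific to MHD is that the two coupling (Lorentz-type) trilinear terms must cancel against each other; everything else is the textbook a priori bound for Navier--Stokes.

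First I would record that the divergence constraints force $\DIV\bmu=0$ and $\DIV\bmb=0$ \emph{pointwise}. Indeed, \eqref{eq:variational.fluid.div} gives $\int_\Omega(\DIV\bmu)s=0$ for all $s\in P$; since $\bmu\in\HONEzr(\Omega)^3$ we have $\int_\Omega\DIV\bmu=0$, so $\DIV\bmu$ is itself an admissible test function and hence $\DIV\bmu=0$. For $\bmb\in\bmB$, the boundary condition $\bmb\cdot\bmn=0$ yields $\int_\Omega\DIV\bmb=\int_{\partial\Omega}\bmb\cdot\bmn=0$, and then \eqref{eq:variational.magnetic.div} gives $\DIV\bmb=0$ in the same way. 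In particular the pressure contributions vanish when testing against the solution: $\rmd(\bmu,q)=-\int_\Omega(\DIV\bmu)q=0$ and $\rmd(\bmb,r)=0$.

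Next, take $\bmv=\bmu$ in \eqref{eq:variational.fluid}, $\bmw=\bmb$ in \eqref{eq:variational.magnetic}, and add the two identities. Here I use the standard integration-by-parts property of the trilinear form \eqref{eq:t.def}: if $\DIV\bmz=0$ and $\bmz\cdot\bmn=0$ on $\partial\Omega$ then $\rmt(\bmz,\bmy_1,\bmy_2)=-\rmt(\bmz,\bmy_2,\bmy_1)$ (hence $\rmt(\bmz,\bmy,\bmy)=0$), all products being well defined by the Sobolev embedding $\HONE\hookrightarrow\LP{6}$ in dimension three. Since $\bmu$ (zero on $\partial\Omega$) and $\bmb$ are both solenoidal and tangent to $\partial\Omega$, this kills the self-interaction terms, $\rmt(\bmu,\bmu,\bmu)=0$ and $\rmt(\bmu,\bmb,\bmb)=0$. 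For the coupling terms, applying the same property with $\bmz=\bmb$ gives $\rmt(\bmb,\bmu,\bmb)=-\rmt(\bmb,\bmb,\bmu)$, so the pair $-\rmt(\bmb,\bmb,\bmu)$ (from the fluid equation) and $-\rmt(\bmb,\bmu,\bmb)$ (from the magnetic equation) sums to zero. We are left with the energy identity
\[
\nu_k\a(\bmu,\bmu)+\nu_m\a(\bmb,\bmb)=\brac[\Omega]{\bmf,\bmu}+\brac[\Omega]{\bmg,\bmb}.
\]
Finally, I would bound the right-hand side by Cauchy--Schwarz and Poincar\'e: $\norm[\Omega]{\bmu}\lesssim\norm[\Omega]{\nabla\bmu}=\a(\bmu,\bmu)^{1/2}$ on $\HONEzr(\Omega)^3$, and the analogous inequality $\norm[\Omega]{\bmb}\lesssim\norm[\Omega]{\nabla\bmb}$ on $\bmB$ (which holds because $\nabla$ has trivial kernel on $\bmB$ — the identity $\int_{\partial\Omega}(\bmc\cdot\bmn)(\bmc\cdot\bmx)\,\mathrm dS=|\bmc|^2|\Omega|$ shows a constant field tangent to $\partial\Omega$ must vanish — combined with the compact embedding $\HONE\hookrightarrow\LTWO$, i.e.\ a Peetre--Tartar argument). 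This gives $\brac[\Omega]{\bmf,\bmu}+\brac[\Omega]{\bmg,\bmb}\lesssim\norm[\Omega]{\bmf}\,\a(\bmu,\bmu)^{1/2}+\norm[\Omega]{\bmg}\,\a(\bmb,\bmb)^{1/2}$; a weighted Young inequality ($xy\le\tfrac{\nu}{2}x^2+\tfrac1{2\nu}y^2$) applied termwise, absorbing $\tfrac{\nu_k}{2}\a(\bmu,\bmu)+\tfrac{\nu_m}{2}\a(\bmb,\bmb)$ into the left-hand side, yields \eqref{eq:continuous.energy}.

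The only genuinely MHD-specific step is the cancellation of the two coupling terms in the energy identity, which hinges on the skew-symmetry of $\rmt$ in its last two arguments for a solenoidal, tangent convecting field — and this is exactly why both $\bmu$ and $\bmb$ are required to be divergence-free with vanishing normal trace. A minor technical caveat is the Poincar\'e inequality on $\bmB$, which is not the standard $\HONEzr$ one but still holds for the reason indicated above.
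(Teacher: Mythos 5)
Your proof is correct and follows essentially the same route as the paper's: test \eqref{eq:variational.fluid} with $\bmu$ and \eqref{eq:variational.magnetic} with $\bmb$, use the skew-symmetry of $\rmt$ (valid for a solenoidal convecting field tangent to $\partial\Omega$) to cancel the self-interaction and coupling terms, and conclude via Cauchy--Schwarz, Poincar\'e and Young. You additionally make explicit two points the paper leaves implicit --- that the discrete-in-$s$ constraints force $\DIV\bmu=\DIV\bmb=0$ pointwise, and the justification of the Poincar\'e inequality on $\bmB$ via the trivial kernel of $\nabla$ on that space --- both of which are correct and welcome.
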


\begin{proof}
The following integration by parts formula holds for all $\bmw,\bmv,\bmz\in\HONE(\Omega)^3$:
\begin{equation}\label{eq:convective.ibp}
	\int_{\Omega}(\bmv \cdot \nabla)\bmw \cdot \bmz + \int_{\Omega}(\bmv \cdot \nabla)\bmz \cdot \bmw + \int_{\Omega}(\nabla \cdot \bmv)(\bmw \cdot \bmz) = \int_{\partial \Omega} (\bmv \cdot \bmn) (\bmw \cdot \bmz).
\end{equation}
Thus, for any $\bmv \in \HONE(\Omega)^3$ such that $\nabla \cdot \bmv = 0$ in $\Omega$ and $\bmv \cdot \bmn = 0$ on $\partial\Omega$,
it holds that
\begin{equation}
	\rmt(\bmv, \bmw, \bmz) = -\rmt(\bmv, \bmz, \bmw) \qquad \forall \bmw, \bmz \in \HONE(\Omega)^3. \label{eq:t.skew.sym}
\end{equation}
Set $\bmv = \bmu$ in \eqref{eq:variational.fluid}, $\bmw = \bmb$ in \eqref{eq:variational.magnetic}, and add the resulting equations to get
\begin{equation}	
\nu_k \a(\bmu, \bmu) + \rmt(\bmu, \bmu, \bmu) - \rmt(\bmb, \bmb, \bmu) 
+\nu_m \a(\bmb, \bmb) + \rmt(\bmu, \bmb, \bmb) - \rmt(\bmb, \bmu, \bmb) =\brac[\Omega]{\bmf, \bmu}+ \brac[\Omega]{\bmg, \bmb},
\label{eq:energy.fluid.magnetic}
\end{equation}
where we have invoked the zero divergence conditions \eqref{eq:variational.fluid.div}--\eqref{eq:variational.magnetic.div} to cancel out the terms $\rmd(\bmu,q)$ and $\rmd(\bmb,r)$. From equation \eqref{eq:t.skew.sym} we infer $\rmt(\bmu, \bmu, \bmu) = \rmt(\bmu, \bmb, \bmb) = 0$ and 
$\rmt(\bmb, \bmb, \bmu) = - \rmt(\bmb, \bmu, \bmb)$. Plugged into \eqref{eq:energy.fluid.magnetic} this yields
\[
	\nu_k \a(\bmu, \bmu) + \nu_m \a(\bmb, \bmb) = \brac[\Omega]{\bmf, \bmu} + \brac[\Omega]{\bmg, \bmb}.
\]
By invoking Cauchy--Schwarz and Poincar\'{e} inequalities (the latter of which holds due to the boundary conditions on $\bmu$, $\bmb$) we readily infer \eqref{eq:continuous.energy}.
\end{proof}

\section{Numerical tests}\label{sec:numerical}

We provide here a variety of numerical tests for the scheme \eqref{eq:discrete} on two families of polyhedral meshes. The method is implemented using the \texttt{HArDCore} open source C++ library available at \url{https://github.com/jdroniou/HArDCore}. The nonlinear algebraic system is resolved via a Newton iterative scheme:
\begin{equation*}
	\bmU^{(n + 1)} = \bmU^{(n)} + \bmdelta^{(n)};
\end{equation*}
\begin{equation}\label{eq:newton.system}
	\rmD\rmG(\bmU^{(n)})\bmdelta^{(n)} = -\rmG(\bmU^{(n)}),
\end{equation}
where $\bmU^{(n)}$ denotes the vector of unknowns at the $n^{\textrm{th}}$ iterate, $\bmdelta^{(n)}$ denotes the Newton step, $\rmG$ is a nonlinear function such that the system \eqref{eq:var.glob} corresponds to $\rmG(\bmU)=\bm{0}$, with Jacobian matrix denoted by $\rmD\rmG$.
The system \eqref{eq:newton.system} is initialised with a vector of zeroes. At each step, we measure the discrete $l^2$ norm of $\rmG(\bmU^{(n)})$ and once its value relative to the initial value is less than $10^{-6}$ we exit the Newton scheme. At each iteration, the degrees--of--freedom (DOFs) of $\ulbmuh$ and $\ulbmbh$ in each element of the system \eqref{eq:newton.system}, and all but one pressure DOF in each element are eliminated via static condensation \cite[cf.][Section 6.2]{di-pietro.ern.ea:2016:discontinuous}. The resulting linear system is solved using the Pardiso solver found in the \texttt{Eigen} library (which internally invokes the Intel Math Kernel Library (Intel MKL)), with documentation available at \url{https://eigen.tuxfamily.org/dox/index.html}. 

The domain is taken to be the unit cube $\Omega= (0,1)^3$. We consider source terms $\bmf, \bmg$ corresponding to the exact solution $(\bmu,\bmb,q,r)$, $\bmu = (u_i)_{i=1}^3$, $\bmb = (b_i)_{i=1}^3$ where
\begin{align*}
	u_1(x_1,x_2,x_3) \eq \sin(\pi x_1)^2 \sin(\pi x_2) \sin(\pi x_3) \sin(\pi (x_2 - x_3)),\nl
	u_2(x_1,x_2,x_3) \eq \sin(\pi x_1) \sin(\pi x_2)^2 \sin(\pi x_3) \sin(\pi (x_3 - x_1)),\nl
	u_3(x_1,x_2,x_3) \eq \sin(\pi x_1) \sin(\pi x_2) \sin(\pi x_3)^2 \sin(\pi (x_1 - x_2)),\nl
	b_1(x_1,x_2,x_3) \eq -\frac12\sin(\pi x_1) \cos(\pi x_2) \cos(\pi x_3),\nl
	b_2(x_1,x_2,x_3) \eq \cos(\pi x_1) \sin(\pi x_2) \cos(\pi x_3),\nl
	b_3(x_1,x_2,x_3) \eq -\frac12\cos(\pi x_1) \cos(\pi x_2) \sin(\pi x_3),\nl
	q(x_1,x_2,x_3) \eq \sin(2\pi x_1) \sin(2\pi x_2) \sin(2\pi x_3), \nl
	r(x_1,x_2,x_3) \eq 0.
\end{align*}
The velocity field $\bmu$ is designed to satisfy $\DIV \bmu = 0$, and $\bmu = \bm{0}$ on $\partial \Omega$, and the magnetic field $\bmb$ (taken from \cite{he.dong.ea:2022:uniform}) is designed to satisfy $\DIV \bmb = 0$, $\bmb \cdot \bmn = 0$ on $\partial \Omega$, and $\bmn \times (\nabla\times \bmb) = \bm{0}$ on $\partial \Omega$.

The stabilisation bilinear form is taken to be
\begin{align*}
	\sT(\ulbmvT, \ulbmwT) ={}& \int_T \nabla(\bmvT - \bmpiTzr{k}\rT{k+1}\ulbmvT) : \nabla(\bmwT - \bmpiTzr{k}\rT{k+1}\ulbmwT) \\
	 &+ \hT^{-1}\int_\bdryT (\bmvFT - \bmpiFTzr{k}\rT{k+1}\ulbmvT) \cdot (\bmwFT - \bmpiFTzr{k}\rT{k+1}\ulbmwT).
\end{align*}

The accuracy of the scheme is measured by the following relative energy errors,
\[
	E_{\a,\bmu,h}^2 \defeq \nu_k \frac{\ah(\ulbmuh - \bmIhk\bmu, \ulbmuh - \bmIhk\bmu)}{\ah(\bmIhk\bmu, \bmIhk\bmu)}\quad\textrm{and}\quad E_{\a,\bmb,h}^2 \defeq \nu_m \frac{\ah(\ulbmbh - \bmIhk\bmb, \ulbmbh - \bmIhk\bmb)}{\ah(\bmIhk\bmb, \bmIhk\bmb)}, 
\]
relative error in the Lagrange multiplier for the fluid equation
\[
	E_{q,h} \defeq \frac{\norm[\Omega]{q_h-\pihzr{k}q}}{\norm[\Omega]{\pihzr{k}q}},
\]
and relative $L^2$ errors, 
\[
E_{0,\bmu,h} \defeq \frac{\norm[0, h]{\ulbmuh - \bmIhk\bmu}}{\norm[0, h]{\bmIhk\bmu}}\quad\textrm{and}\quad E_{0,\bmb,h} \defeq \frac{\norm[0, h]{\ulbmbh - \bmIhk\bmb}}{\norm[0, h]{\bmIhk\bmb}},
\]
where $\norm[0, h]{{\cdot}}:\bmUhkn\to\R$ is an $L^2$-like norm defined for all $\ulbmvh\in\bmUhkn$ via
\[
	\norm[0, h]{\ulbmvh} \defeq \BRAC{\sum_{T\in\Th}\SqBrac{\norm[T]{\bmvT}^2 + \hT \norm[\bdryT]{\bmvFT}^2}}^\frac12.
\]
We also define a fluid pressure error by
\[
	E_{p,h} \defeq \frac{\norm[\Omega]{p_h-p}}{\norm[\Omega]{p}}
\]
where $p_h$ is defined by equation \eqref{eq:discrete.pressure.def} and we consider a mass density $\rho=1$.

For simplicity of presentation we only consider $\nu_k=\nu_m=0.1$. We note that the singular perturbation problem $\nu_k \to 0$, $\nu_m\to0$ comes at a very high computational expense, and requires significant effort in resolving the non-linear algebraic system (quite often and even with relaxation, the iterative algorithms -- whether Newton or pseudo-transient continuation -- do not converge for values $\le 0.1$ of these parameters). Among polytopal methods for MHD, the reference \cite{gleason.peters.ea:2022:divergence} has put some effort into considering these cases, with Reynolds numbers as large as $10^4$; however the numerical tests were only considered in two dimensions, which is computationally much less challenging. Some 3D tests are conducted for the lowest order VEM in \cite{vem:mhd:22}, but only with Reynolds and magnetic Reynolds numbers equal to $1$. Even for linearised MHD, the 3D tests conducted for the DG method in \cite{houston.schotzau.ea:2009:mixed} only consider parameters as low as $\nu_k=\nu_m=0.1$. Numerical tests show that adding a sufficiently large reaction term (with scaling $\sim\min \{\nu_k,\nu_m\}^{-1}$) to the equations -- which is reminiscent of the transient problem with a small time step -- greatly improves the performance of the iterative methods and allows for resolution of the nonlinear system for far smaller parameters. This suggests the transient problem is simpler to solve than the stationary one. However, we only consider the static problem in this work.

\subsection{Tetrahedral meshes}

We consider here a sequence of tetrahedral meshes with data given in Table \ref{table:TetGen}. The mesh parameter $\varrho$ is computed as the maximum ratio of an element diameter to its in-radius (which is also its star-radius as the elements are convex). We test with various values of polynomial degree $k$, and plot the energy errors versus mesh size in Figure \ref{fig:TetGen.energy} and $L^2$ errors in Figure \ref{fig:TetGen.L2}. The energy error seems to converge as predicted in Theorem \ref{thm:discrete.energy.error} (and Section \ref{sec:estimate.fluid.pressure} for the fluid pressure), and the $L^2$ errors on the velocities and magnetic fields appear to enjoy super convergence.

\begin{table}[H]
	\centering
	\pgfplotstableread{data/TetGen_mesh_data.dat}\loadedtable
	\pgfplotstabletypeset
	[
	columns={hMax,NCells,NIFaces,Skew}, 
	columns/hMax/.style={column name=\(h\)},
	columns/NCells/.style={column name=\(\CARD {\Th}\)},
	columns/NIFaces/.style={column name=\(\CARD {\Fh^i}\)},
	columns/Skew/.style={column name=\(\varrho\)},
	every head row/.style={before row=\toprule,after row=\midrule},
	every last row/.style={after row=\bottomrule} 
	]\loadedtable
	\caption{Parameters of tetrahedral meshes.}
	\label{table:TetGen}
\end{table}

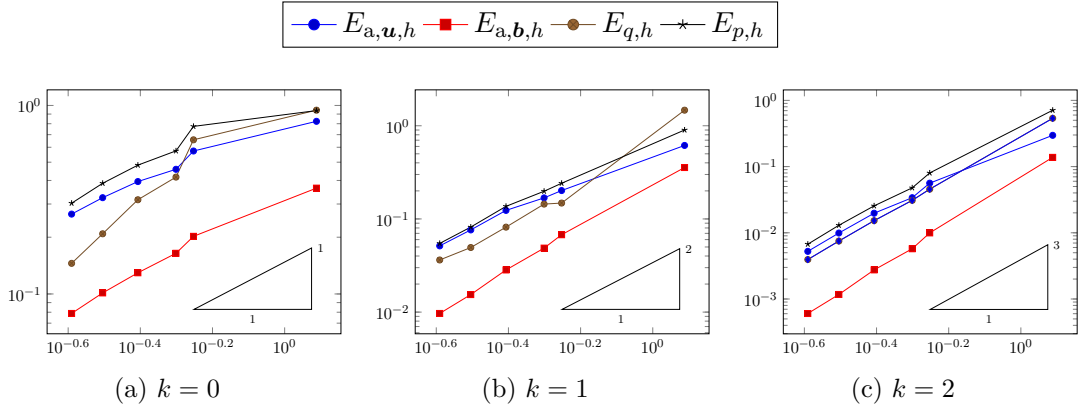
\begin{figure}[H]
\centering
\ref{TetGen:fig1}
\vspace{0.5cm}\\
\subcaptionbox{$k=0$}
{
	\begin{tikzpicture}[scale=0.57]
	\begin{loglogaxis}[ legend columns=4, legend to name=TetGen:fig1 ]
	\addplot table[x=hMax,y=VelocityEnergyError] {data/TetGen_k0_v01.dat};
	\addplot table[x=hMax,y=MagneticEnergyError] {data/TetGen_k0_v01.dat};
	\addplot table[x=hMax,y=PressureEnergyError] {data/TetGen_k0_v01.dat};
	\addplot table[x=hMax,y=FluidPressureError] {data/TetGen_k0_v01_with_fluid_pressure.dat};
	\legend{$E_{\a,\bmu,h}$,$E_{\a,\bmb,h}$,$E_{q,h}$,$E_{p,h}$};
	\logLogSlopeTriangle{0.90}{0.4}{0.1}{1}{black};
	\end{loglogaxis}
	\end{tikzpicture}
}
\subcaptionbox{$k=1$}
{
	\begin{tikzpicture}[scale=0.57]
	\begin{loglogaxis}
	\addplot table[x=hMax,y=VelocityEnergyError] {data/TetGen_k1_v01.dat};
	\addplot table[x=hMax,y=MagneticEnergyError] {data/TetGen_k1_v01.dat};
	\addplot table[x=hMax,y=PressureEnergyError] {data/TetGen_k1_v01.dat};
	\addplot table[x=hMax,y=FluidPressureError] {data/TetGen_k1_v01_with_fluid_pressure.dat};
	\logLogSlopeTriangle{0.90}{0.4}{0.1}{2}{black};
	\end{loglogaxis}
	\end{tikzpicture}
}
\subcaptionbox{$k=2$}
{
	\begin{tikzpicture}[scale=0.57]
	\begin{loglogaxis}
	\addplot table[x=hMax,y=VelocityEnergyError] {data/TetGen_k2_nu01.dat};
	\addplot table[x=hMax,y=MagneticEnergyError] {data/TetGen_k2_nu01.dat};
	\addplot table[x=hMax,y=PressureEnergyError] {data/TetGen_k2_nu01.dat};
	\addplot table[x=hMax,y=FluidPressureError] {data/TetGen_k2_v01_with_fluid_pressure.dat};
	\addplot table[x=hMax,y=PressureEnergyError] {data/TetGen_k2_v01_with_fluid_pressure.dat};
	\logLogSlopeTriangle{0.90}{0.4}{0.1}{3}{black};
	\end{loglogaxis}
	\end{tikzpicture}
}
\caption{Energy Error vs $h$}
\label{fig:TetGen.energy}
\end{figure}

\begin{figure}[H]
	\centering
	\ref{TetGen:fig2}
	\vspace{0.5cm}\\
	\subcaptionbox{$k=0$}
	{
		\begin{tikzpicture}[scale=0.57]
		\begin{loglogaxis}[ legend columns=2, legend to name=TetGen:fig2 ]
		\addplot table[x=hMax,y=VelocityL2Error] {data/TetGen_k0_v01.dat};
		\addplot table[x=hMax,y=MagneticL2Error] {data/TetGen_k0_v01.dat};
		\legend{$E_{0,\bmu,h}$,$E_{0,\bmb,h}$};
		\logLogSlopeTriangle{0.90}{0.4}{0.1}{2}{black};
		\end{loglogaxis}
		\end{tikzpicture}
	}
	\subcaptionbox{$k=1$}
	{
		\begin{tikzpicture}[scale=0.57]
		\begin{loglogaxis}
		\addplot table[x=hMax,y=VelocityL2Error] {data/TetGen_k1_v01.dat};
		\addplot table[x=hMax,y=MagneticL2Error] {data/TetGen_k1_v01.dat};
		\logLogSlopeTriangle{0.90}{0.4}{0.1}{3}{black};
		\end{loglogaxis}
		\end{tikzpicture}
	}
	\subcaptionbox{$k=2$}
	{
		\begin{tikzpicture}[scale=0.57]
		\begin{loglogaxis}
		\addplot table[x=hMax,y=VelocityL2Error] {data/TetGen_k2_nu01.dat};
		\addplot table[x=hMax,y=MagneticL2Error] {data/TetGen_k2_nu01.dat};
		\logLogSlopeTriangle{0.90}{0.4}{0.1}{4}{black};
		\end{loglogaxis}
		\end{tikzpicture}
	}
	\caption{$L^2$ Error vs $h$}
	\label{fig:TetGen.L2}
\end{figure}
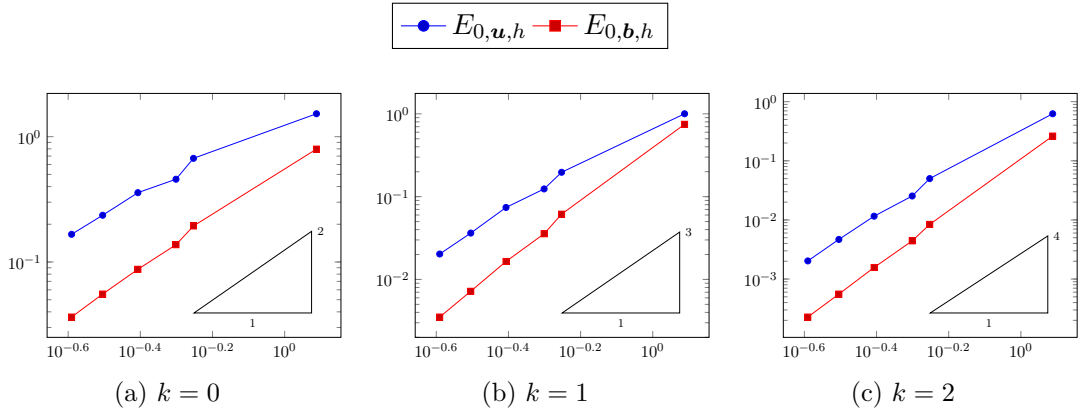

\subsection{Voronoi meshes}

To conclude the tests we run the scheme on a family of Voronoi meshes with data displayed in Table \ref{table:Voro1}. The energy error is plotted in Figure \ref{fig:Voro1.energy} and $L^2$ error is plotted in Figure \ref{fig:Voro1.L2}. It appears that the error in the velocity terms begins to converge sub-optimally before approaching optimal convergence. This effect is particularly apparent for the lowest order case $k=0$, with error plots given in Figure \ref{fig:Voro1.energy} (a) and \ref{fig:Voro1.L2} (a). The reason for this is potentially due to the scheme only approaching, when $k$ is small, the asymptotic convergence rate once the mesh size is sufficiently small.

\begin{table}[H]
\centering
\pgfplotstableread{data/Voro1_mesh_data.dat}\loadedtable
\pgfplotstabletypeset
[
columns={hMax,NCells,NIFaces,Skew}, 
columns/hMax/.style={column name=\(h\)},
columns/NCells/.style={column name=\(\CARD {\Th}\)},
columns/NIFaces/.style={column name=\(\CARD {\Fh^i}\)},
columns/Skew/.style={column name=\(\varrho\)},
every head row/.style={before row=\toprule,after row=\midrule},
every last row/.style={after row=\bottomrule} 
]\loadedtable
\caption{Parameters of Voronoi meshes.}
\label{table:Voro1}
\end{table}

\begin{figure}[H]
	\centering
	\ref{Voro1:fig1}
	\vspace{0.5cm}\\
	\subcaptionbox{$k=0$}
	{
		\begin{tikzpicture}[scale=0.57]
		\begin{loglogaxis}[ legend columns=4, legend to name=Voro1:fig1 ]
		\addplot table[x=hMax,y=VelocityEnergyError] {data/Voro1_k0_v01.dat};
		\addplot table[x=hMax,y=MagneticEnergyError] {data/Voro1_k0_v01.dat};
		\addplot table[x=hMax,y=PressureEnergyError] {data/Voro1_k0_v01.dat};
		\addplot table[x=hMax,y=FluidPressureError] {data/Voro1_k0_v01_with_fluid_pressure.dat};
		\legend{$E_{\a,\bmu,h}$,$E_{\a,\bmb,h}$,$E_{q,h}$,$E_{p,h}$};
		\logLogSlopeTriangle{0.90}{0.4}{0.1}{1}{black};
		\end{loglogaxis}
		\end{tikzpicture}
	}
	\subcaptionbox{$k=1$}
	{
		\begin{tikzpicture}[scale=0.57]
		\begin{loglogaxis}
		\addplot table[x=hMax,y=VelocityEnergyError] {data/Voro1_k1_v01.dat};
		\addplot table[x=hMax,y=MagneticEnergyError] {data/Voro1_k1_v01.dat};
		\addplot table[x=hMax,y=PressureEnergyError] {data/Voro1_k1_v01.dat};
		\addplot table[x=hMax,y=FluidPressureError] {data/Voro1_k1_v01_with_fluid_pressure.dat};
		\logLogSlopeTriangle{0.90}{0.4}{0.1}{2}{black};
		\end{loglogaxis}
		\end{tikzpicture}
	}
	\subcaptionbox{$k=2$}
	{
		\begin{tikzpicture}[scale=0.57]
		\begin{loglogaxis}
		\addplot table[x=hMax,y=VelocityEnergyError] {data/Voro1_k2_nu01.dat};
		\addplot table[x=hMax,y=MagneticEnergyError] {data/Voro1_k2_nu01.dat};
		\addplot table[x=hMax,y=PressureEnergyError] {data/Voro1_k2_nu01.dat};
		\addplot table[x=hMax,y=FluidPressureError] {data/Voro1_k2_v01_with_fluid_pressure.dat};
		\logLogSlopeTriangle{0.90}{0.4}{0.1}{3}{black};
		\end{loglogaxis}
		\end{tikzpicture}
	}
	\caption{Energy Error vs $h$, Voronoi meshes}
	\label{fig:Voro1.energy}
\end{figure}
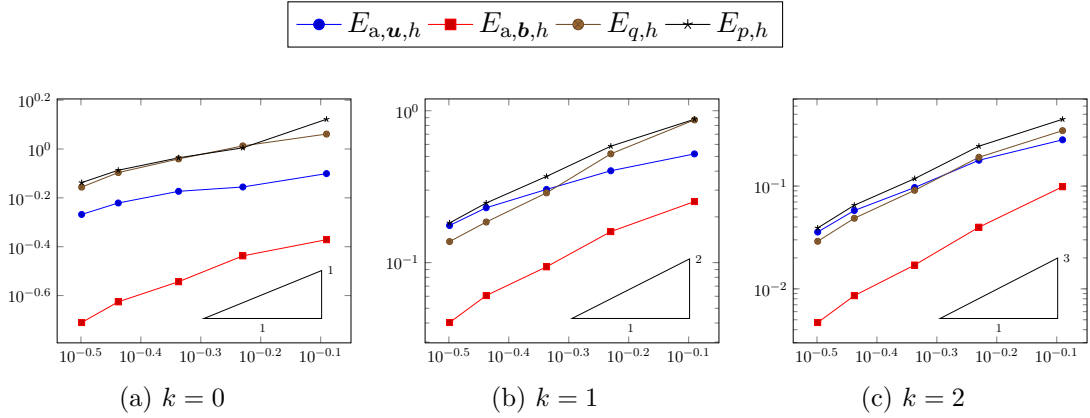

\begin{figure}[H]
	\centering
	\ref{Voro1:fig2}
	\vspace{0.5cm}\\
	\subcaptionbox{$k=0$}
	{
		\begin{tikzpicture}[scale=0.57]
		\begin{loglogaxis}[ legend columns=2, legend to name=Voro1:fig2 ]
		\addplot table[x=hMax,y=VelocityL2Error] {data/Voro1_k0_v01.dat};
		\addplot table[x=hMax,y=MagneticL2Error] {data/Voro1_k0_v01.dat};
		\legend{$E_{0,\bmu,h}$,$E_{0,\bmb,h}$};
		\logLogSlopeTriangle{0.90}{0.4}{0.1}{2}{black};
		\end{loglogaxis}
		\end{tikzpicture}
	}
	\subcaptionbox{$k=1$}
	{
		\begin{tikzpicture}[scale=0.57]
		\begin{loglogaxis}
		\addplot table[x=hMax,y=VelocityL2Error] {data/Voro1_k1_v01.dat};
		\addplot table[x=hMax,y=MagneticL2Error] {data/Voro1_k1_v01.dat};
		\logLogSlopeTriangle{0.90}{0.4}{0.1}{3}{black};
		\end{loglogaxis}
		\end{tikzpicture}
	}
	\subcaptionbox{$k=2$}
	{
		\begin{tikzpicture}[scale=0.57]
		\begin{loglogaxis}
		\addplot table[x=hMax,y=VelocityL2Error] {data/Voro1_k2_nu01.dat};
		\addplot table[x=hMax,y=MagneticL2Error] {data/Voro1_k2_nu01.dat};
		\logLogSlopeTriangle{0.90}{0.4}{0.1}{4}{black};
		\end{loglogaxis}
		\end{tikzpicture}
	}
	\caption{$L^2$ Error vs $h$, Voronoi meshes}
	\label{fig:Voro1.L2}
\end{figure}
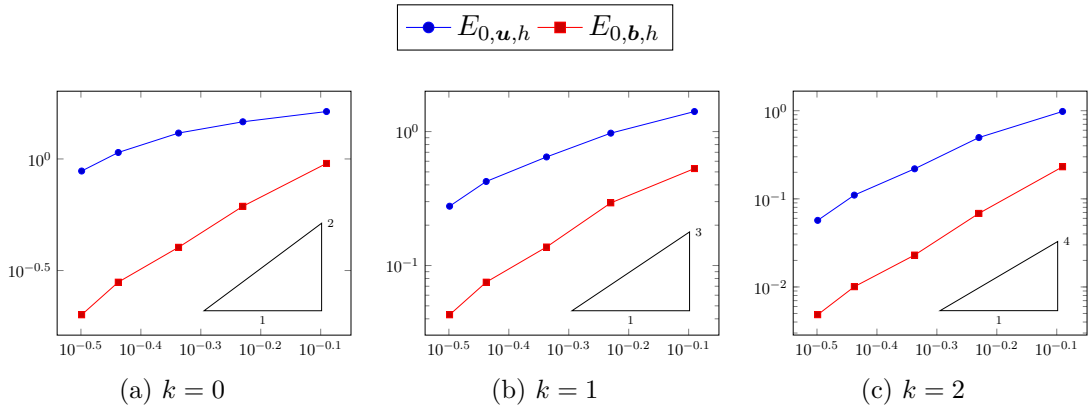

\section{Conclusion}\label{sec:conclusion}

In this paper we have introduced and analysed a novel HHO method for the incompressible MHD equations. We consider a formulation of the problem with the Laplacian as the key differential operator for both the kinetic and magnetic equations. Such an approach allows the design of a scheme where the key spaces are $H^1$, and we show the method to be consistent in any polyhedral domain. The method locally preserves a discrete-divergence free constraint of the kinetic and magnetic unknowns. We prove uniqueness of the discrete solution and optimal approximation rates under a small data assumption. More generally, the existence of a discrete solution and convergence under general data is proven following a compactness approach. The paper is concluded with some numerical tests on polyhedral meshes.


\printbibliography

\end{document}